\newcommand{\cred}{\color{red}}
\newcommand{\cb}{\color{blue}}
\newcommand{\cm}{\color{magenta}}
\newcommand{\cg}{\color{green}}
\newcommand{\cp}{\color{pink}}
\newtheorem{thm}{Theorem}[section]
\newtheorem{prop}[thm]{Proposition}
\newtheorem{lem}[thm]{Lemma}
\newtheorem{lem-def}[thm]{Lemma-Definition}
\newtheorem{cor}[thm]{Corollary}
\newtheorem{thmx}{Theorem}
\theoremstyle{definition}
\newtheorem{ex}[thm]{Example}
\newtheorem{rmk}[thm]{Remark}
\newtheorem{dfn}[thm]{Definition}
\numberwithin{equation}{section}
\newcommand{\nc}{\newcommand}
\nc{\on}{\operatorname}
\nc{\fraka}{{\mathfrak a}} \nc{\bba}{{\mathbf a}}
\nc{\frakb}{{\mathfrak b}}
\nc{\frakc}{{\mathfrak c}}
\nc{\frakd}{{\mathfrak d}}
\nc{\frake}{{\mathfrak e}}
\nc{\frakf}{{\mathfrak f}}
\nc{\frakg}{{\mathfrak g}}
\nc{\frakh}{{\mathfrak h}}
\nc{\fraki}{{\mathfrak i}}
\nc{\frakj}{{\mathfrak j}}
\nc{\frakk}{{\mathfrak k}}
\nc{\frakl}{{\mathfrak l}}
\nc{\frakm}{{\mathfrak m}}
\nc{\frakn}{{\mathfrak n}}
\nc{\frako}{{\mathfrak o}}
\nc{\frakp}{{\mathfrak p}}
\nc{\frakq}{{\mathfrak q}}
\nc{\frakr}{{\mathfrak r}}
\nc{\fraks}{{\mathfrak s}}
\nc{\frakt}{{\mathfrak t}}
\nc{\fraku}{{\mathfrak u}}
\nc{\frakv}{{\mathfrak v}}
\nc{\frakw}{{\mathfrak w}}
\nc{\frakx}{{\mathfrak x}}
\nc{\fraky}{{\mathfrak y}}
\nc{\frakz}{{\mathfrak z}}
\nc{\frakA}{{\mathfrak A}}
\nc{\frakB}{{\mathfrak B}}
\nc{\frakC}{{\mathfrak C}}
\nc{\frakD}{{\mathfrak D}}
\nc{\frakE}{{\mathfrak E}}
\nc{\frakF}{{\mathfrak F}}
\nc{\frakG}{{\mathfrak G}}
\nc{\frakH}{{\mathfrak H}}
\nc{\frakI}{{\mathfrak I}}
\nc{\frakJ}{{\mathfrak J}}
\nc{\frakK}{{\mathfrak K}}
\nc{\frakL}{{\mathfrak L}}
\nc{\frakM}{{\mathfrak M}}
\nc{\frakN}{{\mathfrak N}}
\nc{\frakO}{{\mathfrak O}}
\nc{\frakP}{{\mathfrak P}}
\nc{\frakQ}{{\mathfrak Q}}
\nc{\frakR}{{\mathfrak R}}
\nc{\frakS}{{\mathfrak S}}
\nc{\frakT}{{\mathfrak T}}
\nc{\frakU}{{\mathfrak U}}
\nc{\frakV}{{\mathfrak V}}
\nc{\frakW}{{\mathfrak W}}
\nc{\frakX}{{\mathfrak X}}
\nc{\frakY}{{\mathfrak Y}}
\nc{\frakZ}{{\mathfrak Z}}
\nc{\bbA}{{\mathbb A}}
\nc{\bbB}{{\mathbb B}}
\nc{\bbC}{{\mathbb C}}
\nc{\bbD}{{\mathbb D}}
\nc{\bbE}{{\mathbb E}}
\nc{\bbF}{{\mathbb F}} \nc{\bbf}{{\mathbf f}}
\nc{\bbG}{{\mathbb G}}
\nc{\bbH}{{\mathbb H}}
\nc{\bbI}{{\mathbb I}}
\nc{\bbJ}{{\mathbb J}}
\nc{\bbK}{{\mathbb K}}
\nc{\bbL}{{\mathbb L}}
\nc{\bbM}{{\mathbb M}}
\nc{\bbN}{{\mathbb N}}
\nc{\bbO}{{\mathbb O}}
\nc{\bbP}{{\mathbb P}}
\nc{\bbQ}{{\mathbb Q}}
\nc{\bbR}{{\mathbb R}}
\nc{\bbS}{{\mathbb S}}
\nc{\bbT}{{\mathbb T}}
\nc{\bbU}{{\mathbb U}}
\nc{\bbV}{{\mathbb V}}
\nc{\bbW}{{\mathbb W}}
\nc{\bbX}{{\mathbb X}}
\nc{\bbY}{{\mathbb Y}}
\nc{\bbZ}{{\mathbb Z}}
\nc{\calA}{{\mathcal A}}
\nc{\calB}{{\mathcal B}}
\nc{\calC}{{\mathcal C}}
\nc{\calD}{{\mathcal D}}
\nc{\calE}{{\mathcal E}}
\nc{\calF}{{\mathcal F}}
\nc{\calG}{{\mathcal G}}
\nc{\calH}{{\mathcal H}}
\nc{\calI}{{\mathcal I}}
\nc{\calJ}{{\mathcal J}}
\nc{\calK}{{\mathcal K}}
\nc{\calL}{{\mathcal L}}
\nc{\calM}{{\mathcal M}}
\nc{\calN}{{\mathcal N}}
\nc{\calO}{{\mathcal O}}
\nc{\calP}{{\mathcal P}}
\nc{\calQ}{{\mathcal Q}}
\nc{\calR}{{\mathcal R}}
\nc{\calS}{{\mathcal S}}
\nc{\calT}{{\mathcal T}}
\nc{\calU}{{\mathcal U}}
\nc{\calV}{{\mathcal V}}
\nc{\calW}{{\mathcal W}}
\nc{\calX}{{\mathcal X}}
\nc{\calY}{{\mathcal Y}}
\nc{\calZ}{{\mathcal Z}}
\nc{\scrA}{{\mathscr A}}
\nc{\scrB}{{\mathscr B}}
\nc{\scrR}{{\mathscr R}}
\nc{\bnu}{{\bar{ \nu}}}
\nc{\olO}{\bar{\calO}}
\nc{\al}{{\alpha}} 
\nc{\be}{{\beta}}
\nc{\ga}{{\gamma}} \nc{\Ga}{{\Gamma}}
 \nc{\hGa}{\hat{\Gamma}}
\nc{\ve}{{\varepsilon}} 
\nc{\la}{{\lambda}} \nc{\La}{{\Lambda}}
\nc{\om}{\omega} \nc{\Om}{\Omega} 
\nc{\sig}{{\sigma}} \nc{\Sig}{{\Sigma}}
\nc{\tnb}{\psi_{\rm tame}}
\nc{\oM}{\overline{{M}}}
\nc{\op}{{\on{op}}}
\nc{\ad}{{\on{ad}}}
\nc{\alg}{{\on{alg}}}
\nc{\Ad}{{\on{Ad}}}
\nc{\Adm}{{\on{Adm}}} \nc{\aff}{{\on{af}}}
\nc{\Aut}{{\on{Aut}}}
\nc{\Bun}{{\on{Bun}}}
\nc{\cha}{{\on{char}}}
\nc{\der}{{\on{der}}}
\nc{\Der}{{\on{Der}}}
\nc{\diag}{{\on{diag}}}
\nc{\End}{{\on{End}}}
\nc{\Fl}{{\calF\!\ell}}
\nc{\Tr}{{\on{Transp}}}
\nc{\TR}{{\calT\!\calR}}
\nc{\Gal}{{\on{Gal}}}
\nc{\Gr}{{\on{Gr}}}
\nc{\rH}{{\on{H}}}
\nc{\Hom}{{\on{Hom}}}
\nc{\IC}{{\on{IC}}}
\nc{\id}{{\on{id}}}
\nc{\Id}{{\on{Id}}}
\nc{\ind}{{\on{ind}}}
\nc{\Ind}{{\on{Ind}}}
\nc{\Lie}{{\on{Lie}}}
\nc{\Pic}{{\on{Pic}}}
\nc{\pr}{{\on{pr}}}
\nc{\Res}{{\on{Res}}}
\nc{\res}{{\on{res}}} \nc{\Sat}{{\on{Sat}}}
\nc{\s}{{\on{sc}}}
\nc{\drv}{{\on{der}}}
\nc{\sgn}{{\on{sgn}}}
\nc{\Spec}{{\on{Spec}}}\nc{\Spf}{\on{Spf}} 
\nc{\Sph}{\on{Sph}}
\nc{\St}{{\on{St}}}
\nc{\tr}{{\on{tr}}}
\nc{\Mod}{{\mathrm{-Mod}}}
\nc{\Hilb}{{\on{Hilb}}} 
\nc{\Ext}{{\on{Ext}}} 
\nc{\vs}{{\on{Vec}}}
\nc{\ev}{{\on{ev}}}
\nc{\nO}{{\breve{\calO}}}
\nc{\tS}{{\tilde{S}}}
\nc{\spe}{{\on{sp}}}
\nc{\loc}{{\on{loc}}}
\nc{\nscrR}{{\mathscr{R}^{\on{nr}}}}
\nc{\GL}{{\on{GL}}}
\nc{\U}{{\on{U}}}
\nc{\Gl}{\on{Gl}} 
\nc{\GSp}{{\on{GSp}}}
\nc{\gl}{{\frakg\frakl}}
\nc{\SL}{{\on{SL}}} 
\nc{\SU}{{\on{SU}}} 
\nc{\SO}{{\on{SO}}}
\nc{\PGL}{{\on{PGL}}}
\nc{\Conv}{{\on{Conv}}}
\nc{\Rep}{{\on{Rep}}}
\nc{\Dom}{{\on{Dom}}}
\nc{\red}{{\on{red}}}
\nc{\act}{{\on{act}}}
\nc{\nr}{{\on{nr}}}
\nc{\ctf}{{\on{ctf}}}
\nc{\str}{{\on{-}}} 
\nc{\os}{{\bar{s}}}
\nc{\oeta}{{\bar{\eta}}}
\nc{\hookto}{\hookrightarrow}
\nc{\longto}{\longrightarrow}
\nc{\leftto}{\leftarrow}
\nc{\onto}{\twoheadrightarrow}
\nc{\lonto}{\twoheadleftarrow}
\nc{\uG}{{\underline{G}}}
\nc{\uA}{{\underline{A}}}
\nc{\uS}{{\underline{S}}}
\nc{\uT}{{\underline{T}}}
\nc{\uM}{{\underline{M}}}
\nc{\uP}{{\underline{P}}}
\nc{\uB}{{\underline{B}}}
\nc{\uN}{{\underline{N}}}
\nc{\ucG}{{\underline{\calG}}}
\nc{\ucA}{{\underline{\calA}}}
\nc{\ucS}{{\underline{\calS}}}
\nc{\ucT}{{\underline{\calT}}}
\nc{\ucM}{{\underline{\calM}}}
\nc{\ucP}{{\underline{\calP}}}
\nc{\ucN}{{\underline{\calN}}}
\nc{\bF}{{\breve{F}}}
\nc{\oFl}{{\overline{\Fl}}} 
\nc{\bU}{{\overline{U}}}
\nc{\tGr}{{\tilde{\Gr}}}
\nc{\cGr}{\calG\! r}
\nc{\oGr}{\overline{\on{Gr}}} 
\nc{\ocGr}{\overline{\calG\! r}}
\nc{\co}{{\colon}}
\nc{\sch}[1]{(Sch/{#1})}
\nc{\HypLoc}[1]{HypLoc({#1})}
\nc{\ohtimes}{\stackrel{!}{\otimes}}
\nc{\boxtilde}{\widetilde{\boxtimes}}
\nc{\vstar}{{\varhexstar}}
\nc{\Div}{\on{Div}}
\nc{\bslash}{\backslash}
\nc{\algQl}{{\bar{\bbQ}_\ell}}
\nc{\sF}{{\bar{F}}}
\nc{\nF}{{\breve{F}}}
\nc{\nW}{{W^{\on{nr}}}}
\nc{\sk}{{\bar{k}}}
\nc{\cont}{\on{c}}
\nc{\Supp}{\on{Supp}}
\nc{\blt}{\bullet}  
\nc{\dom}{\on{dom}}
\nc{\scon}{{\on{sc}}} 
\nc{\Affine}{\on{Aff}} 
\nc{\nscrA}{\mathscr{A}^{\on{nr}}} 
\nc{\nfraka}{{\bbf^{\on{nr}}}}
\nc{\ran}{{\rangle}}
\nc{\lan}{{\langle}}
\nc{\bk}{{\bar{k}}}
\nc{\tF}{{\tilde{F}}}
\nc{\sS}{{\bar{S}}}
\nc{\LG}{{^\text{L}\hspace{-0.04cm}G}}
\nc{\LL}{{^\text{L}\hspace{-0.07cm}L}}
\nc{\pot}[1]{ [\hspace{-0,5mm}[ {#1} ]\hspace{-0,5mm}] }
\nc{\rpot}[1]{ (\hspace{-0,7mm}( {#1} )\hspace{-0,7mm}) }
\nc{\defined}{\hspace{0.1cm}\stackrel{\text{\tiny \rm def}}{=}\hspace{0.1cm}}
\begin{document}

\title[Alcove walk models for Mirkovi\'c-Vilonen intersections and Levi branching]{Alcove walk models for parabolic Mirkovi\'c-Vilonen intersections and branching to Levi subgroups}
\author[T.\,J.\,Haines]{by Thomas J. Haines}

\address{Department of Mathematics, University of Maryland, College Park, MD 20742-4015, DC, USA}
\email{tjh@umd.edu}



\thanks{\iffalse {\it 2010 Mathematics Subject Classification} 14G35(???????).\fi Research of T.H.~partially supported by NSF DMS-2200873. Disclaimer: Any opinions, findings, and conclusions or recommendations expressed in this material are those of the author and do not necessarily reflect the views of the National Science Foundation.}

\maketitle

\begin{abstract}
This article proves an alcove walk description of intersections of Schubert cells and partially semi-infinite orbits (depending on a choice of parabolic subgroup) in the affine Grassmannian of a split connected reductive group (we call these intersections {\em parabolic Mirkovi\'c-Vilonen intersections}).  We deduce from this a parameterization of the irreducible components of the maximal possible dimension in these intersections by the alcove walks of maximal possible dimension. As a consequence we present a new combinatorial description of branching to Levi subgroups of irreducible highest weight representations, and in particular, we give a new algorithm for computing the characters of such representations. 
\smallskip

\noindent \em{Keywords:} 
Loop groups and parahoric groups, affine flag varieties, combinatorial representation theory

\noindent \em{AMS Subject Classification}: 22E67, 20G25, 22E57
\end{abstract}

\setcounter{tocdepth}{1}
\tableofcontents
\setcounter{section}{0}

\thispagestyle{empty}

\section{Introduction}

\subsection{Statement of Main Results}
Let $G$ be a connected reductive algebraic group, defined and split over any field $k$, and fix a $k$-split Borel pair $B \supset T$ and a Levi decomposition $B = TU$.  For any cocharacters $\mu, \lambda \in X_*(T)$, we consider the Mirkovi\'c-Vilonen (reduced) intersection $$LU\,t^\lambda L^+G/L^+G ~\cap~ L^+G\,t^\mu L^+G/L^+G$$ in the $t$-power series affine Grassmannian ${\rm Gr}_G$.  Such varieties play an essential role in the geometric Satake equivalence \cite{MV07, NP01, Ri14}.  More generally, for any parabolic subgroup $P \supset B$ with Levi factor $M \supset T$ and unipotent radical $N \subset U$, we may consider the {\em parabolic Mirkovi\'c-Vilonen intersections} $$L^+M LN\,t^\lambda L^+G/L^+G ~\cap~ L^+G\,t^\mu L^+G/L^+G$$ in ${\rm Gr}_G$.  These intersections, their close relatives in the (sometimes Witt vector) affine flag varieties, and the methods used to study them, have been employed in the study of affine Deligne-Lusztig varieties and special fibers of certain Shimura varieties (e.g.\,\cite{GHKR06, GHKR10, XZ17+, Ni22}), and in geometrizations of and combinatorial and crystal properties of constant term homomorphisms (e.g.,\,\cite{BD, BG01, CP24, HKM}).

The current paper uses retractions in the Bruhat-Tits building and alcove walks  in the base apartment as the primary tools, relying in key places on previous works \cite{HKM, Hai25}. This approach has several precedents.  Retractions themselves are fundamental aspects of the theory of buildings and appeared in foundational works of Bruhat-Tits \cite{BT72}, and alcove walks have long been known to provide a helpful language to visualize words in Coxeter groups (see e.g.\,\cite{HN02}). Ram's definition of the alcove walk algebra in \cite{Ram06} led to advances in the study of affine Hecke algebras (e.g.\,\cite{Goe07}),  yielding integral presentations exploited in the mod $\ell$ and mod $p$ local Langlands programs by Vigneras \cite{Vig16} and others. Concerning affine Deligne-Lusztig varieties, the alcove walk technique first appeared in \cite{GHKR06, GHKR10}, and was further developed in more recent works, e.g.\,\cite{MST19, MST23, MNST24}.

Gaussent and Littelmann introduced in \cite{GL05} a gallery-theoretic analogue of the Littelmann Path model \cite{Li95}, and alcove walks are special cases of the galleries they consider. The Gaussent-Littelmann model in particular has been used in recent works on the (integral) motivic geometric Satake equivalence (\cite{CSvdH22}, \cite{vdH24}). Many further applications of alcove walks and galleries to Hecke algebras, Hall-Littlewood polynomials, and combinatorial representation theory have appeared (for a sample, see \cite{Ram06, PRS09}).

The first main theorem of this paper is the following description of the pieces in a cellular paving of parabolic Mirkovi\'c-Vilonen intersections, indexed by an explicit set $\mathcal P^{{\bf a}_{I_P}}_{\mu}(\lambda)$ of alcove walks (see below for the precise definition). The existence of cellular pavings was proved for objects attached to general parahoric affine flag varieties in \cite{Hai25}, but due to the general setting those pavings are not at all explicit. In this article, we give an essentially self-contained alternative treatment of the present special case of \cite{Hai25}, and in addition we make the result much more explicit, by identifying the index set of the paving with particular alcove walks. The special setting of this article has the advantage of connecting representation theory invariants such as branching multiplicities to this same set of alcove walks. 

Recall that specifying a paving of a scheme $X$ by schemes in a class $\mathcal C$ amounts to specifying a finite filtration by closed subschemes $\emptyset = X_0 \subset X_1 \subset \cdots \subset X_n = X$ such that each locally closed subscheme $X_i - X_{i-1}$ belongs to $\mathcal C$ (for $1 \leq i \leq n$). When describing a paving in this article, we often write it is a union of the locally closed subschemes which appear; we will leave unspecified the way to assemble them into closed subschemes to satisfy the definition of paving, because we are mainly interested in the form and enumeration of the pieces.

\begin{thmx}[Theorem \ref{Thm_A_body}]  \label{Thm_A}  Write $K = L^+G$ and $K_P = L^+M\,LN$. The \textup{(}reduced\textup{)} parabolic Mirkovi\'c-Vilonen intersection has a paving by $k$-schemes of the form
\begin{equation}  \label{Thm_A_eq}
K_P t^\lambda K/K \, \cap \, Kt^\mu K/K \cong \bigsqcup_{{\bf a}_\bullet \in \mathcal P^{{\bf a}_{I_P}}_{\mu}(\lambda)} \mathbb A_k^{c^+({\bf a}_\bullet)} \times (\mathbb A^1_k - \mathbb A^0_k)^{f^+({\bf a}_\bullet)}.
\end{equation} 
\end{thmx}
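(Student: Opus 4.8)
The existence of \emph{some} cellular paving of $K_P t^\lambda K/K \cap Kt^\mu K/K$ is already known from \cite[Cor.\,10.2]{Hai24}; the task is to identify the cells explicitly, and the plan is to do this by tracking a retraction of the Bruhat--Tits building through a Bott--Samelson type resolution. Concretely, I would first choose $\mu$ dominant and a reduced word (equivalently a reduced alcove-walk gallery $\gamma$ in the base apartment) for $t^\mu$, giving a Bott--Samelson variety $\Sigma_\gamma$ --- an iterated $\mathbb P^1$-bundle --- together with the proper birational map $\pi\co \Sigma_\gamma \to \overline{Kt^\mu K/K}$ that restricts to an isomorphism onto the open Schubert cell $Kt^\mu K/K$. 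Following \cite{GL05, Ram06}, the $\bk$-points of $\Sigma_\gamma$ are the alcove walks of type $\gamma$, and $\Sigma_\gamma$ carries a paving by locally closed ``combinatorial gallery'' strata indexed by fold patterns; the strata inside $\pi^{-1}(Kt^\mu K/K)$ are exactly those whose crossing sequence reaches $t^\mu$ itself.

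The second step is to detect membership in the partial semi-infinite orbit $K_P t^\lambda K/K$. Since $K_P = L^+M\,LN$ is semi-infinite only in the $N$-directions and spherical in the $M$-directions, the relevant device is a ``partial retraction'' landing not in the apartment but in $\Gr_M$: it combines the building retraction away from the chamber at infinity attached to $N$ with the Levi/constant-term geometry of \cite{HKM}, its fibres being $LN$-orbits. Composing $\pi$ with this partial retraction sends each surviving stratum of $\Sigma_\gamma$ into a single $K_P$-orbit, and that orbit is recorded by the image endpoint together with the $I_P$-part of the folded walk, i.e.\ the portion running along the walls of $M$; this package is precisely an element ${\bf a}_\bullet$ of $\mathcal P^{{\bf a}_{I_P}}_\mu(\lambda)$, the superscript fixing the $M$-behaviour. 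One checks that the assignment stratum $\mapsto {\bf a}_\bullet$ is a bijection onto $\mathcal P^{{\bf a}_{I_P}}_\mu(\lambda)$ once we restrict to the strata whose image lies in $K_P t^\lambda K/K$.

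It then remains to compute the shape of each such piece. Imposing the two conditions ``$\pi$ is the isomorphism onto $Kt^\mu K/K$'' and ``the partial retraction lands in the given $K_P$-orbit'' splits the Bott--Samelson coordinates along $\gamma$ into three classes: a free crossing imposes nothing and contributes an $\mathbb A^1_k$ factor (these are counted by $c^+({\bf a}_\bullet)$); a positive fold forces its coordinate to be nonzero, since the value $0$ would convert the fold into a crossing and eject the point into a neighbouring orbit, hence contributes an $\mathbb A^1_k - \mathbb A^0_k$ factor (counted by $f^+({\bf a}_\bullet)$); and the remaining coordinates are rigidified to fixed values. Doing this bookkeeping wall-by-wall along $\gamma$, keeping track of the $I_P$-decoration, yields the product $\mathbb A_k^{c^+({\bf a}_\bullet)} \times (\mathbb A^1_k - \mathbb A^0_k)^{f^+({\bf a}_\bullet)}$ for each stratum. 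Finally, the closure of a combinatorial gallery stratum in $\Sigma_\gamma$ is a union of such strata, and transporting this closure relation through $\pi$ and the partial retraction promotes the stratumwise isomorphisms into an honest paving as in the statement.

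The step I expect to be the main obstacle is the analysis of the partial orbit $K_P$. In the two extremes --- $P = G$, where the intersection is just a Schubert cell, and $P = B$, where the classical retraction toward the antidominant chamber suffices --- there is a single retraction to work with, whereas the mixed group $L^+M\,LN$ requires simultaneously controlling the semi-infinite $N$-direction and the spherical $M$-direction, and proving that the resulting partial retraction is regular enough on each Bott--Samelson stratum to be evaluated coordinate by coordinate. Extracting from this the intrinsic combinatorial description of the index set $\mathcal P^{{\bf a}_{I_P}}_\mu(\lambda)$ and of the statistics $c^+$ and $f^+$ --- and upgrading the $\bk$-point bijection to a genuine isomorphism of $k$-schemes compatible with the closure filtration --- is where the bulk of the work, and the reliance on \cite{HKM, Hai24}, will be concentrated.
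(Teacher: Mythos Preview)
Your proposal diverges from the paper's argument in a way that would not produce the stated index set. The key point you miss is the structure of $\mathcal P^{{\bf a}_{I_P}}_\mu(\lambda)$: by Definition \ref{P_b_mu_dfn} it is a \emph{disjoint union over $w \in W_0/W_{0,-\mu}$} of sets of alcove walks of \emph{varying types} $(t_{-w(\mu)})_{\bf 0}$, and these types have different lengths $\ell(t_{-\mu}) - \ell(w)$ (Lemma \ref{type_elements}). A single Bott--Samelson $\Sigma_\gamma$ attached to one reduced word for $t^\mu$ has combinatorial strata all of the same length $\ell(t_{-\mu})$, so the bijection you envisage between those strata and $\mathcal P^{{\bf a}_{I_P}}_\mu(\lambda)$ cannot exist as described. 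What you sketch is essentially the Gaussent--Littelmann model \cite{GL05}, which the paper explicitly contrasts with its own approach: ``the price we pay is that we have to include all the different types $(t_{-w(\mu)})_{\bf 0}$.''

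The paper's route is rather different and avoids your ``partial retraction to ${\rm Gr}_M$'' entirely. It first replaces $K_P$ by the conjugate $t^{-\nu}Kt^\nu$ (Proposition \ref{HKM_prop}), then decomposes \emph{both} $K$-double cosets into Iwahori cells via the Bruhat decomposition, passes from ${\rm Gr}_G$ to ${\rm Fl}_G$ using \cite[Lem.\,7.2]{Hai24}, and translates by $t^{-\nu}$ so that $t^{-\nu}It^\nu$ becomes $I_P$. After eliminating non-contributing indices by retraction-distance arguments, one arrives at Lemma \ref{key_P-MV_lem}:
\[
K_Pt^\lambda K/K \,\cap\, Kt^\mu K/K \;\cong\; \bigsqcup_{w,w'} I_P\, t_{-\lambda_w} w'\, I/I \,\cap\, I\, (t_{-w(\mu)})_{\bf 0}\, I/I,
\]
and it is this decomposition over $w$ that generates the multiple types in the index set. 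Each summand is then handled by Lemma \ref{base_case_decomp}, which uses not a partial retraction but the ordinary building retraction $\rho_{{\bf a}_{I_P}, \mathcal A}$ onto the apartment with respect to a single alcove ${\bf a}_{I_P}$ deep in the $P$-direction; Lemma \ref{I_P_vs_retraction} identifies $I_P$-orbits with its fibres, and the cell computation is then the standard wall-by-wall analysis from \cite{BT72, GHKR06}. This is considerably simpler than the mixed semi-infinite/spherical ``partial retraction'' analysis you anticipate as the main obstacle.
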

Here $\mu  \in X_*(T)^+$ is dominant for $G$ and $\lambda \in X_*(T)^{+_M}$ is dominant for $M$ (see $\S$\ref{alcove_walks_sec},\ref{alcove_walk_models_sec} for the remaining notation). An important feature is the precise set of alcove walks ${\bf a}_\bullet$ that appear:
$$
\mathcal P^{{\bf a}_{I_P}}_\mu(\lambda) =  \coprod_{w \in W_0/W_{0,-\mu}} \coprod_{w'' \in W_{M,0}/W_{M,0,-\lambda}} \mathcal P^{{\bf a}_{I_P}}_{(t_{-w(\mu)})_{\bf 0}}(-w''(\lambda)).
$$

Here $W_0$ is the finite Weyl group for $G$, and $(t_{-w(\mu)})_{\bf 0}$ is the unique right $W_0$-minimal element in the coset $t_{-w(\mu)}W_0$ in the extended affine Weyl group. The set $\mathcal P^{{\bf a}_{I_P}}_{(t_{-w(\mu)})_{\bf 0}}(-w''(\lambda))$ consists of the alcove walks in the base apartment $\mathcal A_T$ which are {\em positively-folded} with respect to any alcove ${\bf a}_{I_P}$ ``towards infinity''  in  a suitable direction (depending on $P$), which have type $(t_{-w(\mu)})_{\bf 0}$ (choose any reduced expression), and which start at the base alcove ${\bf a}$ and terminate at the vertex $-w''(\lambda) + {\bf 0}$.

In Proposition \ref{K_P_upper_bd_prop} we prove an a priori upper bound $\langle \rho, \mu + \lambda \rangle$ on the dimension of the left hand side of (\ref{Thm_A_eq}), which leads to the same upper bound on the dimension $c^+({\bf a}_\bullet) + f^+({\bf a}_\bullet)$ of each intervening alcove walk ${\bf a}_\bullet$.  Then defining $\mathcal M^{{\bf a}_{I_P}}_\mu(\lambda) \subset \mathcal P^{{\bf a}_{I_P}}_\mu(\lambda)$ as the subset of alcove walks of dimension $\langle \rho, \mu + \lambda \rangle$, we parametrize the irreducible components of maximum possible dimension, and deduce a relation to the branching to Levi subgroups on the side of the Langlands dual groups $\widehat{G} \supset \widehat{M} \supset \widehat{T}$.  This is our second main theorem:

\begin{thmx} [Theorem \ref{main_thm_P}] \label{Thm_B}
Let $\mu \in X_*(T)^+$ and $\lambda \in X_*(T)^{+_M}$. Then: 
\begin{enumerate}
\item[(1)] There is a bijection ${\bf a}_\bullet \mapsto C_{{\bf a}_\bullet}$ 
\begin{equation*} \label{bij_P_1}
\mathcal M^{{\bf a}_{I_P}}_\mu(\lambda) ~~ \overset{\sim}{\longrightarrow} ~~ {\rm Irred}^{\langle \rho, \mu + \lambda \rangle}(K_Pt^\lambda K/K \, \cap \, Kt^\mu K/K).
\end{equation*} 
\item[(2)] The multiplicity $[V^{\widehat{G}}_\mu \, : \, V^{\widehat{M}}_\lambda]$ is the cardinality of the set $\mathcal M^{{\bf a}_{I_P}}_\mu(\lambda)$ of alcove walks in ${\mathcal  P}^{{\bf a}_{I_P}}_\mu(\lambda)$ which have the maximum possible dimension $\langle \rho, \mu + \lambda \rangle$.
\end{enumerate}
\end{thmx}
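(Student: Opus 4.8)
The plan is to derive both assertions from Theorem~\ref{Thm_A}, the a priori dimension bound of Proposition~\ref{K_P_upper_bd_prop}, and the geometric Satake realization of Levi branching; the genuine input is the last of these, and the combinatorics is carried entirely by Theorem~\ref{Thm_A}. For~(1), set $X := K_Pt^\lambda K/K \cap Kt^\mu K/K$ and $d := \langle\rho,\mu+\lambda\rangle$. By Theorem~\ref{Thm_A}, $X$ is a finite disjoint union of locally closed pieces $X_{{\bf a}_\bullet}\cong\mathbb A_k^{c^+({\bf a}_\bullet)}\times(\mathbb A^1_k-\mathbb A^0_k)^{f^+({\bf a}_\bullet)}$ indexed by ${\bf a}_\bullet\in\mathcal P^{{\bf a}_{I_P}}_\mu(\lambda)$, each irreducible of dimension $c^+({\bf a}_\bullet)+f^+({\bf a}_\bullet)\le d$, the inequality being Proposition~\ref{K_P_upper_bd_prop}. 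One then invokes the elementary fact that, in a scheme stratified by irreducible locally closed subschemes all of dimension $\le d$, the $d$-dimensional irreducible components are exactly the Zariski closures of the $d$-dimensional strata, with distinct strata giving distinct closures: a $d$-dimensional $\overline{X_{{\bf a}_\bullet}}$ is contained in no strictly larger irreducible closed subset, since such a subset would have dimension $>d=\dim X$; two $d$-dimensional strata with a common closure would both be open dense in one irreducible set and hence meet; and any $d$-dimensional irreducible component of $X$ meets, hence contains densely, some stratum, which must then be $d$-dimensional with that component as closure. Setting $C_{{\bf a}_\bullet}:=\overline{X_{{\bf a}_\bullet}}$ for ${\bf a}_\bullet\in\mathcal M^{{\bf a}_{I_P}}_\mu(\lambda)$ gives the asserted bijection onto ${\rm Irred}^d(X)$.

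For~(2), recall $[V^{\widehat G}_\mu:V^{\widehat M}_\lambda]=\dim_{\algQl}\Hom_{\widehat M}(V^{\widehat M}_\lambda,\Res^{\widehat G}_{\widehat M}V^{\widehat G}_\mu)$. Under the geometric Satake equivalences for $G$ and $M$, the restriction $\Res^{\widehat G}_{\widehat M}$ is computed by the parabolic constant-term functor attached to $\Gr_M\xleftarrow{q}\Gr_P\xrightarrow{p}\Gr_G$, which on $\IC_\mu$ is $M$-perverse and semisimple (see \cite{BG01}; cf.\ \cite{BD, HKM}); its $\IC^M_\lambda$-isotypic multiplicity is computed by the top compactly-supported cohomology of $p^*\IC_\mu$ along the $q$-fibers over the $M$-Schubert cell $\Gr^\lambda_M$. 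Since $p$ is $L^+M$-equivariant with $p(q^{-1}(\Gr^\lambda_M))=K_Pt^\lambda K/K$, and since $\IC_\mu$ is lisse on the open orbit $Kt^\mu K/K=\Gr^\mu_G$ while the strictly smaller strata of $\overline{\Gr^\mu_G}$ contribute nothing in the top degree for dimension reasons, this unwinds to the parabolic analogue of the Mirkovi\'c-Vilonen multiplicity formula
\[
[V^{\widehat G}_\mu:V^{\widehat M}_\lambda]\;=\;\#\,{\rm Irred}^{d}(X)\;=\;\dim_{\algQl}H^{2d}_c(X,\algQl),
\]
the second equality because $H^{2d}_c$ of a $d$-dimensional $k$-variety is spanned by the fundamental classes of its $d$-dimensional irreducible components; for $P=G$ (i.e.\ $M=T$) the first equality is the classical Mirkovi\'c-Vilonen theorem \cite{MV07, NP01}, and in general it follows from the constant-term formalism (\cite{BG01}; cf.\ \cite{HKM, Hai24}). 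Combined with the bijection of~(1), the displayed cardinality is $\#\mathcal M^{{\bf a}_{I_P}}_\mu(\lambda)$, which proves~(2).

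The main obstacle is not in this deduction but in correctly invoking and matching the ingredients of~(2): one must check that the variety, the normalization $K_P=L^+M\,LN$, the perverse shifts and Tate twists, and the bookkeeping over the connected components $\pi_0(\Gr_M)$ — reflected in the disjoint union over $W_0/W_{0,-\mu}$ in the definition of $\mathcal P^{{\bf a}_{I_P}}_\mu(\lambda)$ — in the cited constant-term/multiplicity statement agree with the data furnished by Theorem~\ref{Thm_A}. The substantive work of the paper lies upstream, in the proof of Theorem~\ref{Thm_A} itself.
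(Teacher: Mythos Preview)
Your argument for part~(1) is correct and is exactly how the paper proceeds: the theorem is stated there as a direct rephrasing of Theorem~\ref{Thm_A_body} together with the dimension bound, and the paper says as much (``it rephrases the above discussion'').

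For part~(2) your approach is valid but differs from the paper's. You re-derive the identity
\[
[V^{\widehat G}_\mu:V^{\widehat M}_\lambda]\;=\;\#\,{\rm Irred}^{\langle\rho,\mu+\lambda\rangle}(K_Pt^\lambda K/K\cap Kt^\mu K/K)
\]
via the geometric Satake equivalence and the parabolic constant-term functor. The paper instead already packages this identity into Proposition~\ref{K_P_upper_bd_prop} itself (note the ``moreover'' clause there), and the proof of that proposition appeals to \cite[Prop.\,5.4.2]{GHKR06} and \cite[Lem.\,9.3]{HKM}, which ultimately rest on the Lusztig--Kato formula; the paper explicitly remarks that this route \emph{does not} rely on geometric Satake. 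So the paper's deduction of~(2) is a one-line combination of part~(1) with the second assertion of Proposition~\ref{K_P_upper_bd_prop}, whereas you cite that proposition only for the dimension bound and then reprove its multiplicity clause by a different (heavier, but standard) method. Both are fine; the paper's route is more self-contained given what has already been established, while yours is perhaps more conceptual but imports the full Satake machinery.

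One slip: in your parenthetical ``for $P=G$ (i.e.\ $M=T$)'' you mean $P=B$; the case $P=G$ gives $M=G$.
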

Specialized to $P = B$, we get in Theorem \ref{main_thm_B} a parameterization of the irreducible components in $LU t^\lambda K/K\, \cap \, Kt^\mu K/K$, and a new algorithm for computing characters of the representations $V^{\widehat{G}}_\mu$.  An example of Theorem \ref{main_thm_B} is worked out in section $\S$\ref{A2_example}; see Figure \ref{fig:fig1} for a picture of the relevant alcove walks. We illustrate Theorem \ref{main_thm_P}  by proving in Proposition \ref{PRV_prop} a result on branching to Levi subgroups, which is analogous to (but easier than) the PRV Conjecture for tensor product multiplicities (comp.\,\cite{Kum90, Mat89}).

In $\S$\ref{variant_sec}, we present in Theorems \ref{Conv_AW_model}  and \ref{Tensor_AW_model} the analogues of Theorems \ref{Thm_A} and \ref{Thm_B} for convolution fibers and for the multiplicities in tensor products of highest weight representations. Finally, in $\S\ref{appendix}$, we give further information about the quantities which intervene in describing alcove walks.

\subsection{Relation to other works in the literature}

The main purpose of this paper is to provide a simple proof of an explicit cellular paving as in Theorem \ref{Thm_A}, in terms of alcove walks alone (comp.\,\cite{GL05}, which uses the more general galleries), {\em in a framework which applies not just to the classical Mirkovi\'c-Vilonen intersections \textup{(}the case $P = B$\textup{)} but equally well to the general parabolic versions thereof}.

The results of this paper are somewhat parallel to those of \cite{GL05}, although there is very little overlap in the methods of proof.  For the special case of $P =B$, Gaussent and Littelmann prove in \cite[Thm\,4 and Cor.\,5]{GL05} versions of our Theorems \ref{Thm_A} and \ref{Thm_B}, in the language of their {\em positively folded galleries}.  The positively-folded alcove walks above are special cases of their positively-folded galleries.  One could say that we are able to express the MV-cycles and cellular pavings solely using alcove walks, but the price we pay is that we have to include all the different types $(t_{-w(\mu)})_{\bf 0}$ for $w \in W_0/W_{0, -\mu}$. It would be of interest to give an explicit bijection between the galleries appearing in \cite{GL05} and the alcove walks appearing in the Theorems \ref{Thm_A} and \ref{Thm_B}. The author will pursue this in a future work.

Theorem \ref{main_thm_B} gives a new combinatorial parametrization of MV cycles, the irreducible components in an MV intersection in the case $P=B$, and Theorem \ref{Thm_B} gives information in the same direction for the cases where $P \neq B$; see Example \ref{branching_counterexample} to see that the geometry in the general situation really is much more subtle.  The MV cycles, of course, play a key role in many developments, first and foremost in the geometric Satake equivalence of Mirkovi\'c-Vilonen \cite{MV07}, and then in more recent extensions of that equivalence to novel situations; see, for example, \cite{RS21}, \cite{FS24}, \cite{BV25}, and the aforementioned \cite{CSvdH22}.

In \cite{Ram06}, there are several results which bear some relations to our theorems.  Namely, \cite[Thm.\,4.2, 4.10]{Ram06} are related, and essentially look like $q$-analogues of Theorem \ref{Thm_B}(2), and \cite[Thm.\,4.9]{Ram06} appears to be a $q$-analogue of Theorem \ref{Tensor_AW_model}(2). It would be interesting to give a detailed comparison linking Ram's results with the results of this article.

Finally, the results of Theorems  \ref{Conv_AW_model} and \ref{Tensor_AW_model} can be seen as purely alcove-walk analogues of the results of Kapovich-Leeb-Millson \cite{KLM08} on Hecke paths in the Bruhat-Tits building and their applications to tensor product multiplicities. It is not yet clear to the author whether the alcove-walk models shed any new light on the generalized saturation conjectures they were concerned with.

 In the same way, Theorem \ref{Thm_B}(2), Theorem \ref{main_thm_B}(2), and Theorem \ref{Tensor_AW_model}(2) can be seen as giving purely alcove-walk versions of some results of Littelmann expressed using the Littelmann Path Model \cite{Li95}.

\medskip

\noindent {\bf Acknowledgments:} I thank Ulrich G\"{o}rtz, João Lourenço, Shrawan Kumar, Elizabeth Milicevi\'{c}, Sian Nie, and Thibaud van den Hove for very helpful comments, suggestions,  and questions about a preliminary version of this article. I owe a special debt to Elizabeth Milicevi\'c for teaching me how to make pictures such as the one in Figure \ref{fig:fig1}. I thank Jeffrey Adams for his crucial help with Example \ref{branching_counterexample}. I am especially grateful to Wenzhuo Wang for discovering an error in an earlier version of this article. I am grateful to the anonymous referees for their helpful comments and suggestions.

\section{Notation}

All $k$-schemes and $k$-ind-schemes in this paper are endowed with their reduced structure, in particular this applies to all intersections, unions, and unions given by pavings or stratifications.

\subsection{Group theory notation}
In the group $G$, we fix a $k$-torus $T$ and $k$-rational Borel subgroup $B = TU$, where $U$ is the unipotent radical of $B$. We also consider standard parabolic subgroups $P = MN \supset B$, where $M \supset T$ is a Levi factor and $N$ is the unipotent radical of $P$. Let $\langle \cdot \,  , \, \cdot \rangle : X^*(T) \times X_*(T) \rightarrow \mathbb Z$ be the canonical perfect pairing.  The ($B$-)positive roots $\Phi^+$ in the set of roots $\Phi(G, T) \subset X^*(T)$ are defined to be those appearing in ${\rm Lie}(B)$. Let $\rho \in X^*(T)$ denote the half-sum of the $B$-positive roots.

Let $F = k\rpot{t}$, the field of Laurent series in the variable $t$, with ring of integers $\mathcal O = k\pot{t}$. In the (enlarged) Bruhat-Tits building $\mathfrak B(G,F)$, we fix the apartment $\mathcal A = \mathcal A_T$ corresponding to $T$, and we choose an origin and identify $\mathcal A \cong X_*(T)  \otimes \mathbb R =: V$. The affine hyperplanes in $\mathcal A$ determine a Coxeter complex in $\mathcal A$ and are the zero-sets of the affine roots $\Phi_{\rm aff} = \Phi \times \mathbb Z$; we write $(\alpha, n) \in \Phi_{\rm aff}$ as $\alpha + n$ and identify it with the affine-linear $\mathbb R$-functional $v \mapsto \alpha(v) + n$ for $v \in V$. We define the {\em dominant Weyl chamber} $\mathcal C := \{ v \in V \, | \, \langle \alpha, v \rangle > 0, \,\,\forall \alpha \in \Phi^+\}$; write $X_*(T)^+ = X_*(T) \cap \overline{\mathcal C}$, where $\overline{\mathcal C}$ is the closure of $\mathcal C$ in $V$. We define the {\em base alcove} ${\bf a}$ to be the unique alcove contained in $\mathcal C$ whose closure contains the origin.  Then the positive affine roots are characterized as 
$$
\Phi^+_{\rm aff} = \{ \alpha + n \, | \, \alpha + n \,\, \mbox{is positive on ${\bf a}$} \}.
$$

\subsection{Weyl group notation and conventions} The sets $\Phi^+$ and $\Phi^+_{\rm aff}$ contain subsets $\Delta$ and $\Delta_{\rm aff}$ of minimally-positive elements, and these give rise to the simple reflections $S$ and the simple affine reflections $S_{\rm aff}$, all acting on $V$.  The affine Weyl group is the Coxeter group $W_{\rm aff} = \langle s \, | \, s \in S_{\rm aff}\rangle$, viewed as a subgroup of  ${\rm Aut}(V)$, the group of affine-linear automorphisms of $V$.  The finite Weyl group is the Coxeter subgroup fixing the origin, $W_0 = \langle s \, | \, s \in S\rangle$.  The extended affine Weyl group is the group $W = X_*(T) \rtimes W_0 \subset {\rm Aut}(V)$, where $\lambda \in X_*(T)$ acts by translation by $\lambda$, denoted $t_\lambda$. The action permutes the affine root hyperplanes and thus the alcoves; let $\Omega = \Omega_{\bf a} \subset W$ denote the stabilizer of ${\bf a}$.  The group $W_{\rm aff}$ acts simply transitively on the set of alcoves. Thus we obtain a decomposition $W = W_{\rm aff} \rtimes \Omega$.  We extend the Bruhat order $\leq$ and the length function $\ell(\cdot)$ on $W_{\rm aff}$ to $W$ using this decomposition: $\ell(w\tau) = \ell(w)$ for $w \in W_{\rm aff}$ and $\tau \in \Omega$.  Morever $w\tau \leq w' \tau'$ if and only $\tau = \tau' $ and $w \leq w'$ in $W_{\rm aff}$. The group $\Omega$ is finitely-generated and abelian, and we consider it as the subgroup of length-zero elements in $W$.

Let $N_G(T)$ denote the normalizer of the torus $T$ in $G$. Given our choice of origin, the Iwahori-Weyl  group $\mathcal W := N_G(T)(F)/T(\mathcal O)$ acts on $V$ in a natural way, but there is a choice of convention here which is important. Given $\lambda \in X_*(T)$, define $t^\lambda := \lambda(t) \in T(F)$ and use the same symbol for its image in $\mathcal W$. The lifts $\dot{w}$ are declared to act on $V$ as do their images $w \in W_0$ (fixing the origin). The action of $t^\lambda$ is given by the Bruhat-Tits convention: {\bf it acts by $t_{-\lambda}$}.  Then we see easily that we get an isomorphism $\mathcal W \cong W$, whereby $t^\lambda \in \mathcal W$ is identified with $t_{-\lambda} \in W$. The resulting action of $N_G(T)(F)$ on $\mathcal  A$ extends naturally to an action of $G(F)$ on the building $\mathfrak B(G, F)$, and this action is fixed once and for all.

For $s = s_{\alpha + n}$, we denote the corresponding affine hyperplane by $H_s = H_{\alpha + n} = \{ v \in V \, | \, \alpha(v) + n = 0\}$.  Each wall of ${\bf a}$ lies in $H_s$, for a unique $s \in S_{\rm aff}$.

 \subsection{Loop group, Iwahori and parahoric subgroup notation}

For any facet ${\bf f} \subset \mathcal A$ let $\mathcal G_{\bf f}$ denote the corresponding Bruhat-Tits parahoric group scheme over $\mathcal O$ with connected geometric fibers; see \cite{HR08} for a characterization of the parahoric subgroup $\mathcal G_{\bf f}(\mathcal O) \subset G(F)$ as the subgroup of the Kottwitz kernel which fixes ${\bf f}$ pointwise. Let ${\bf 0}$ be the unique facet in the closure of ${\bf a}$ containing the origin (we will often call such minimal facets ``vertices''). Regard the split form $G_{\mathcal O}$ as the Bruhat-Tits hyperspecial maximal parahoric group scheme $\mathcal G_{\bf 0}$, and let $\mathcal G_{\bf a}$ denote the Iwahori group scheme for ${\bf a}$.  

We use the standard loop group functors on $k$-Algebras: $LG(R) = G(R\rpot{t})$, and $L^+\mathcal G_{\bf f}(R) = \mathcal G_{\bf f}(R\pot{t})$. We abbreviate by setting $K = L^+G = L^+\mathcal G_{\bf 0}$, and $I = L^+\mathcal G_{\bf a}$.

\subsection{Union, paving, and stratification notation}

Throughout the article, an equality such as $\sqcup_i Z_i = Z$ designates a finite (or possibly countable) paving (or sometimes even a stratification) of a scheme or ind-scheme $Z$ into disjoint locally closed subschemes $Z_i$ (the underlying sets are disjoint but there could be closure relations between them -- they are not necessarily coproducts in the category of topological spaces). The symbol $\coprod$ denotes the disjoint union of sets.

\subsection{Affine flag variety notation}

We work with the affine Grassmannian 
$${\rm Gr}_G = LG/L^+G = LG/K$$ 
(the \'etale sheafification of the presheaf $R \mapsto LG(R)/L^+G(R)$), which is representable by an ind-projective ind-scheme over $k$. Similarly we have the affine flag variety ${\rm Fl}_G = (LG/L^+{\mathcal G}_{\bf a})^{\mbox{\small \rm \'{e}t}} = (LG/I)^{\mbox{\small \rm \'{e}t}}$.  

Fix a standard parabolic subgroup $P = MN$. Taking intersections in $LG$, we have $LM \cap I= L^+\mathcal M_{{\bf a}_M}$, where $\mathcal M_{{\bf a}_M}$ is the unique Iwahori group scheme attached to $M$ and the unique alcove ${\bf a}_M \in \mathfrak B(M, F)$ whose image in $\mathfrak B(G,F)$ contains ${\bf a}$ (see \cite[Lem.\,2.9.1]{Hai09}).

We define group ind-schemes $K_P = L^+M \, LN = K_M LN$, and $I_P = L^+\mathcal M_{{\bf a}_M} LN = I_M LN$ which act on the left on ${\rm Gr}_G$ and on ${\rm Fl}_G$.  We have {\em stratifications} (the Cartan and Bruhat-Tits decompositions) 

\begin{equation}
LG = \bigsqcup_{\mu \in X_*(T)^+} K t^\mu K \hspace{.25in}, \hspace{.25in} LG = \bigsqcup_{w \in W} Iw I = \bigsqcup_{\nu \in X_*(T)} I t^\nu K  \hspace{.25in} , \hspace{.25in} K = \bigsqcup_{w \in W_0} IwI
\end{equation}
and the corresponding stratifications in terms of orbits of ${\rm Gr}_G$ and ${\rm Fl}_G$. We also have the usual BN-pair relations for $(I, W, S)$ and from these we deduce the following closure relations in ${\rm Fl}_G$:
$$
\overline{I w I/I} = \bigsqcup_{v \leq w} IvI/I.
$$
\begin{rmk} \label{minus_rmk}
When we wish to emphasize the role played by the Bruhat order and closure relations, we often write $It_{-\lambda}I/I$ (resp.\,$Kt_{-\lambda}K/K$) in place of $It^\lambda I/I$ (resp.\,$Kt^\lambda K/K$), etc. It is important for the closure relations that we use the conventions we established above.  For example, if $\lambda \in X_*(T)^+$, then for a simple reflection $s_\alpha \in S$ ($\alpha \in \Delta$)  we have the BN-pair relation $I s_\alpha I t^\lambda I = Is_\alpha I t_{-\lambda}I \subseteq I t_{-\lambda} I \sqcup I s_\alpha t_{-\lambda} I =  I t^{\lambda} I \sqcup I s_\alpha t^{\lambda} I$.
\end{rmk}

\section{An upper bound on dimensions}

In this section, fix  $\mu \in X_*(T)^+$ and $\lambda \in X_*(T)^{+_M}$ (the latter being the cocharacters dominant with respect to the positive roots in $\Phi(M, T)$). Let $\widehat{G} \supset \widehat{M} \supset \widehat{T}$ be the Langlands duals of $G \supset M \supset  T$ over $\mathbb C$.  Let $V^{\widehat{G}}_\mu$ be the irreducible representation of $\widehat{G}$ with highest weight $\mu \in X^*(\widehat{T})^+ = X_*(T)^+$, and let $[V^{\widehat{G}}_\mu : V^{\widehat{M}}_\lambda]$ denote the multiplicity of $V^{\widehat{M}}_\lambda$ in the restriction of $V^{\widehat{G}}_\mu$ to $\widehat{M}$. 

\begin{dfn} For a finite-type reduced $k$-scheme  $X$, let ${\rm Irred}(X)$ denote the finite set of irreducible components of $X$, and for any $d \in \mathbb Z_{\geq 0}$ let ${\rm Irred}^{d}(X)$ denote the subset of $d$-dimensional irreducible components of $X$.
\end{dfn}

\begin{prop} \label{K_P_upper_bd_prop}
In the above situation, we have the inequality
\begin{equation} \label{K_P_inequality_eq}
{\rm dim}\big(K_P t^\lambda K/K \, \cap \, K t^{\mu} K/K\big) ~ \leq ~ \langle \rho, \mu + \lambda \rangle
\end{equation}
and moreover $\#{\rm Irred}^{\langle \rho, \mu + \lambda \rangle}(K_P t^\lambda K/K \, \cap \, K t^{\mu} K/K) = [V^{\widehat{G}}_\mu :V^{\widehat{M}}_\lambda]$.
\end{prop}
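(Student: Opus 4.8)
The plan is to reduce the statement to the known case $P = G$ (the classical Mirković--Vilonen story) together with a fibration argument over the partial affine flag variety for $P$. First I would recall the geometric Satake picture: for $\widehat G$, the multiplicity $[V^{\widehat G}_\mu : V^{\widehat T}_\nu]$ equals the number of $\langle\rho,\mu+\nu\rangle$-dimensional MV-components of $LU\,t^\nu K/K \cap Kt^\mu K/K$, and $\dim(LU\,t^\nu K/K \cap Kt^\mu K/K) \le \langle\rho,\mu+\nu\rangle$ with top-dimensional components the MV-cycles. Applying the same statement inside $M$: $[V^{\widehat M}_\lambda : V^{\widehat T}_\nu]$ counts the $\langle\rho_M,\lambda+\nu\rangle$-dimensional components of $(LU\cap LM)\,t^\nu K_M/K_M \cap K_M t^\lambda K_M/K_M$ in ${\rm Gr}_M$, where $\nu$ runs over $W_{M,0}$-conjugates landing appropriately. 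The branching multiplicity then decomposes as $[V^{\widehat G}_\mu : V^{\widehat M}_\lambda] = \sum_\nu (\dim V^{\widehat T}_\nu\text{-part of }V^{\widehat M}_\lambda)\cdot[\cdots]$, but more usefully one writes the weight multiplicity of $\mu$ as a weight of $\widehat G$ summed over $M$-weights, i.e. the restriction of characters, which matches the decomposition of $K_P t^\lambda K/K$ under the $L^+M$-action.

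The key geometric input is the projection $\pi : K_P t^\lambda K/K \to {\rm Gr}_M$ induced by $K_P = K_M\,LN \to K_M$ (equivalently, the retraction towards $N$), whose image is the single $K_M$-orbit $K_M t^\lambda K_M/K_M \subset {\rm Gr}_M$ — this is where $\lambda \in X_*(T)^{+_M}$ is used — and whose fibers are affine spaces coming from the $LN$-direction, of dimension $\langle\rho_N, \lambda\rangle$-type quantities (here $\rho_N$ is the half-sum of roots in $\Phi(N,T)$, and $\langle 2\rho_N,\lambda\rangle$ is constant on the $K_M$-orbit). Intersecting with $Kt^\mu K/K$ and pushing to ${\rm Gr}_M$, a point of ${\rm Gr}_M$ in the image lies in some $K_M$-orbit-cell, and the fiber of $K_P t^\lambda K/K \cap Kt^\mu K/K$ over it is (an open piece of) the fiber of $\pi$ intersected with the $\mu$-constraint. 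The cleanest way to run this is to use the semi-infinite (Iwasawa) decomposition ${\rm Gr}_G = \bigsqcup_\nu LU t^\nu K/K$ refined by $M$: $K_P t^\lambda K/K = \bigsqcup_{\nu} (K_M t^\lambda K_M/K_M)_\nu \times (\text{affine space})$, where the first factor runs over the MV-type pieces for $M$ and the affine space has dimension $\langle\rho_N,\lambda+\nu\rangle$ suitably interpreted; then $\dim = \langle\rho_M,\lambda+\nu\rangle + \langle\rho_N, \ldots\rangle$ adds up, over the contributing $\nu$, to exactly $\langle\rho,\mu+\lambda\rangle$ on the top-dimensional pieces, while all other pieces are strictly smaller. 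Counting the top pieces gives $\sum_\nu [V^{\widehat M}_\lambda : V^{\widehat T}_\nu]\cdot \#{\rm Irred}^{\mathrm{top}}(LU t^\nu K/K \cap Kt^\mu K/K)$, which is $\sum_\nu [V^{\widehat M}_\lambda : V^{\widehat T}_\nu]\cdot\dim(V^{\widehat G}_\mu)_\nu$, and by restriction of characters this equals $[V^{\widehat G}_\mu : V^{\widehat M}_\lambda]$.

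Concretely the steps are: (1) set up $\pi$ and identify its image with a single $K_M$-orbit, using $\lambda$ being $M$-dominant; (2) describe the fibers of $\pi$ as affine spaces via the $LN$-action and compute their dimension on each $K_M$-orbit stratum; (3) intersect with $Kt^\mu K/K$ and use the $P = G$ case (geometric Satake / MV-cycles) both for $G$ and for $M$ to get the dimension bound additively and to identify the top-dimensional components fiberwise; (4) assemble the count and invoke the branching rule $[V^{\widehat G}_\mu : V^{\widehat M}_\lambda] = \sum_\nu \dim(V^{\widehat G}_\mu)_\nu \cdot [V^{\widehat M}_\lambda : V^{\widehat T}_\nu]$ (restriction of characters, Weyl-integration or just weight-counting). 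The main obstacle I expect is step (3): controlling how the $K t^\mu K/K$ condition cuts the $\pi$-fibers — one needs that over the generic point of each relevant $K_M$-stratum the intersection with the $\mu$-constraint is a product of the MV-piece for $M$ with an affine space of the predicted dimension, with no dimension drop and no spurious top-dimensional components arising from the boundary strata. This is exactly the place where \cite{Hai24} (existence of cellular pavings, \cite[Cor.\,10.2]{Hai24}) and the retraction techniques of \cite{HKM} do the heavy lifting, and where Theorem~\ref{Thm_A} will eventually make everything explicit; for the a priori statement here one only needs the dimension-additivity and the fiberwise identification of top components, which follows from the corresponding facts in ${\rm Gr}_M$ and the affineness of the $LN$-fibers.
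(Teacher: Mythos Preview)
Your approach is genuinely different from the paper's. The paper's proof is very short: for $k$ a finite field or its algebraic closure it simply cites \cite[Prop.\,5.4.2]{GHKR06} together with \cite[Lem.\,9.3]{HKM}, which ultimately rest on the Lusztig--Kato formula (a Hecke-algebra identity), and then for a general field $k$ it runs a spreading-out argument over the Witt vectors $W(k)$ to transport the result from finite fields to characteristic zero. The paper explicitly notes that this route \emph{avoids} the geometric Satake equivalence.

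Your proposal, by contrast, is the constant-term-functor approach: use the diagram ${\rm Gr}_M \leftarrow {\rm Gr}_P \rightarrow {\rm Gr}_G$ together with the classical MV-cycle story for $B \subset G$ and for $B_M \subset M$, and assemble the branching multiplicity from weight multiplicities. (You wrote ``the known case $P=G$'' for the base case, but from context you clearly mean $P=B$.) This is a legitimate and well-trodden route---it is essentially how \cite{BD, BG01, MV07} treat restriction to Levi subgroups---and it has the virtue of being directly geometric and close to the representation theory. The trade-off is that it invokes the geometric Satake machinery, whereas the paper's argument stays on the Hecke-algebra side and needs only point-counting over finite fields plus flatness.

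One genuine imprecision worth flagging: the fibers of your map $\pi: K_P t^\lambda K/K \to K_M t^\lambda K_M/K_M$ are translates of $LN$-orbits $LN\, t^\lambda K/K$, which are \emph{infinite}-dimensional ind-schemes, not affine spaces of any dimension like $\langle \rho_N, \lambda\rangle$. Finite-dimensionality and the correct dimension count only appear \emph{after} intersecting with $Kt^\mu K/K$; the dimension formula you want then comes from the relative-dimension computations for semi-infinite strata restricted to Schubert varieties (this is where hyperbolic localization, or the explicit arguments in \cite{MV07, NP01}, enter). You correctly anticipate that step~(3) is where the real work lies, but the issue starts already at step~(2): there is no finite-dimensional $\pi$-fiber to speak of until the $\mu$-constraint is imposed, so the additivity of dimensions has to be argued more carefully than your sketch suggests.
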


\begin{proof}
This result is well known.  For example, it follows from \cite[Prop.\,5.3.29]{BD} and from  \cite[Thm.\,2.2]{BG01}.  In both of those sources, the argument relies on the geometric Satake isomorphism of Mirkovi\'c-Vilonen \cite{MV07} (and technically those sources assume the field $k$ is $\mathbb C$, or more generally a characteristic zero field). Our purpose here is merely to point out that this result, including the statement about branching multiplicities, is more elementary and does not in fact require the geometric Satake isomorphism, but only its precursor, the Lusztig-Kato formula.

When $k$ is a finite field $\mathbb F_q$ or its algebraic closure, the proposition is proved by combining \cite[Prop.\,5.4.2]{GHKR06} with \cite[Lem\,9.3]{HKM}.  Ultimately this relies on the Lusztig-Kato formula, see \cite{Lu81,kato} and \cite{HKP}.

Suppose $q$ is a power of a prime $p$. If $k$ is such a field and $W(k)$ is its ring of $p$-typical Witt vectors with fraction field $K(k)$, then we may define $W(k)$-integral versions $\underline{G}, \underline{B},\underline{T},\underline{U}, \underline{W}, \underline{\bf f}, \underline{\mathcal G}_{\underline{\bf f}}, \underline{K}, \underline{I}$ of $G,B,T,U, W,\mathcal G_{\bf f}, K,I$ and thus we can consider the $\underline{P}$-versions of the Mirkovi\'c-Vilonen intersections in $L\underline{G}/L^+\underline{\mathcal G}_{\underline{\bf f}}$ (see \cite[$\S$6]{HaRi20}).  The formation of Mirkovi\'c-Vilonen intersections commutes with base change along $W(k) \rightarrow k$, so that in particular
$$
\Big(\underline{K}_{\underline{P}} t^\lambda \underline{K}/\underline{K} \, \cap\, \underline{K} t^\mu \underline{K}/\underline{K}\Big) ~ \otimes_{W(k)} ~ k ~~ \cong ~~ K_P t^{\lambda} K/K \, \cap K t^\mu K/K
$$
where the right hand side designates the objects over $k$.  There is a similar identity for base-changing to the fraction field $K(k)$ of $W(k)$.  By flatness of all objects over $W(k)$, the dimension estimate and description of top-dimensional irreducible components over $k$ implies the corresponding results over $K(k)$.  Now since $K(k)$ is characteristic zero, a standard argument reduces any characteristic zero field of coefficients to $K(k)$.
\end{proof}

\section{Alcove walks} \label{alcove_walks_sec}

\subsection{Alcove walks and their labels}

Let ${\bf a}$ denote the base alcove.  For any alcove ${\bf b} \subset \mathcal A_T$, there is a unique element $w_{\bf b} \in W_{\rm aff}$ such that ${\bf b} = w_{\bf b} ({\bf a})$. Let $s$ denote a simple affine reflection, that is, a reflection through one of the walls of ${\bf a}$. If ${\bf b}$ and ${\bf c}$ share a face of type $s$, then the crossing from ${\bf b}$ to ${\bf c}$ over the type $s$-hyperplane $w_{\bf b}H_s$, denoted ${\bf b} \overset{s}{\rightarrow} {\bf c}$, is achieved by applying the reflection $w_{\bf b} s w_{\bf b}^{-1}$ to ${\bf b}$.

\begin{dfn} Let ${\bf b}$, ${\bf c}$, ${\bf d}$ be three alcoves in the apartment ${\mathcal A}_T$ and suppose ${\bf c}$ and ${\bf d}$ are distinct and share a codimension 1 face of type $s \in S_{\rm aff}$.  We say that the crossing ${\bf c} \overset{s}{\rightarrow} {\bf d}$ is {\em in the ${\bf b}$-positive direction} if ${\bf b}$ and ${\bf c}$ are on the same side of the affine hyperplane $w_{\bf c}H_s$. Similarly, we say the crossing is {\em in the ${\bf b}$-negative direction} if ${\bf b}$ and ${\bf d}$ are on the same side of $ w_{\bf c}H_s$.
\end{dfn}

\begin{dfn} Let ${\bf b}$ denote any fixed alcove in the apartment $\mathcal A_T$, and fix a word (not necessarily reduced) $s_\bullet = s_1 s_2 \cdots s_r \tau \in W$ with $\tau \in \Omega$.
We say ${\bf a}_0, {\bf a}_1, \dots, {\bf a}_r$ is a {\em ${\bf b}$-positively folded alcove walk of type $s_\bullet$} if for all $1 \leq i \leq r$, the following conditions hold, writing $w_{i} := w_{{\bf a}_i}$ for all $i$:
\begin{enumerate}
\item[(0)] ${\bf a}_i \in \{ {\bf a}_{i-1}, w_{i-1}s_iw_{i-1}^{-1}{\bf a}_{i-1}\}$
\item[(1)] if ${\bf a}_{i-1} = {\bf a}_i$, the crossing ${\bf a}_{i-1} \overset{s_i}{\rightarrow} w_{i-1}s_iw_{i-1}^{-1}{\bf a}_{i-1}$ is in the ${\bf b}$-negative direction.
\end{enumerate}
If ${\bf a}_{i-1} \neq {\bf a}_i$ we call ${\bf a}_{i-1} \overset{s_i}{\rightarrow} {\bf a}_i$ a {\em crossing} (which may be either in the ${\bf b}$-positive or in the ${\bf b}$-negative direction). If ${\bf a}_{i-1} = {\bf a}_i$, we still denote the process as ${\bf a}_{i-1} \overset{s_i}{\rightarrow} {\bf a}_{i}$ and we call it a {\em folding} ({\em in the ${\bf b}$-positive direction}). 
\end{dfn}

We now introduce an alternate notation for the operation  ${\bf a}_{i-1} \overset{s_i}{\rightarrow} {\bf a}_i$.

\begin{dfn} When ${\bf b}$ is understood, we denote crossings and foldings as follows:
\begin{enumerate}
\item[(i)] If ${\bf a}_{i-1} \overset{s_i}{\rightarrow} {\bf a}_i$ is a crossing in the ${\bf b}$-positive direction, we write it as ${\bf a}_{i-1} \overset{c^+_{s_i}}{\rightarrow} {\bf a}_i$.
\item[(ii)] If ${\bf a}_{i-1} \overset{s_i}{\rightarrow} {\bf a}_i$ is a crossing in the ${\bf b}$-negative direction, we write it as ${\bf a}_{i-1} \overset{c^-_{s_i}}{\rightarrow} {\bf a}_i$.
\item[(iii)] If ${\bf a}_{i-1} \overset{s_i}{\rightarrow} {\bf a}_{i}$ is a folding, we write it as ${\bf a}_{i-1} \overset{f^+_{s_i}}{\rightarrow} {\bf a}_i$ or ${\bf a}_{i-1} \overset{f^+_{s_i}}{\rightarrow} {\bf a}_{i-1}$.
\end{enumerate}
\end{dfn}

\begin{dfn} Assume the alcove ${\bf b}$, which determines the orientations of the crossings and foldings, is understood. A {\em labeled \textup{(}positively-folded\textup{)} alcove walk} is a positively-folded alcove walk ${\bf a}_\bullet = ({\bf a}_0, {\bf a}_1, \dots, {\bf a}_r)$, with each crossing or folding ${\bf a}_{i-1} \overset{s_i}{\rightarrow} {\bf a}_i$ labeled with the symbol $c^+_{s_i}, c^-_{s_i}$, or $f^+_{s_i}$ which describes it.
\end{dfn}

\begin{dfn}
Let ${\bf b}$ be any alcove in the apartment $\mathcal A_T$. For $x, y \in W$ and any reduced word expression $\dot{x} = s_1 \cdots s_r \tau$ for $x$, let $\mathcal P^{{\bf b}}_{\dot{x}}(y)$ denote the set of ${\bf b}$-positively folded labeled alcove walks of type $x_\bullet$ from ${\bf a}$ to $y{\bf a}$.
\end{dfn}
In practice, we often suppress the dot in the notation $\dot{x}$. It is always implicit that both $x$ and $y$ belong to the same coset $W_{\rm aff} \rtimes \tau$.

\subsection{Dimension of a (labeled) alcove walk}

\begin{lem} For a labeled positively-folded alcove walk ${\bf a}_\bullet$ as above, we have $$r= f^+({\bf a}_\bullet) + c^+({\bf a}_\bullet) + c^-({\bf a}_\bullet),$$ 
where
\begin{itemize}
\item $f^+({\bf a}_\bullet) = \#\{ i \, | \, {\bf a}_{i-1} \overset{s_i}{\rightarrow} {\bf a}_i = {\bf a}_{i-1} \,\,\,\,\mbox{is a folding \textup{(}necessarily positive\textup{)}}\}$.
\item $c^+({\bf a}_\bullet) = \#\{ i \, | \, {\bf a}_{i-1} \overset{s_i}{\rightarrow} {\bf a}_i \,\,\,\,\mbox{is a crossing in the ${\bf b}$-positive direction}\}$.
\item $c^-({\bf a}_\bullet) = \#\{ i \, | \, {\bf a}_{i-1} \overset{s_i}{\rightarrow} {\bf a}_i \,\,\,\,\mbox{is a crossing in the ${\bf b}$-negative direction}\}$.
\end{itemize}
\end{lem}

\begin{dfn} We define the dimension of a ${\bf b}$-positively folded labeled alcove walk ${\bf a}_\bullet$ to be
$$
{\rm dim}({\bf a}_\bullet) = c^+({\bf a}_\bullet) + f^+({\bf a}_\bullet).
$$
\end{dfn}

\begin{rmk}
Note that a given alcove walk ${\bf a}_{\bullet}$ in $\mathcal A_T$ may be positively-folded with respect to one choice of ${\bf b}$ while not for another choice ${\bf b}'$.  The set of labels depends on the choice of ${\bf b}$ as well.  However, for any two ${\bf b}, {\bf b'}$ sufficiently deep inside a Weyl chamber, ${\bf a}_\bullet$ will be ${\bf b}$-positively folded if and only if it is ${\bf b}'$-positively folded, and the set of labels, and thus the dimensions, determined by ${\bf b}$ and ${\bf b}'$ will coincide (see Lemma \ref{I_P_vs_retraction} below).
\end{rmk}

\section{Alcove walk models for parabolic Mirkovi\'c-Vilonen intersections} \label{alcove_walk_models_sec}

\subsection{Retractions and $I_P$-orbits}

Fix $P = MN$ as before. We recall $K_P = LN K_M$ and $I_P = LN I_M$.  Here $K_M = LM \cap K$ and $I_M = I \cap LM$.  Let ${\bf a}_M \supseteq {\bf a}$ denote the choice of base alcove for the apartment in $\mathfrak B(M, F)$, lying between the affine hyperplanes $H_{\alpha}$ and $H_{\alpha -1}$ for all the positive roots $\alpha$ appearing in ${\rm Lie}(M)$. Then $I_M$ is the Iwahori group for $(M, {\bf a}_M)$ (\cite[Lem.\,2.9.1]{Hai09}).

\begin{dfn}
We let $\nu \in X_*(T)^+$ be a dominant cocharacter. Consider the properties:
\begin{itemize}
\item $\nu$ is $M$-central, that is, $\langle \alpha, \nu \rangle = 0$ for all $\alpha \in \Phi(M,T)$. 
\item $\nu$ has the property that $\langle \alpha, \nu \rangle >\!>0$ for all roots $\alpha$ appearing in ${\rm Lie}(N)$.
\end{itemize}
When $\nu$ satisfies the first property, we say it is {\em $M$-central}. When $\nu$ satisfies the second property, we say it is {\em sufficiently $N$-dominant}.
\end{dfn}

\begin{rmk}
The notation $\langle \alpha, \nu \rangle >\!> 0$ means that these numbers are {\em large, in a way which can in principle be specified depending on the problem at hand}. The results are always of this form: {\em Given a certain finite problem, a desired property holds if we require all the numbers $\langle \alpha, \nu \rangle$ are at least $m$ \textup{(}where $m \in \mathbb Z_{\geq 0}$  depends on the finite problem\textup{)}}.
\end{rmk}

For each alcove ${\bf b} \subset \mathcal A$, there is a retraction $\rho_{{\bf b}, \mathcal A} : \mathfrak B(G,F) \rightarrow \mathcal A$, which is simplicial and distance-preserving from the alcove ${\bf b}$.  

\begin{lem} \label{I_P_vs_retraction}
Suppose $I_P$ is defined using $P$ and ${\bf a}_M$ as above. For any bounded subset $\mathcal Y \subset \mathfrak B(G,F)$, there exists a bounded subgroup $I_{\mathcal Y} \subset I_P$ with the properties:
\begin{enumerate}
\item[(i)] $\mathcal Y \subset \cup_{g \in I_{\mathcal Y}} g^{-1}\mathcal A$,
\item[(ii)] for all alcoves ${\bf b} = t_{\nu}{\bf a}$ for $\nu$ $M$-central and sufficiently $N$-dominant such that the connected fixer subgroups satisfy $I_{\mathcal Y} \subset I_{\bf b}$, the retractions $\rho_{{\bf b}, \mathcal A}$ all agree on $\mathcal Y$.
\end{enumerate}
Further, defining the retraction $\rho_{I_P, \mathcal A}: \mathfrak B(G, F) \rightarrow \mathcal A$ as the common value of $\rho_{{\bf b}, \mathcal A}$ for ${\bf b}$ defined as in \textup{(}ii\textup{)} on any given bounded subset, we have for every alcove ${\bf x} \subset \mathcal A$,
$$
\rho_{I_P, \mathcal A}^{-1}({\bf x}) = I_P {\bf x}.
$$
\end{lem}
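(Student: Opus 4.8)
The plan is to prove the lemma in the order its three assertions are stated, treating (i) and (ii) as a package and then bootstrapping to the final orbit identity $\rho_{I_P,\mathcal A}^{-1}({\bf x}) = I_P{\bf x}$.

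\medskip

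\textbf{Setup of $I_{\mathcal Y}$ and proof of (i).} First I would note that $I_P = LN\cdot I_M$ is a filtered union of bounded subgroups: writing $LN = \bigcup_{m} L^{\geq -m}N$ for the standard filtration of the loop group of the unipotent radical (the subgroup where each root-coordinate has pole order $\leq m$), the group $I_{\mathcal Y} := L^{\geq -m}N\cdot I_M$ is bounded for each fixed $m$, and every bounded subset of $I_P$ lies in one of these. To arrange (i), recall that the $I_M$-orbit of the base alcove ${\bf a}_M$ in the building $\mathfrak B(M,F)$ sweeps out all of $\mathcal A_T$ as an $M$-apartment (this is the Bruhat--Tits retraction for $M$), so $\mathcal A$ is already reachable from ${\bf a}$ by elements of $I_M\subset I_P$; the role of the $LN$-factor is to move $\mathcal Y$ into a neighbourhood of $\mathcal A$. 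Concretely, any bounded $\mathcal Y$ is contained in a finite union of alcoves, each of which can be brought back to $\mathcal A$ by an element of $I_P$ using that $I_P$ acts transitively on alcoves retracting to a given alcove of $\mathcal A$ under $\rho_{{\bf b},\mathcal A}$ for ${\bf b}$ deep enough — but to keep things bounded one chooses $m$ large enough (depending on $\mathcal Y$) that $I_{\mathcal Y} = L^{\geq -m}N\cdot I_M$ already does the job. I would phrase this as: choose $m$ so large that $\mathcal Y$ lies inside the ``$m$-thickening'' of $\mathcal A$ in the $N$-directions, which is visibly covered by $\bigcup_{g\in I_{\mathcal Y}}g^{-1}\mathcal A$.

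\medskip

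\textbf{Proof of (ii): stabilization of retractions.} The retraction $\rho_{{\bf b},\mathcal A}$ depends only on the combinatorial position of ${\bf b}$ relative to the finitely many walls that separate the (finitely many) alcoves of $\mathcal Y$ from ${\bf b}$; more precisely, $\rho_{{\bf b},\mathcal A}$ restricted to a bounded set $\mathcal Y$ depends only on the set of half-apartments containing ${\bf b}$, intersected with the finite set of walls ``relevant to $\mathcal Y$''. For ${\bf b} = t_{-\nu}{\bf a}$ with $\nu$ $M$-central and sufficiently $N$-dominant, ${\bf b}$ lies on the negative side of $H_\alpha$ (i.e.\ where $\alpha+n<0$ for all $n$ in the relevant finite range) for every root $\alpha$ in $\mathrm{Lie}(N)$, and the $M$-central condition pins down the position relative to the walls of $M$-type independently of how deep $\nu$ goes. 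Hence once $\nu$ is deep enough that $I_{\mathcal Y}\subset I_{\bf b}$ — equivalently, once ${\bf b}$ is on the far side of all walls relevant to $\mathcal Y$ in the $N$-directions — the combinatorial data, hence $\rho_{{\bf b},\mathcal A}|_{\mathcal Y}$, is independent of the choice. This legitimizes the definition of $\rho_{I_P,\mathcal A}$ as the common value. I would cite \cite{HKM} or \cite{Hai24} here, since this kind of stabilization is exactly what underlies the $I_P$-retraction used there.

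\medskip

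\textbf{Proof of the orbit identity.} For the inclusion $I_P{\bf x}\subseteq\rho_{I_P,\mathcal A}^{-1}({\bf x})$: by definition $\rho_{{\bf b},\mathcal A}$ fixes $\mathcal A$ pointwise, and for $g\in I_{\bf b}$ (the connected fixer of ${\bf b}$) one has the standard identity $\rho_{{\bf b},\mathcal A}(g y) = \rho_{{\bf b},\mathcal A}(y)$ for all $y$; since any given $g\in I_P$ lies in $I_{\bf b}$ for ${\bf b}$ deep enough (as $I_P = \bigcup I_{\bf b}$ over such ${\bf b}$, which is precisely the point that $LN\cdot I_M$ is the increasing union of the connected fixers $I_{t_{-\nu}{\bf a}}$), we get $\rho_{I_P,\mathcal A}(g{\bf x}) = \rho_{I_P,\mathcal A}({\bf x}) = {\bf x}$. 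For the reverse inclusion $\rho_{I_P,\mathcal A}^{-1}({\bf x})\subseteq I_P{\bf x}$: given an alcove ${\bf y}\subset\mathfrak B(G,F)$ with $\rho_{I_P,\mathcal A}({\bf y}) = {\bf x}$, apply (i) with $\mathcal Y = \{{\bf y}\}$ to find $g\in I_{\mathcal Y}\subset I_P$ with $g{\bf y}\subset\mathcal A$; since $\rho_{{\bf b},\mathcal A}$ is the identity on $\mathcal A$ and agrees with $\rho_{I_P,\mathcal A}$ near $g{\bf y}$, we get $g{\bf y} = \rho_{I_P,\mathcal A}(g{\bf y}) = \rho_{I_P,\mathcal A}({\bf y}) = {\bf x}$ (using $g\in I_{\bf b}\Rightarrow\rho_{{\bf b},\mathcal A}(g{\bf y})=\rho_{{\bf b},\mathcal A}({\bf y})$ again), so ${\bf y} = g^{-1}{\bf x}\in I_P{\bf x}$.

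\medskip

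\textbf{Main obstacle.} The genuinely delicate point is the stabilization claim in (ii) together with the identification $I_P = \bigcup_{\bf b} I_{\bf b}$ over alcoves ${\bf b}=t_{-\nu}{\bf a}$ with $\nu$ $M$-central and sufficiently $N$-dominant: one must check that the connected fixer $I_{t_{-\nu}{\bf a}}$ really does equal (for $\nu$ in that range, up to the bounded pieces) a group of the form $L^{\geq -m}N\cdot I_M$, i.e.\ that conjugating $I$ by $t_{-\nu}$ enlarges exactly the $N$-root-subgroups and leaves the $M$-Iwahori part essentially fixed. This is a root-subgroup bookkeeping argument: $t_{-\nu}$ conjugation scales the $\alpha$-root subgroup's ``level'' by $\langle\alpha,\nu\rangle$, which is $0$ on $M$-roots (by $M$-centrality, so $I_M$ is preserved) and large positive on $N$-roots (by $N$-dominance, so those get fattened into $LN$), while the $\bar N$-directions shrink and play no role since we only need containment $I_{\mathcal Y}\subset I_{\bf b}$, not equality. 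I expect to lean on \cite[Lem.\,2.9.1]{Hai09} for the $I_M = I\cap LM$ identification and on \cite{HKM, Hai24} for the retraction-stabilization, so the writeup can be kept short.
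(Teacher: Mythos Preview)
The paper's own proof is a bare citation to \cite[Lem.\,6.4, 6.5]{HKM} (with a nod to \cite[\S11.2]{GHKR10}); there is no argument given in the paper itself. Your proposal is effectively a reconstruction of what those references contain, and in broad outline it is the right picture: exhaust $I_P$ by bounded pieces, use that retractions from alcoves far enough in the $N$-dominant $M$-central direction stabilize on bounded sets, and deduce the orbit identity from $I_{\bf b}$-invariance of $\rho_{{\bf b},\mathcal A}$. Since you also intend to cite \cite{HKM}, your writeup would in practice converge to the paper's.

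That said, two points in your sketch need repair. First, your argument for (i) is circular: you justify the covering $\mathcal Y\subset\bigcup_{g\in I_{\mathcal Y}}g^{-1}\mathcal A$ by invoking transitivity of $I_P$ on fibers of $\rho_{{\bf b},\mathcal A}$, which is exactly the final orbit identity you are trying to establish. The non-circular route (as in \cite{GHKR10, HKM}) is to use an Iwasawa-type decomposition $LG = \bigsqcup_{w\in W} I_P\, w\, I$, so that every alcove in the building lies in the $I_P$-orbit of an alcove $w{\bf a}\subset\mathcal A$; boundedness of $\mathcal Y$ then lets you take a bounded $I_{\mathcal Y}\subset I_P$.

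Second, the assertion ``$I_P=\bigcup_{\bf b} I_{\bf b}$'' in your forward-inclusion argument is false as stated: each $I_{\bf b}=t^{-\nu}I t^{\nu}$ still contains (shrinking but nonzero) pieces of the $\bar N$-root subgroups, so $I_{\bf b}\not\subset I_P$ and the union is not $I_P$. What you actually need, and what you correctly isolate in your ``Main obstacle'' paragraph, is only the one-sided statement that every $g\in I_P$ lies in $I_{\bf b}$ once ${\bf b}$ is deep enough (because $I_M\subset I_{\bf b}$ for all such ${\bf b}$ by $M$-centrality of $\nu$, and any bounded piece of $LN$ is eventually contained). With these two fixes your sketch matches the argument underlying the cited lemmas.
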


\begin{proof}
See \cite[Lem.\,6.4, Lem.\,6.5]{HKM}.  These results elaborated on material from \cite[$\S$11.2]{GHKR10}, where these particular retractions were first studied.
\end{proof}
In what follows we write $\rho_{I_P, \mathcal A}$ as $\rho_{{\bf a}_{I_P}, \mathcal A}$ where ${\bf a}_{I_P}$ is any ${\bf b}$ as in (ii).
A final ingredient we need is the following. We know that for any reduced expression $\dot{x} = s_1 \cdots s_r \tau \in W$, the $k$-variety $IxI/I$ may be identified with an {\em uncompactified Demazure variety over $k$} in (a given connected component of) a twisted product ${\rm Fl}_G \widetilde{\times} \cdots \widetilde{\times} {\rm Fl}_G$, as in \cite[$\S$7.1]{Hai25}. The following result plays a key role in this article.

\begin{lem} \label{base_case_decomp}
For any $x, y \in W$ with the same $\Omega$-component, and fixing a reduced expression $\dot{x}$ for $x$, we have a paving in the category of $k$-schemes with reduced structure
$$
I_P y I/I \cap I x I/I \cong \bigsqcup_{{\bf a}_\bullet \in \mathcal P^{{\bf a}_{I_P}}_{x}(y)} \mathbb A_k^{c^+({\bf a}_\bullet)} \times (\mathbb A_k^1 - \mathbb A_k^0)^{f^+({\bf a}_\bullet)}.
$$
\end{lem}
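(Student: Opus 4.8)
The plan is to reduce the statement to an intersection inside an (uncompactified) Demazure variety and then peel off the word $\dot x = s_1 \cdots s_r \tau$ one letter at a time, tracking at each stage which alcove walk of type $s_1 \cdots s_i$ has been traced out. First I would set up the geometric model: fix the reduced expression $\dot x$, and identify $IxI/I$ with the uncompactified Demazure variety $D(\dot x) \subset \mathrm{Fl}_G \widetilde\times \cdots \widetilde\times \mathrm{Fl}_G$ as in \cite[$\S$7.1]{Hai24}, so that a point of $IxI/I$ corresponds to a chain of alcoves ${\bf a} = {\bf a}_0, {\bf a}_1, \dots, {\bf a}_r = x{\bf a}$ in $\mathrm{Fl}_G$ with ${\bf a}_{i-1}$ and ${\bf a}_i$ either equal or adjacent across the appropriate type-$s_i$ wall, subject to the genuine-crossing condition that forces the total to have length $r$. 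The key point then is that intersecting with $I_P y I/I$ means, by Lemma \ref{I_P_vs_retraction}, that $\rho_{{\bf a}_{I_P},\mathcal A}$ carries the endpoint to $y{\bf a}$; more usefully, I would push the retraction all the way down the chain, so that the condition becomes that the retracted chain $\rho_{{\bf a}_{I_P},\mathcal A}({\bf a}_0), \dots, \rho_{{\bf a}_{I_P},\mathcal A}({\bf a}_i)$ is, at each step, exactly a ${\bf a}_{I_P}$-positively folded alcove walk of type $s_1 \cdots s_i$ — this is precisely where the positive-folding condition (negative-direction foldings allowed, but only as stalls) enters, and it is the combinatorial heart of the identification of the index set with $\mathcal P^{{\bf a}_{I_P}}_x(y)$.

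The induction on $r$ would go as follows. Write $\dot x' = s_1 \cdots s_{r-1}\tau'$ (with $\tau'$ adjusted so the $\Omega$-components match) and consider the projection $\pi : D(\dot x) \to D(\dot x')$ forgetting the last alcove; this is a $\mathbb P^1$-bundle. Over a point corresponding to a chain ending at alcove ${\bf c}$ with $w_{\bf c}$ its affine Weyl element, the fiber is the $\mathbb P^1$ of alcoves sharing the type-$s_r$ face of ${\bf c}$, and the condition of landing in $I_P y I/I$ cuts this $\mathbb P^1$ down according to the position of the retracted wall $\rho_{{\bf a}_{I_P},\mathcal A}$ sees relative to ${\bf a}_{I_P}$: if the endpoint must move ``away from infinity'' we get a single point ($\mathbb A^0$), if it must stay put we get an $\mathbb A^1$ of foldings together with possibly one negative crossing contributing a point, and if it may move either way within the apartment we get an $\mathbb A^1$. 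Matching these three cases against the labels $c^+_{s_r}, f_{s_r}, c^-_{s_r}$ — recalling that $\dim({\bf a}_\bullet) = c^+ + f^+$ and that negative crossings contribute nothing — gives exactly the factor $\mathbb A_k^{c^+(\text{last step})} \times (\mathbb A^1_k - \mathbb A^0_k)^{f^+(\text{last step})}$ (or a point) attached to each walk in $\mathcal P^{{\bf a}_{I_P}}_{x'}(y')$ extending it. Summing over the compatible endpoints $y'$ and invoking the inductive hypothesis, and checking that the stratification is by locally closed subschemes in the sense of $\S$2.5, completes the step. For the base case $r=0$ the statement reads $I_P\tau I/I \cap I\tau I/I = \{\tau{\bf a}\}$ when $y = \tau$ and is empty otherwise, which is immediate.

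The main obstacle I anticipate is the bookkeeping in the inductive step: one must verify carefully that the three-way case split on the $\mathbb P^1$ fiber is governed precisely by the local positive-folding combinatorics at step $r$ with respect to the \emph{retracted} configuration, and in particular that the retraction $\rho_{{\bf a}_{I_P},\mathcal A}$, applied after adding the new alcove, is compatible with the retraction applied to the truncated chain — i.e. that ``retract, then extend'' agrees with ``extend, then retract'' in the relevant bounded region. This is where one needs Lemma \ref{I_P_vs_retraction}(ii) (stability of $\rho_{{\bf b},\mathcal A}$ for ${\bf b}$ deep enough) applied to the bounded set swept out by $D(\dot x)$, together with the defining property $\rho_{{\bf a}_{I_P},\mathcal A}^{-1}({\bf x}) = I_P{\bf x}$. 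A secondary but routine point is to ensure the $\mathbb A^1 - \mathbb A^0$ factors genuinely arise as such (the folding locus is the $\mathbb P^1$ minus the crossing points, hence $\mathbb A^1$ minus a point when one crossing direction is excluded and minus two points — reassembled — in the fully-free case, but the free case contributes to $c^{\pm}$ not $f^+$, so the accounting must be done with care). Once the local picture is pinned down, the global assembly into a paving by the schemes $\mathbb A_k^{c^+} \times (\mathbb A^1_k - \mathbb A^0_k)^{f^+}$ follows formally from the $\mathbb P^1$-bundle structure and the inductive hypothesis.
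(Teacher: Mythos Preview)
Your overall strategy is essentially the one the paper has in mind: the paper's own proof is just two sentences citing Lemma \ref{I_P_vs_retraction} to convert $I_P$-orbits into retraction fibers, and then invoking the standard step-by-step retraction analysis from \cite[Prop.\,2.3.12, 2.9.1]{BT72}, \cite[$\S$6.1]{GHKR06}, and \cite[$\S$6.3]{Hai24}. Your proposal is a fleshed-out version of exactly that argument.

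That said, your local case analysis on the fiber is garbled in a way that would make the inductive step fail as written. Two concrete issues:

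\begin{enumerate}
\item The projection $D(\dot x) \to D(\dot x')$ for the \emph{uncompactified} Demazure variety is an $\mathbb A^1$-bundle, not a $\mathbb P^1$-bundle: you yourself impose the genuine-crossing condition, which removes the point of the $\mathbb P^1$ corresponding to ${\bf a}_i = {\bf a}_{i-1}$ in the building.
\item On that $\mathbb A^1$ fiber there are exactly \emph{two} geometric cases, not three, governed by whether the retracted wall $w_{\rho({\bf a}_{i-1})}H_{s_r}$ separates $\rho({\bf a}_{i-1})$ from ${\bf a}_{I_P}$. If it does not (positive direction), the entire $\mathbb A^1$ retracts to the crossed alcove and contributes the label $c^+$ with factor $\mathbb A^1$. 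If it does (negative direction), a single point retracts across (label $c^-$, factor a point) and the complementary $\mathbb A^1 - \mathbb A^0$ retracts to the same alcove (label $f^+$, factor $\mathbb A^1 - \mathbb A^0$). The three labels $c^+, c^-, f^+$ thus arise from a \emph{binary} geometric dichotomy, with the negative case splitting into two strata. There is no ``fully-free'' case in which both crossing directions are simultaneously available, and your parenthetical about ``$\mathbb P^1$ minus two points, reassembled'' has no counterpart in the actual picture.
\end{enumerate}

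Once you correct the fiber to $\mathbb A^1$ and replace your three-way split by the two-way split above, the induction goes through cleanly and yields precisely $\mathbb A_k^{c^+({\bf a}_\bullet)} \times (\mathbb A^1_k - \mathbb A^0_k)^{f^+({\bf a}_\bullet)}$ for each ${\bf a}_\bullet \in \mathcal P^{{\bf a}_{I_P}}_x(y)$. Your remarks about the compatibility of retraction with extension (via Lemma \ref{I_P_vs_retraction}(ii) on a bounded set containing all of $D(\dot x)$) are on point and are exactly what is needed.
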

The right hand side is describing a cellular paving of the left hand side, in the sense of \cite{Hai25}. The left hand side is given reduced structure, and the terms in the union on the right are locally closed subschemes.  We make no claim about the closure relations.

\begin{proof}
First we use Lemma \ref{I_P_vs_retraction} to interpret the $I_P$-orbit in terms of retraction with respect to any suitable single alcove ${\bf a}_{I_P}$. Then the statement is a direct consequence of well-known properties of retractions, see \cite[Prop.\,2.3.12, Prop.\,2.9.1]{BT72}.  A summary of how this works is given in \cite[$\S$6.1]{GHKR06}.  One can also use the method of proof in \cite[$\S$6.3]{Hai25}.
\end{proof}

\subsection{A description of parabolic Mirkovi\'c-Vilonen intersections}

We fix $\mu \in X_*(T)^+$ and $\lambda \in X_*(T)^{+_M}$.  We let $\nu \in X_*(T)^+$ be an auxillary dominant cocharacter.

\begin{prop} \label{HKM_prop}
In the above situation, for $\nu$ $M$-central and sufficiently $N$-dominant, we have
$$
K_P t^\lambda K/K \, \cap \, K t^{\mu} K/K = (t^{-\nu}Kt^{\nu}) t^{\lambda}K/K \, \cap \, Kt^\mu K/K.
$$
\end{prop}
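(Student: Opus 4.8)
The plan is to reduce the claimed identity to the equality of two $I_P$-stable subsets obtained after ``spreading out over the affine flag variety'', and then to invoke the retraction description in Lemma \ref{I_P_vs_retraction}. First I would pass from $\on{Gr}_G$ to $\on{Fl}_G$: writing $\pi\co \on{Fl}_G \to \on{Gr}_G$ for the projection, recall that $K t^\mu K/K = \pi(I t^\mu K/K) = \bigsqcup_{w \in W_0} I w t^\mu K/K$ and, more usefully, that each side of the asserted identity is the image under $\pi$ of a suitable $I_P$-stable (resp.\ $(t^{-\nu}It^\nu)$-stable) locally closed subset of $\on{Fl}_G$. The key point is that $K_P = L^+M\,LN$ and $t^{-\nu}Kt^{\nu}$ both contain $I_P = I_M LN$ (for $t^{-\nu}Kt^\nu$ this uses that $\nu$ is $M$-central, so $t^{-\nu}K_Mt^\nu = K_M \supset I_M$, together with the standard fact that conjugating $K$ by a dominant $t^\nu$ shrinks the $LN$-part but $LN \subset t^{-\nu}Kt^\nu$ still holds because the roots of $N$ are made more dominant), so both intersections are unions of $I_P$-orbits, and it suffices to compare which $I_P$-orbits on $\on{Gr}_G$ meet each side.

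Next I would use Lemma \ref{I_P_vs_retraction} to translate ``which $I_P$-orbits occur'' into a statement about the retraction $\rho_{{\bf a}_{I_P},\mathcal A}$: an $I_P$-orbit $I_P\,x{\bf 0} \subset \on{Gr}_G$ meets $K_P t^\lambda K/K$ (resp.\ $(t^{-\nu}Kt^\nu)t^\lambda K/K$) precisely when $x{\bf 0}$ lies in the image of the corresponding orbit under $\rho_{{\bf a}_{I_P},\mathcal A}$. So the proposition becomes the assertion that, for $\nu$ $M$-central and sufficiently $N$-dominant, the two subsets $\rho_{{\bf a}_{I_P},\mathcal A}(K_P t^\lambda K/K)$ and $\rho_{{\bf a}_{I_P},\mathcal A}((t^{-\nu}Kt^\nu)t^\lambda K/K)$ of $\mathcal A$ coincide (and likewise after intersecting with the retraction of $Kt^\mu K/K$, which is common to both). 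Here I would conjugate by $t^\nu$: since $t^\nu$ normalizes $\mathcal A$ and acts on it by the affine translation $t_{-\nu}$, we have $(t^{-\nu}Kt^\nu)t^\lambda K/K = t^{-\nu}\big(K\,t^{\nu+\lambda}K/K\big)$, and the retraction onto $\mathcal A$ with respect to ${\bf a}_{I_P} = t_{-\nu}{\bf a}$ intertwines appropriately with this translation. Combined with \cite[Lem.\,9.3]{HKM} and the ``stability'' already packaged in Lemma \ref{I_P_vs_retraction}(ii)--- that for $\nu$ deep enough in the $N$-dominant direction the retraction $\rho_{{\bf a}_{I_P},\mathcal A}$ is independent of the precise choice of such $\nu$ on any fixed bounded region, and agrees with the retraction ``towards $N$-infinity'' which computes $K_P$-orbits--- one gets that enlarging $\lambda$ to $\lambda+\nu$ and translating back by $t^{-\nu}$ does not change the set of $I_P$-orbits that meet the intersection, once $\nu$ is $N$-dominant enough relative to the bounded region $\overline{Kt^\mu K/K} \cap \overline{K_Pt^\lambda K/K}$.

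I expect the main obstacle to be the uniformity in $\nu$: one must show that a single ``sufficiently $N$-dominant'' threshold works simultaneously for the dimension/combinatorial bound (so that only orbits $I_P x{\bf 0}$ with $x{\bf 0}$ in a fixed bounded region contribute, via Proposition \ref{K_P_upper_bd_prop}) and for the agreement of retractions in Lemma \ref{I_P_vs_retraction}(ii). Concretely, the bounded set $\mathcal Y$ to feed into Lemma \ref{I_P_vs_retraction} should be (a neighborhood of) the union of the relevant Schubert cells $\overline{Kt^\mu K/K}$ and the finitely many vertices $x{\bf 0}$ that can possibly occur; one then chooses $\nu$ with $I_{\mathcal Y} \subset I_{t_{-\nu}{\bf a}}$, which is exactly the $N$-dominance hypothesis. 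The honest content is therefore bookkeeping: checking that conjugation by $t^\nu$ carries the ``$K_P$ towards $N$-infinity'' picture to the ``$K$ at the shifted translation'' picture on the nose, and that this is compatible with $\pi\co \on{Fl}_G \to \on{Gr}_G$ so the comparison descends from $I_P$-orbits on $\on{Fl}_G$ to the MV-intersections on $\on{Gr}_G$. None of this requires new ideas beyond \cite{HKM, GHKR10}; the proposition is essentially a clean restatement of \cite[Lem.\,9.3]{HKM} adapted to the parabolic setting, so in the write-up I would cite that lemma for the core estimate and spend the remaining lines on the conjugation-by-$t^\nu$ identity and the reduction through $\on{Fl}_G$.
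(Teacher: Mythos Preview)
Your first reduction contains a genuine error. You assert that $I_P = I_M\,LN$ is contained in $t^{-\nu}Kt^\nu$, but this is impossible: $t^{-\nu}Kt^\nu$ is a \emph{bounded} subgroup of $LG$ (it is conjugate to the parahoric $K$), while $LN$ is unbounded, so $LN \not\subset t^{-\nu}Kt^\nu$ for any $\nu$. What conjugation by $t^{-\nu}$ actually does is enlarge $K\cap LN$ to a bigger---but still bounded---piece of $LN$; it never exhausts $LN$. It follows that neither $(t^{-\nu}Kt^\nu)t^\lambda K/K$ nor $Kt^\mu K/K$ is $I_P$-stable (for the latter, already $LN\cdot(eK/K)\neq\{eK/K\}$ whenever $N\neq 1$), so the two intersections in the proposition are \emph{not} unions of $I_P$-orbits, and your reduction to ``comparing which $I_P$-orbits meet each side'' via Lemma~\ref{I_P_vs_retraction} does not get off the ground.

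The paper does not reprove this proposition; it simply cites \cite[Prop.\,10.1]{Hai24} (which in turn rests on \cite{HKM}). The underlying argument is the boundedness idea you gesture at later, but organized without $I_P$-orbits: one shows directly, via an Iwasawa-type factorization, that on the bounded region cut out by $Kt^\mu K/K$ only a bounded piece of $LN$ (depending on $\mu,\lambda$) is needed to realize the $K_P$-orbit through $t^\lambda$, and that piece lies in $t^{-\nu}Kt^\nu$ once $\nu$ is sufficiently $N$-dominant; the reverse inclusion uses that the $L\bar N$-part of $t^{-\nu}Kt^\nu$ becomes small enough to be absorbed. The retraction $\rho_{{\bf a}_{I_P},\mathcal A}$ and the passage to $I_P$-orbits enter only \emph{after} Proposition~\ref{HKM_prop}, in the derivation of Lemma~\ref{key_P-MV_lem}: there one first uses the proposition to replace $K_P$ by the bounded group $t^{-\nu}Kt^\nu$, decomposes into ordinary $I$-double cosets, and only then conjugates by $t^{-\nu}$ (using that $t^{-\nu}It^\nu$ approximates $I_P$ on bounded sets) to reach $I_P$-orbits. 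In other words, Proposition~\ref{HKM_prop} is precisely the device that \emph{enables} the later $I_P$-orbit analysis, not a consequence of it.
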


\begin{proof}
This is proved in  \cite[Prop.\,10.1]{Hai25}. Note that the reference \cite{HKM} cited there used the opposite convention for the action of $t^\nu$ on the apartment $\mathcal A_T$ (translation by $\nu$ instead of $-\nu$), but the proof still applies.
\end{proof}

For $\nu$ sufficiently $N$-dominant and $M$-central, we have the following pavings given up to equality or isomorphism (bear in mind Remark \ref{minus_rmk}):
\begin{align*}
K_P t^{\lambda} K/K \, \cap \, K t^{\mu} K/K &= t^{-\nu}K t^{\nu}t^{\lambda}K/K  \, \cap \, K t^{\mu} K/K \\
&\cong \bigsqcup_{w \in W_0/W_{0,-\mu}} \bigsqcup_{w''\in W_0/W_{0,-\nu - \lambda}} I w''t_{-\nu - \lambda} K/K \, \cap \, t_{-\nu} I wt_{-\mu} K/K.
\end{align*}
For $\nu$ sufficiently $N$-dominant, the only $w''$ which give rise to a nonempty intersection satisfy $w'' \in W_{0,M}$, or equivalently, $w''$ fixes $\nu$. To see this, note that the $k$-points of $I t_{-w''(\nu-\lambda)} K/K$ can be identified with the set of minimal facets (`vertices'')  in the building in the $LG(k)$-orbit of ${\bf 0}$, which retract to the vertex $-w''(\nu + \lambda) + {\bf 0}$ under the retraction $\rho_{{\bf a}, \mathcal A_T}$; if $w'' \notin W_{0,M}$, then $-w''(\nu + \lambda) + {\bf 0}$  can be made arbitrarily far from the vertex $-\nu + {\bf 0}$ by choosing $\nu$ sufficiently $N$-dominant, and in particular, it will not be among the vertices contained in the set $\rho_{{\bf a}, \mathcal A_T} (t_{-\nu} I t_{-w(\mu)} K/K)$. (It might be useful to note that the latter set is contained in a ball centered at $-\nu + {\bf 0}$  with a radius which may be bounded above independently of $\nu$; indeed, the diameter of the set $I wt_{-\mu}K/K$ is independent of $\nu$, and this diameter does not increase under translation and retraction.) Therefore the intersection corresponding to $w''$ can be nonempty only if $w''(\nu) = \nu$, that is, if $w'' \in W_{0,M}$.  Thus the above is
$$
\bigsqcup_{w \in W_0/W_{0,-\mu}}  \bigsqcup_{w'' \in W_{0,M}/W_{0,M,-\lambda}} It_{-\nu - w''(\lambda)}K/K \, \cap \, t_{-\nu} I wt_{-\mu} K/K.
$$
By \cite[Lem.\,7.2]{Hai25}, this is isomorphic to 
$$
\bigsqcup_{w \in W_0/W_{0,-\mu}} \bigsqcup_{w' \in W_0}\bigsqcup_{w'' \in W_{0,M}/W_{0,M,-\lambda}} I t_{-\nu} t_{-w''(\lambda)} w' I/I \, \cap \, t_{-\nu} I (w t_{-\mu})_{\bf 0} I/I.
$$
Recall that $(t_{-w(\mu)})_{\bf 0}$ denotes the unique right $W_0$-minimal element in the coset $t_{-w(\mu)}W_0$. Multiplying on the left by $t^{-\nu}$ and using that $t^{-\nu} I t^{\nu}$ approximates $I_P$ as in Lemma \ref{I_P_vs_retraction}, this is isomorphic to
$$
\bigsqcup_{w \in W_0/W_{0,-\mu}} \bigsqcup_{w' \in W_0} \bigsqcup_{w'' \in W_{0,M}/W_{0,M,-\lambda}}  I_P t_{-w''(\lambda)} w' I/I \, \cap \, I (w t_{-\mu})_{\bf 0} I/I.
$$
We have proved the first isomorphism in the following lemma, and the second follows from another application of \cite[Lem.\,7.2]{Hai25}.

\begin{lem} \label{key_P-MV_lem}
For $\mu \in X_*(T)^+$ and $\lambda \in X_*(T)$, there are pavings of $k$-schemes with reduced structure
\begin{align} 
K_Pt^\lambda K/K \, \cap \, Kt^\mu K/K &\cong
\bigsqcup_{w \in W_0/W_{0,-\mu}} \bigsqcup_{w' \in W_0} \bigsqcup_{w'' \in W_{0,M}/W_{0,M,-\lambda}}  I_P t_{-w''(\lambda)} w' I/I \, \cap \, I (w t_{-\mu})_{\bf 0} I/I \label{refined_union_I}\\ 
&\cong  \bigsqcup_{w \in W_0/W_{0,-\mu}} \bigsqcup_{w'' \in W_{0,M}/W_{0,M,-\lambda}}  I_P t_{-w''(\lambda)} K/K \, \cap \, I (w t_{-\mu})_{\bf 0} K/K. \label{refined_union_K}
\end{align}
\end{lem}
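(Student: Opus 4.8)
The plan is to bootstrap from Lemma~\ref{base_case_decomp}, which already handles each term $I_P t_{-\lambda_w} w' I/I \cap I(wt_{-\mu})_{\bf 0} I/I$ of the refined union~(\ref{refined_union_I}), and to carry out the chain of isomorphisms assembled in the discussion preceding the lemma statement. Concretely, I would proceed in four stages. First, invoke Proposition~\ref{HKM_prop} to replace $K_P t^\lambda K/K$ by $(t^{-\nu}Kt^\nu)t^\lambda K/K$ for an auxiliary $\nu \in X_*(T)^+$ that is $M$-central and sufficiently $N$-dominant; all subsequent manipulations happen inside $\mathrm{Gr}_G$ with this concrete approximating Iwahori. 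Second, decompose both factors into Schubert cells: $Kt^\mu K/K = \bigsqcup_{w \in W_0/W_{0,-\mu}} It_{-w(\mu)}K/K$ via $K = \bigsqcup_{w \in W_0} IwI$ and the standard reduction (stabilizer $W_{0,-\mu}$), and similarly $t^{-\nu}Kt^{\nu}t^\lambda K/K = \bigsqcup_{w'' \in W_0/W_{0,-\nu-\lambda}} t^{-\nu} I w'' t_{-\nu-\lambda}K/K$. This yields the double disjoint union displayed just after Proposition~\ref{HKM_prop}.

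Third, prune the $w''$ index set down to $W_{0,M}/W_{0,M,-\lambda}$: this is the retraction argument already sketched in the excerpt. One identifies the $k$-points of $It_{-w''(\nu+\lambda)}K/K$ with vertices in the $LG(k)$-orbit of ${\bf 0}$ retracting to $-w''(\nu+\lambda)+{\bf 0}$ under $\rho_{{\bf a},\mathcal A_T}$, observes that $\rho_{{\bf a},\mathcal A_T}(t^{-\nu}It_{-w(\mu)}K/K)$ is a bounded set of vertices whose distance from $-\nu + {\bf 0}$ is controlled independently of $\nu$, and notes that if $w'' \notin W_{0,M}$ then $\|{-w''(\nu+\lambda)} - ({-\nu})\|$ grows without bound as $\nu$ becomes more $N$-dominant — forcing the intersection to be empty unless $w''(\nu) = \nu$, i.e.\ $w'' \in W_{0,M}$. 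Fourth, apply \cite[Lem.\,7.2]{Hai24} to pass from the $\mathrm{Gr}_G$-picture to the $\mathrm{Fl}_G$-picture (introducing the extra index $w' \in W_0$ and replacing $wt_{-\mu}$ by its right $W_0$-minimal representative $(wt_{-\mu})_{\bf 0}$), then left-translate by $t^{-\nu}$ and use Lemma~\ref{I_P_vs_retraction} to recognize $t^{-\nu}It^{\nu}$ as the approximating subgroup for $I_P$ on the relevant bounded set. This produces the union $\bigsqcup_{w,w',w''} I_P t_{-w''(\lambda)}w' I/I \cap I(wt_{-\mu})_{\bf 0}I/I$, and a second retraction/distance-preservation argument (the one given in the excerpt: a minimal gallery from ${\bf 0}$ to any vertex of $It_{-w(\mu)}K/K$ cannot, after folding, cross a wall $w_1(H_s)$ for $s \in S_M$, since ${\bf a}$ and ${\bf a}_{I_P}$ both lie in ${\bf a}_M$) shows that for fixed $w$ only the single $w''$ with $-w''(\lambda)+{\bf 0}$ in the same closed $M$-Weyl chamber as $-w(\mu)+{\bf 0}$ survives. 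Renaming that cocharacter $\lambda_w$ gives isomorphism~(\ref{refined_union_I}). For~(\ref{refined_union_K}), run the analogous argument in reverse: collapse the $w'$ sum back into a single $\mathrm{Gr}_G$-Schubert cell using \cite[Lem.\,7.2]{Hai24} once more, obtaining $I_P t_{-\lambda_w}K/K \cap It_{-w(\mu)}K/K$.

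I expect the main obstacle to be the bookkeeping in the third and fourth stages — specifically, making the ``$\nu$ sufficiently $N$-dominant'' quantifiers uniform. One must choose a single $\nu$ that simultaneously (a) validates Proposition~\ref{HKM_prop}, (b) makes $t^{-\nu}It^{\nu}$ contain the bounded fixer $I_{\mathcal Y}$ attached (via Lemma~\ref{I_P_vs_retraction}) to the bounded region swept out by the finitely many Schubert cells $It_{-w(\mu)}K/K$ and their retractions, and (c) pushes every ``bad'' vertex $-w''(\nu+\lambda)+{\bf 0}$ ($w'' \notin W_{0,M}$) strictly outside that region. Since there are only finitely many $w, w', w''$ in play and each constraint is an open condition on the $N$-dominance of $\nu$, such a $\nu$ exists; but the argument should make explicit that the final answer (the index set $\mathcal P^{{\bf a}_{I_P}}_{\ldots}$ and the cells) is independent of the choice — which is exactly what Lemma~\ref{I_P_vs_retraction}(ii) guarantees, since all sufficiently deep $t_{-\nu}{\bf a}$ give the same retraction $\rho_{{\bf a}_{I_P},\mathcal A_T}$ on any fixed bounded set. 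A secondary, purely notational nuisance is tracking the Bruhat-Tits sign convention (Remark~\ref{minus_rmk}) through every translation, so that the closure-relation-sensitive identities $I s I \cdot t^\lambda I = \ldots$ and the passage $t_{-w(\mu)} \rightsquigarrow (wt_{-\mu})_{\bf 0}$ come out right.
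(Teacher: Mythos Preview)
Your proposal is correct and follows essentially the same approach as the paper: the proof of Lemma~\ref{key_P-MV_lem} is exactly the chain of manipulations in the discussion preceding its statement (Proposition~\ref{HKM_prop}, Iwahori decomposition of both factors, the first retraction argument to force $w'' \in W_{0,M}$, \cite[Lem.\,7.2]{Hai24} to pass to $\Fl_G$, left-translation plus Lemma~\ref{I_P_vs_retraction} to produce $I_P$, the second retraction argument isolating $\lambda_w$, and a reverse application of \cite[Lem.\,7.2]{Hai24} for (\ref{refined_union_K})). One small correction: Lemma~\ref{base_case_decomp} is not actually used in the proof of Lemma~\ref{key_P-MV_lem} --- it enters only later, in Theorem~\ref{Thm_A_body}, to pave each individual term of (\ref{refined_union_I}) --- so your opening sentence slightly misframes its role.
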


\subsection{More about the elements $(t_{-w(\mu)})_{\bf 0}$}

The following gives us a better understanding of the elements $(t_{-w(\mu)})_{\bf 0}$ which appear above.  Fix $\mu \in X_*(T)^+$, and let $W_{0,\mu} \subset W_0$ denote the fixer subgroup, which is a Coxeter subgroup generated by the simple finite reflections which fix $\mu$.  Let $W_0^{\mu} \subset W_0$ denote the set of elements $v \in W_0$ which are of  minimal length in their cosets $vW_{0,\mu}$. 

\begin{lem} \label{type_elements}
If $w \in W_0^{\mu}$, then $(t_{-w(\mu)})_{\bf 0} = t_{-w(\mu)} w$.  In particular,
$$
\ell((t_{-w(\mu)})_{\bf 0}) = \ell(t_{-\mu}) - \ell(w).
$$
\end{lem}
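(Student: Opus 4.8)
The plan is to show that $t_{-w(\mu)}w$ is the unique right $W_0$-minimal element in the coset $t_{-w(\mu)}W_0$, for $w \in W_0^\mu$. First I would observe that since $w$ fixes the origin, $t_{-w(\mu)}w$ and $t_{-w(\mu)}$ lie in the same coset $t_{-w(\mu)}W_0$, so it suffices to check that $t_{-w(\mu)}w$ is right $W_0$-minimal, i.e.\ that $\ell(t_{-w(\mu)}w s) > \ell(t_{-w(\mu)}w)$ for every simple reflection $s \in S$. The key reformulation is that $t_{-w(\mu)}w = w \, t_{-\mu}$ (conjugating the translation: $w t_{-\mu} w^{-1} = t_{-w(\mu)}$), so I am asking for the right $W_0$-minimal representative of the coset $w t_{-\mu} W_0 = w W_{0,\mu} t_{-\mu} W_0$, using that $W_{0,\mu}$ commutes with $t_{-\mu}$ (as $W_{0,\mu}$ fixes $\mu$). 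Thus the coset $t_{-w(\mu)}W_0$ depends only on the class $wW_{0,\mu}$, and I may as well take $w \in W_0^\mu$ its minimal-length representative.

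Next I would compute lengths. Using the standard length formula for the extended affine Weyl group, for $\mu$ dominant and $v \in W_0$ one has $\ell(t_{-\mu}v)$ and $\ell(v t_{-\mu})$ expressible via counting positive roots $\alpha$ with $\langle \alpha, \mu\rangle > 0$ sent to negative roots (or similar), and in particular the well-known fact that $\ell(t_{-\mu}) = \langle 2\rho, \mu\rangle = \sum_{\alpha \in \Phi^+}\langle\alpha,\mu\rangle$ and that $t_{-\mu}$ is the maximal-length element in $W_{0,\mu}t_{-\mu}W_0$ while simultaneously, for $w \in W_0^\mu$, $\ell(w t_{-\mu}) = \ell(t_{-\mu}) - \ell(w)$. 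Concretely I would verify $\ell(w t_{-\mu}) = \ell(t_{-\mu}) - \ell(w)$ by the reduced-word/ subword argument: a reduced expression for $t_{-\mu}$ can be chosen beginning with a reduced expression for $w^{-1}$ (since $w^{-1} \le t_{-\mu}$ in a suitable sense when $w \in W_0^\mu$ and $\mu$ dominant — this is where the minimality of $w$ in its coset is essential), so left-multiplying by $w$ cancels those letters. This simultaneously shows $wt_{-\mu}$ is right $W_0$-minimal: any further right multiplication by a simple reflection $s$ must increase length, because $\ell(wt_{-\mu}) = \ell(t_{-\mu}) - \ell(w)$ is already the minimal length in the coset $w t_{-\mu}W_0$, as that coset equals $t_{-w(\mu)}W_0$ and the minimal length in $t_{-\nu}W_0$ for any $\nu$ in the $W_0$-orbit of a dominant $\mu$ is $\ell(t_{-\mu}) - \ell(w_\nu)$ where $w_\nu \in W_0^\mu$ is the minimal element with $w_\nu(\mu) = \nu$.

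Putting this together: $(t_{-w(\mu)})_{\bf 0}$ is by definition the right $W_0$-minimal element of $t_{-w(\mu)}W_0$; I will have exhibited $t_{-w(\mu)}w = wt_{-\mu}$ as an element of this coset with length $\ell(t_{-\mu}) - \ell(w)$, and shown this is the minimal possible length in the coset and that this element is right $W_0$-reduced. Since right $W_0$-minimal representatives are unique, $(t_{-w(\mu)})_{\bf 0} = t_{-w(\mu)}w$, and the length formula $\ell((t_{-w(\mu)})_{\bf 0}) = \ell(t_{-\mu}) - \ell(w)$ follows immediately.

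\textbf{Main obstacle.} The delicate point is the precise bookkeeping of the length function on the \emph{extended} affine Weyl group $W = W_{\rm aff}\rtimes\Omega$ under the Bruhat--Tits sign convention (the paper's $t^\lambda$ acting as $t_{-\lambda}$), specifically verifying that for $w \in W_0^\mu$ and $\mu$ dominant, a reduced word for $w$ occurs as a left prefix of some reduced word for $t_{-\mu}$ — equivalently the additivity $\ell(wt_{-\mu}) = \ell(w) + \ell(t_{-\mu}) - 2\#\{\ldots\}$ collapses to $\ell(t_{-\mu}) - \ell(w)$. This hinges on the characterization of $W_0^\mu$ as exactly the $w$ for which $w^{-1}$ sends every simple root not orthogonal to $\mu$ appropriately, and on $W_{0,\mu}$ being the parabolic stabilizer; I expect this to be a short but careful root-system computation, plausibly already packaged in a lemma of \cite{Hai24} or standard references on the Iwahori--Weyl group, which I would cite rather than reprove.
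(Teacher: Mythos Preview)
Your overall plan is sound, but the execution has a genuine gap: the argument for right $W_0$-minimality is circular. You assert that $\ell(wt_{-\mu}) = \ell(t_{-\mu}) - \ell(w)$ is ``already the minimal length in the coset $t_{-w(\mu)}W_0$'' because ``the minimal length in $t_{-\nu}W_0$ \dots\ is $\ell(t_{-\mu}) - \ell(w_\nu)$'' --- but that last claim \emph{is} the lemma. Knowing the length of $wt_{-\mu}$ alone does not tell you it is coset-minimal; you still need to rule out some other $v \in W_0$ with $\ell(t_{-w(\mu)}v)$ smaller. Likewise, the ``reduced word for $t_{-\mu}$ begins with a reduced word for $w^{-1}$'' step is asserted, not proved; it is equivalent to the length additivity $\ell(w^{-1}) + \ell(wt_{-\mu}) = \ell(t_{-\mu})$, which is exactly what you are trying to establish, so deferring it to a citation amounts to outsourcing the content of the lemma.

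The paper avoids all of this by checking the minimality criterion directly: setting $x = t_{-w(\mu)}w$, it shows $x < x s_\alpha$ for every positive root $\alpha$ by computing the affine functional $x\alpha = \alpha \circ x^{-1}$ on the base alcove ${\bf a}$. This reduces to checking that $\langle \alpha, \mu \rangle + \langle w\alpha, {\bf a} \rangle \subset (0,\infty)$, which splits into two easy cases: if $\langle \alpha,\mu\rangle \geq 1$ it is automatic since $\langle w\alpha, {\bf a}\rangle \subset (-1,1)$; if $\langle \alpha,\mu\rangle = 0$ then $s_\alpha \in W_{0,\mu}$, so the hypothesis $w \in W_0^\mu$ forces $w\alpha > 0$. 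The length formula then follows \emph{a posteriori} from right $W_0$-minimality via $\ell(t_{-w(\mu)}) = \ell(x \cdot w^{-1}) = \ell(x) + \ell(w)$, not the other way around. This direct affine-root check is both shorter and logically cleaner than routing through length identities; I would recommend you adopt it.
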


\begin{proof}
We need to show that for any positive root $\alpha$ with reflection $s_\alpha$, we have $x < x s_\alpha$ in the Bruhat order if $x = t_{-w(\mu)}w$. But $x < xs_\alpha$ if and only if $x\alpha = \alpha \circ x^{-1}(-)$ takes positive values on ${\bf a}$, which holds if and only if
\begin{equation} \label{positive_values}
\langle \alpha, \mu \rangle + \langle w\alpha, {\bf a} \rangle \subset (0, \infty).
\end{equation}

\noindent {\bf Case 1:} $\langle \alpha, \mu \rangle \geq 1$.  Then (\ref{positive_values}) holds, using that $\langle w\alpha, {\bf a} \rangle \subset (-1,1)$.

\noindent {\bf Case 2:} $\langle \alpha, \mu \rangle  = 0$.  In this case $s_\alpha \in W_{0, \mu}$, and then we have $w < ws_\alpha$, or equivalently, $w\alpha > 0$. This implies that (\ref{positive_values}) holds.
\end{proof}

\subsection{Alcove walk model for parabolic Mirkovi\'c-Vilonen intersections}

\begin{dfn} \label{P_b_mu_dfn}
Suppose ${\bf b} \subset \mathcal A_T$ be any alcove,  and fix $\mu \in X_*(T)^+$ and $\lambda \in X_*(T)^{+_M}$. We define:
$$
\mathcal P^{{\bf b}}_\mu(\lambda) =  \coprod_{w \in W_0/W_{0,-\mu}} \coprod_{w' \in W_0} \coprod_{w'' \in W_{M,0}/W_{M,0,-\lambda}} \mathcal P^{{\bf b}}_{(wt_{-\mu})_{\bf 0}}(t_{-w''(\lambda)}w').
$$
\end{dfn}

For each $w \in W_0/W_{0,-\mu}$ and each $w'' \in W_{M,0}/W_{M,0,-\lambda}$, we think of 
$$
\mathcal P^{\bf b}_{(t_{-w(\mu)})_{\bf 0}}(-w''(\lambda)) := \coprod_{w' \in W_0} \mathcal P^{{\bf b}}_{(wt_{-\mu})_{\bf 0}}(t_{-w''(\lambda)}w')
$$
as the set of alcove walks from the base alcove ${\bf a}$ to the vertex $-w''(\lambda) + {\bf 0}$, having type given by $(t_{-w(\mu)})_{\bf 0}$, such that the alcove walk is ${\bf b}$-positively folded. An alcove walk terminating in a vertex is by definition one that terminates in any alcove which has that vertex in its closure.  If the vertex is $-w''(\lambda) + {\bf 0}$, then the set of alcoves sharing that vertex is precisely $\{ t_{-w''(\lambda)} w'{\bf a} \, | \, w' \in W_0\}$.

Now fix $P=MN$ as above and let ${\bf b} = {\bf a}_{I_P}$ be an alcove as above, of the form ${\bf a}_{I_P} = \nu + {\bf a}$, where $\nu$ is $M$-central and sufficiently $N$-dominant. The following theorem is the first main result of this article. It makes explicit the results in \cite[Cor.\,10.2]{Hai25}, and follows directly from equation (\ref{refined_union_I}) and Lemma \ref{base_case_decomp}.

\begin{thm} \label{Thm_A_body}
The \textup{(}reduced\textup{)} parabolic Mirkovi\'c-Vilonen intersection can be explicitly paved by locally closed $k$-schemes of the following form
\begin{equation} \label{MV_AW_eq}
K_P t^\lambda K/K \, \cap \, Kt^\mu K/K \cong \bigsqcup_{{\bf a}_\bullet \in \mathcal P^{{\bf a}_{I_P}}_{\mu}(\lambda)} \mathbb A_k^{c^+({\bf a}_\bullet)} \times (\mathbb A^1_k - \mathbb A^0_k)^{f^+({\bf a}_\bullet)}.
\end{equation}
\end{thm}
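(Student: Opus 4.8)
The plan is to obtain Theorem \ref{Thm_A_body} by purely formal bookkeeping from the two ingredients singled out just before its statement: the refined paving (\ref{refined_union_I}) of Lemma \ref{key_P-MV_lem} and the Iwahori-level cellular paving of Lemma \ref{base_case_decomp}. First I would fix the standard parabolic $P = MN$ and the alcove ${\bf a}_{I_P} = \nu + {\bf a}$ with $\nu$ $M$-central and sufficiently $N$-dominant, so that $\rho_{{\bf a}_{I_P},\mathcal A_T}$ really is the retraction attached to $I_P$ in Lemma \ref{I_P_vs_retraction} and Proposition \ref{HKM_prop} is in force --- this is exactly the regime in which (\ref{refined_union_I}) was derived. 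Invoking that identity gives, up to isomorphism of reduced $k$-schemes,
$$
K_P t^\lambda K/K \,\cap\, Kt^\mu K/K \;\cong\; \bigsqcup_{w \in W_0/W_{0,-\mu}}\;\bigsqcup_{w' \in W_0}\; I_P\, t_{-\lambda_w} w'\, I/I \,\cap\, I\,(wt_{-\mu})_{\bf 0}\, I/I,
$$
a finite disjoint union of intersections of $I_P$- and $I$-orbits inside ${\rm Fl}_G$.

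Next I would apply Lemma \ref{base_case_decomp} to each term, with $x := (wt_{-\mu})_{\bf 0}$ and $y := t_{-\lambda_w}w'$. The only point that needs a word is that this lemma requires $x$ and $y$ to share an $\Omega$-component: this is automatic whenever the term is nonempty, since a nonempty term forces $\mu \equiv \lambda$ modulo the coroot lattice (this is just the condition that $Kt^\mu K/K$ and $K_Pt^\lambda K/K$ meet the same connected component of ${\rm Gr}_G$), and $w, w' \in W_0 \subset W_{\rm aff}$ do not change $\Omega$-components; in the remaining cases both the term and the corresponding set $\mathcal P^{{\bf a}_{I_P}}_{(wt_{-\mu})_{\bf 0}}(t_{-\lambda_w}w')$ are empty. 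So, after fixing once and for all a reduced word for each element $(wt_{-\mu})_{\bf 0}$, Lemma \ref{base_case_decomp} yields for every $(w,w')$ a paving
$$
I_P\, t_{-\lambda_w} w'\, I/I \,\cap\, I\,(wt_{-\mu})_{\bf 0}\, I/I \;\cong\; \bigsqcup_{{\bf a}_\bullet \in \mathcal P^{{\bf a}_{I_P}}_{(wt_{-\mu})_{\bf 0}}(t_{-\lambda_w}w')} \mathbb A_k^{c^+({\bf a}_\bullet)} \times (\mathbb A^1_k - \mathbb A^0_k)^{f^+({\bf a}_\bullet)}.
$$

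Finally I would substitute these into the displayed union and collect the (disjoint) indexing sets over $w \in W_0/W_{0,-\mu}$ and $w' \in W_0$. By Definition \ref{P_b_mu_dfn} the resulting total index set is exactly $\mathcal P^{{\bf a}_{I_P}}_\mu(\lambda)$, and regrouping the $w'$-sum according to which alcove $t_{-\lambda_w}w'{\bf a}$ around the vertex $-\lambda_w + {\bf 0}$ a given alcove walk terminates in recovers the ``walks terminating at $-\lambda_w + {\bf 0}$'' reformulation used in the Introduction. This produces (\ref{MV_AW_eq}). As elsewhere in the paper, I would explicitly disclaim any statement about the closure relations among the locally closed pieces beyond what Lemma \ref{base_case_decomp} already provides. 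I do not anticipate a genuine obstacle here: all the geometric content lives in Lemmas \ref{key_P-MV_lem} and \ref{base_case_decomp}, and the only care required is the $\Omega$-component compatibility above together with keeping the reindexing faithful to Definition \ref{P_b_mu_dfn}.
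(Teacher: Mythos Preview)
Your proposal is correct and matches the paper's own proof essentially verbatim: the paper states that the theorem ``follows directly from equation (\ref{refined_union_I}) and Lemma \ref{base_case_decomp},'' which is exactly the two-step combination you carry out, together with the reindexing via Definition \ref{P_b_mu_dfn}. Your additional remark on the $\Omega$-component compatibility is a harmless elaboration of a point the paper leaves implicit.
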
 
This theorem gives us an alcove-walk description of the parabolic Mirkovi\'c-Vilonen intersections. Using Proposition \ref{K_P_upper_bd_prop} we deduce:

\begin{cor} \label{dim_bound_P}
For any ${\bf a}_\bullet \in \mathcal P^{{\bf a}_{I_P}}_\mu(\lambda)$, we have ${\rm dim}({\bf a}_\bullet) \leq \langle \rho, \mu + \lambda \rangle$. 
\end{cor}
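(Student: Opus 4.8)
The plan is to combine the explicit paving of Theorem \ref{Thm_A_body} with the a priori dimension bound of Proposition \ref{K_P_upper_bd_prop}. The point is that the right hand side of (\ref{MV_AW_eq}) is a disjoint union of locally closed subschemes $\mathbb A_k^{c^+({\bf a}_\bullet)} \times (\mathbb A^1_k - \mathbb A^0_k)^{f^+({\bf a}_\bullet)}$, and the dimension of the piece indexed by ${\bf a}_\bullet$ is exactly $c^+({\bf a}_\bullet) + f^+({\bf a}_\bullet) = {\rm dim}({\bf a}_\bullet)$ (the factor $\mathbb A^1_k - \mathbb A^0_k = \mathbb G_{m,k}$ being one-dimensional). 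Hence each such piece is a locally closed subscheme of $K_P t^\lambda K/K \, \cap \, Kt^\mu K/K$ of dimension ${\rm dim}({\bf a}_\bullet)$.

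First I would recall that for a finite-type $k$-scheme $X$ and any locally closed subscheme $Z \subset X$, one has ${\rm dim}(Z) \leq {\rm dim}(X)$: indeed the closure $\overline{Z}$ in $X$ is a closed subscheme with ${\rm dim}(\overline{Z}) = {\rm dim}(Z)$, and closed subschemes have dimension bounded by that of $X$. Applying this with $X = K_P t^\lambda K/K \, \cap \, Kt^\mu K/K$ and $Z$ the piece indexed by a given ${\bf a}_\bullet \in \mathcal P^{{\bf a}_{I_P}}_\mu(\lambda)$ gives
$$
{\rm dim}({\bf a}_\bullet) = c^+({\bf a}_\bullet) + f^+({\bf a}_\bullet) \leq {\rm dim}\big(K_P t^\lambda K/K \, \cap \, Kt^\mu K/K\big).
$$
By Proposition \ref{K_P_upper_bd_prop} the right hand side is at most $\langle \rho, \mu + \lambda \rangle$, and chaining the two inequalities yields ${\rm dim}({\bf a}_\bullet) \leq \langle \rho, \mu + \lambda \rangle$, which is exactly the claim.

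One small point to address is the (potential) emptiness issue: if $\mathcal P^{{\bf a}_{I_P}}_\mu(\lambda) = \emptyset$ there is nothing to prove, and if it is nonempty the intersection $K_P t^\lambda K/K \, \cap \, Kt^\mu K/K$ is nonempty, so it has a well-defined (non-negative integer) dimension and Proposition \ref{K_P_upper_bd_prop} applies. I do not anticipate a genuine obstacle here; the only thing requiring a moment's care is making sure the identification in Theorem \ref{Thm_A_body} is as schemes (so that the locally closed pieces really are subschemes of the intersection whose scheme-theoretic dimension is $c^+({\bf a}_\bullet) + f^+({\bf a}_\bullet)$), which is precisely what "cellular paving" provides. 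This is a short deduction rather than a substantive argument — the real content already lives in Theorem \ref{Thm_A_body} and Proposition \ref{K_P_upper_bd_prop}.
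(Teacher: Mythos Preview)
Your proposal is correct and follows exactly the paper's own approach: the corollary is stated immediately after Theorem \ref{Thm_A_body} with the sentence ``Using Proposition \ref{K_P_upper_bd_prop} we deduce,'' i.e.\ combine the cellular paving with the a priori dimension bound. Your write-up simply spells out the straightforward dimension-comparison step that the paper leaves implicit.
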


It is now natural to define the set of {\em maximal dimension} alcove walks in $\mathcal P^{{\bf a}_{I_P}}_\mu(\lambda)$ to be
\begin{equation} \label{max_a_I_P_def}
\mathcal M^{{\bf a}_{I_P}}_\mu(\lambda) = \Big\{ {\bf a}_\bullet \in \mathcal P^{{\bf a}_{I_P}}_\mu(\lambda)  ~ | ~ {\rm dim}({\bf a}_\bullet) = \langle \rho, \mu + \lambda \rangle \Big\}.
\end{equation}
The next result is a characterization of the set $\mathcal M^{{\bf a}_{I_P}}_\mu(\lambda)$ inside the set  $\mathcal P^{{\bf a}_{I_P}}_\mu(\lambda)$.

\begin{cor} If ${\bf a}_\bullet \in \mathcal P^{{\bf a}_{I_P}}_{(t_{-w(\mu)})_{\bf 0}}(-w''(\lambda))$, then 
$$
c^-({\bf a}_\bullet) \geq \ell((t_{-w(\mu)})_{\bf 0}) - \langle \rho, \mu + \lambda \rangle,
$$
and 
${\bf a}_\bullet \in  \mathcal M^{{\bf a}_{I_P}}_\mu(\lambda)$ if and only if
$$
c^-({\bf a}_\bullet) =  \ell((t_{-w(\mu)})_{\bf 0}) -  \langle \rho, \mu + \lambda \rangle.
$$
\end{cor}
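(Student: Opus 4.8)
The final corollary relates the maximal-dimension condition to a count of negative crossings. The plan is to use the length formula from Lemma \ref{type_elements} together with the dimension decomposition $r = c^+ + c^- + f^+$ and the fact that, for a walk of type $(t_{-w(\mu)})_{\bf 0}$, the number of steps $r$ equals $\ell((t_{-w(\mu)})_{\bf 0})$.

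\medskip

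\noindent{\bf Proof proposal.} First I would record that any alcove walk ${\bf a}_\bullet \in \mathcal P^{{\bf a}_{I_P}}_{(t_{-w(\mu)})_{\bf 0}}(t_{-\lambda_w}w')$ has, by definition, type given by a reduced expression for $(t_{-w(\mu)})_{\bf 0}$; hence its number of steps is $r = \ell((t_{-w(\mu)})_{\bf 0})$. Combining this with the decomposition $r = c^+({\bf a}_\bullet) + c^-({\bf a}_\bullet) + f^+({\bf a}_\bullet)$ from the Lemma on page preceding, and with the definition ${\rm dim}({\bf a}_\bullet) = c^+({\bf a}_\bullet) + f^+({\bf a}_\bullet)$, we get the identity
$$
{\rm dim}({\bf a}_\bullet) = \ell((t_{-w(\mu)})_{\bf 0}) - c^-({\bf a}_\bullet).
$$
Then Corollary \ref{dim_bound_P} says ${\rm dim}({\bf a}_\bullet) \leq \langle \rho, \mu + \lambda \rangle$, which upon substituting the identity and rearranging yields exactly the claimed inequality $c^-({\bf a}_\bullet) \geq \ell((t_{-w(\mu)})_{\bf 0}) - \langle \rho, \mu + \lambda \rangle$, and equality in one is equivalent to equality in the other, i.e.\ to ${\bf a}_\bullet \in \mathcal M^{{\bf a}_{I_P}}_\mu(\lambda)$.

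\medskip

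The only genuinely substantive point to verify is the assertion that $r = \ell((t_{-w(\mu)})_{\bf 0})$, i.e.\ that the reduced expression chosen to define the type of the alcove walk indeed has length equal to the length of $(t_{-w(\mu)})_{\bf 0}$. This is immediate from the meaning of ``reduced expression,'' but I would make sure the bookkeeping is consistent: in Definition \ref{P_b_mu_dfn} the walks in $\mathcal P^{{\bf b}}_{(wt_{-\mu})_{\bf 0}}(t_{-\lambda_w}w')$ have type $(t_{-w(\mu)})_{\bf 0}$, and a reduced word for this element has exactly $\ell((t_{-w(\mu)})_{\bf 0})$ letters (plus the length-zero $\Omega$-part which contributes no crossings or foldings). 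So the count of crossings-plus-foldings is precisely this length. One might optionally remark, via Lemma \ref{type_elements}, that when $w \in W_0^\mu$ this equals $\ell(t_{-\mu}) - \ell(w) = 2\langle \rho, \mu\rangle - \ell(w)$, making the inequality fully explicit, though this is not needed for the statement.

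\medskip

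The main obstacle, such as it is, is purely one of cross-referencing rather than mathematics: one must invoke the right earlier statements (the length decomposition Lemma, the definitions of ${\rm dim}$, $c^\pm$, $f^+$, and Corollary \ref{dim_bound_P}) and confirm that the type of the walks appearing in $\mathcal P^{{\bf a}_{I_P}}_{(t_{-w(\mu)})_{\bf 0}}(t_{-\lambda_w}w')$ is reduced. Once that is in place the corollary is a one-line algebraic manipulation, so I do not anticipate any real difficulty.
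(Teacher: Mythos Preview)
Your proposal is correct and matches the paper's approach exactly: the paper states this corollary without proof, as an immediate consequence of Corollary \ref{dim_bound_P} together with the identity $r = c^+({\bf a}_\bullet) + c^-({\bf a}_\bullet) + f^+({\bf a}_\bullet)$ and the definition of $\mathcal M^{{\bf a}_{I_P}}_\mu(\lambda)$. Your write-up simply makes explicit the one-line manipulation ${\rm dim}({\bf a}_\bullet) = \ell((t_{-w(\mu)})_{\bf 0}) - c^-({\bf a}_\bullet)$ that the paper leaves to the reader.
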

\begin{rmk} In some cases the quantity $ \ell((t_{-w(\mu)})_{\bf 0}) -  \langle \rho, \mu + \lambda \rangle$ is negative (see Proposition \ref{appendix_prop}). Of course if that happens it cannot coincide with $c^-({\bf a}_\bullet) \geq 0$, and such $w$ will not contribute to the set  $\mathcal M^{{\bf a}_{I_P}}_\mu(\lambda)$.
\end{rmk}

\begin{rmk}
We can write $\ell((t_{-w(\mu)})_{\bf 0}) = \langle \rho, 2\mu \rangle - \epsilon_{\mu,w}$ for a unique integer $\epsilon_{\mu,w}$ with $0 \leq \epsilon_{\mu,w} \leq \ell(w_0)$, where $w_0 \in W_0$ is the longest element (see Lemma \ref{type_elements}).  Then we may rewrite the above: ${\bf a}_\bullet \in  \mathcal P^{{\bf a}_{I_P}}_{(t_{-w(\mu)})_{\bf 0}}(-w''(\lambda))$ always satisfies 
$$
c^-({\bf a}_\bullet) \geq \langle \rho, \mu - \lambda \rangle - \epsilon_{\mu,w},
$$
 and ${\bf a}_\bullet \in \mathcal M^{{\bf a}_{I_P}}_\mu(\lambda)$ if and only if equality holds.
\end{rmk}

When ${\bf a}_\bullet \in \mathcal M^{{\bf a}_{I_P}}_\mu(\lambda)$ then it corresponds to an irreducible component $C_{{\bf a}_\bullet} $ in the parabolic Mirkovi\'c-Vilonen variety $K_Pt^\lambda K/K \, \cap \, Kt^\mu K/K$, and $C_{{\bf a}_\bullet}$ contains a variety isomorphic to the product $\mathbb A_k^{c^+({\bf a}_\bullet)} \times (\mathbb A^1_k - \mathbb A^0_k)^{f^+({\bf a}_\bullet)}$ as an open dense subvariety. The following is the second main result of this article, and it rephrases the above discussion.

\begin{thm} \label{main_thm_P}
 Let $\mu \in X_*(T)^+$ and $\lambda \in X_*(T)^{+_M}$ and assume $\mu - \lambda$ is in the coroot lattice for $G$. Then: 
\begin{enumerate}
\item[(1)] There is a bijection ${\bf a}_\bullet \mapsto C_{{\bf a}_\bullet}$ 
\begin{equation} \label{bij_P_1}
\mathcal M^{{\bf a}_{I_P}}_\mu(\lambda) ~~ \overset{\sim}{\longrightarrow} ~~ {\rm Irred}^{\langle \rho, \mu + \lambda \rangle}(K_Pt^\lambda K/K \, \cap \, Kt^\mu K/K).
\end{equation} 
\item[(2)] The multiplicity $[V^{\widehat{G}}_\mu \, : \, V^{\widehat{M}}_\lambda]$ is the number of ${\bf a}_{I_P}$-positively folded alcove walks  ${\bf a}_\bullet$ of type $(t_{-w\mu})_{\bf 0}$ for some $w \in W_0/W_{0,-\mu}$,  joining ${\bf a}$ to $-\lambda_w + {\bf 0}$, and with dimension $c^+({\bf a}_\bullet) + f^+({\bf a}_\bullet)$ equal to the maximum possible value $\langle \rho, \mu + \lambda \rangle$. 
\end{enumerate}
\end{thm}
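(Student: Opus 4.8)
The plan is to assemble the statement from three ingredients already in place: the explicit paving of Theorem \ref{Thm_A_body}, the a priori bound and top-component count of Proposition \ref{K_P_upper_bd_prop}, and a dimension-theoretic bookkeeping lemma about pavings. First I would recall that by Theorem \ref{Thm_A_body} we have a paving
\[
K_P t^\lambda K/K \,\cap\, K t^\mu K/K \;\cong\; \bigsqcup_{{\bf a}_\bullet \in \mathcal P^{{\bf a}_{I_P}}_\mu(\lambda)} \mathbb A_k^{c^+({\bf a}_\bullet)} \times (\mathbb A^1_k - \mathbb A^0_k)^{f^+({\bf a}_\bullet)},
\]
in which each stratum $Z_{{\bf a}_\bullet} := \mathbb A_k^{c^+({\bf a}_\bullet)} \times (\mathbb A^1_k - \mathbb A^0_k)^{f^+({\bf a}_\bullet)}$ is irreducible of dimension $\dim({\bf a}_\bullet) = c^+({\bf a}_\bullet) + f^+({\bf a}_\bullet)$. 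The general principle I would invoke is: if a finite-type reduced scheme $X$ is paved by irreducible locally closed subsets $Z_i$, then every irreducible component of $X$ is the closure $\overline{Z_i}$ of one of the (inclusion-)maximal strata, and an irreducible component has dimension $d$ exactly when it equals $\overline{Z_i}$ for some $Z_i$ with $\dim Z_i = d$ that is not contained in the closure of any other stratum. Combined with Corollary \ref{dim_bound_P} — which says $\dim({\bf a}_\bullet) \le \langle\rho,\mu+\lambda\rangle$ for all ${\bf a}_\bullet$ — this already shows that the closures of the strata indexed by $\mathcal M^{{\bf a}_{I_P}}_\mu(\lambda)$ account for \emph{all} the top-dimensional irreducible components, because a stratum of maximal dimension $\langle\rho,\mu+\lambda\rangle$ cannot be contained in the closure of any other stratum (which has dimension at most $\langle\rho,\mu+\lambda\rangle$, and if equal would have to be the same stratum since distinct irreducibles of the same dimension can't be nested). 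Hence ${\bf a}_\bullet \mapsto \overline{Z_{{\bf a}_\bullet}} =: C_{{\bf a}_\bullet}$ gives a well-defined injection $\mathcal M^{{\bf a}_{I_P}}_\mu(\lambda) \to {\rm Irred}^{\langle\rho,\mu+\lambda\rangle}(K_Pt^\lambda K/K \cap Kt^\mu K/K)$, and conversely every top-dimensional component arises this way, so the map is a bijection. This proves (1).

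For part (2), the cardinality $\#\mathcal M^{{\bf a}_{I_P}}_\mu(\lambda)$ equals $\#{\rm Irred}^{\langle\rho,\mu+\lambda\rangle}(K_Pt^\lambda K/K \cap Kt^\mu K/K)$ by the bijection in (1), and the latter equals $[V^{\widehat G}_\mu : V^{\widehat M}_\lambda]$ by the second assertion of Proposition \ref{K_P_upper_bd_prop}. The reformulation in the displayed statement of (2) — "alcove walks of type $(t_{-w\mu})_{\bf 0}$ for some $w \in W_0/W_{0,-\mu}$ joining ${\bf a}$ to $-\lambda_w + {\bf 0}$ of dimension $\langle\rho,\mu+\lambda\rangle$" — is simply the unwinding of the definition of $\mathcal P^{{\bf a}_{I_P}}_\mu(\lambda)$ in Definition \ref{P_b_mu_dfn} together with the interpretation of $\coprod_{w'} \mathcal P^{{\bf b}}_{(wt_{-\mu})_{\bf 0}}(t_{-\lambda_w}w')$ as alcove walks terminating at the vertex $-\lambda_w + {\bf 0}$, so nothing further is needed there. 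The hypothesis that $\mu - \lambda$ lies in the coroot lattice is the standard nonemptiness/compatibility condition (when it fails both sides are zero), and I would note it only enters to guarantee the intersection is nonempty and $\langle\rho,\mu+\lambda\rangle$ is attained.

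The main obstacle I anticipate is the purely scheme-theoretic lemma underpinning (1): one must be careful that a "paving" in the sense used here (a filtration by closed subschemes with locally closed successive quotients, with \emph{unspecified} closure relations among the pieces) still has the property that irreducible components correspond to maximal strata and that dimensions are computed stratum-by-stratum. This is true and elementary — the underlying topological space of $X$ is the finite union of the (irreducible) $Z_i$, so its irreducible components are among the $\overline{Z_i}$, and $\dim X = \max_i \dim Z_i$ — but I would state and prove it explicitly as a lemma rather than wave at it, since the bijectivity in (1) (in particular that no two distinct maximal-dimension strata have the same closure, which uses that the $Z_i$ are disjoint) rests on it. Everything else is a citation-level appeal to the results established earlier in the paper.
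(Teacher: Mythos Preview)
Your proposal is correct and follows essentially the same route as the paper: the theorem is stated there as a rephrasing of the discussion immediately preceding it, which combines Theorem \ref{Thm_A_body}, Corollary \ref{dim_bound_P}, and Proposition \ref{K_P_upper_bd_prop} exactly as you do. Your explicit articulation of the underlying paving/irreducible-component bookkeeping (that maximal-dimension strata in a paving by irreducibles give distinct top-dimensional components) is in fact more detailed than what the paper writes, which simply asserts that each ${\bf a}_\bullet \in \mathcal M^{{\bf a}_{I_P}}_\mu(\lambda)$ ``corresponds to an irreducible component $C_{{\bf a}_\bullet}$'' containing the associated cell as an open dense subvariety.
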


\begin{rmk}
There are nonempty intersections $K_P t^\lambda K/K \cap K t^\mu K/K$ which have dimension strictly less than the upper bound $\langle \rho, \mu + \lambda \rangle$ (see Example \ref{branching_counterexample} below).   In favorable cases, however, the nonempty intersections are equidimensional of dimension $\langle \rho, \mu + \lambda \rangle$. This holds when $P = B$ (see Theorem \ref{main_thm_B}); it also holds for all parabolic subgroups $P$ of $G = {\rm GL}_n$, and of more general groups $G$ whenever $\mu$ happens to be a sum of minuscule cocharacters (this follows from \cite[Thm.\,12.7]{HKM} and \cite[Lem.\,9.3]{HKM}).
\end{rmk}

\begin{ex}\label{branching_counterexample}  In order to find a group $G$, a Levi subgroup $M$, and $\mu, \lambda$ such that the intersection $K_P t^\lambda K/K \, \cap \, K t^\mu K/K$ has dimension smaller than $\langle \rho, \mu + \lambda \rangle$, it is enough to find $\widehat{G} \supset \widehat{M} \supset \widehat{T}$ and $\widehat{T}$-weights $\mu, \lambda$ such that $\lambda$ appears in $V^{\widehat{G}}_{\mu} |_{\widehat{T}}$ but $V^{\widehat{M}}_{\lambda}$ does not appear in $V^{\widehat{G}}_\mu|_{\widehat{M}}$.  I thank Jeffrey Adams for finding the following example, which is perhaps the simplest possible. 

Let $\widehat{G} = {\rm Sp}(4)$, and denote the simple roots following the Bourbaki numbering as $\alpha_1 = \epsilon_1 - \epsilon_2$ and $\alpha_2 = 2\epsilon_2$.  Let $\mu = \bar{\omega}_2 = \epsilon_1 + \epsilon_2$, the second fundamental weight, which is the unique quasi-minuscule weight. Let $\lambda = 0$. The quasi-minuscule representation $V^{\widehat{G}}_\mu$ of ${\rm Sp}(4)$ has dimension 5, and the weights are the four elements in the Weyl group orbit of $\mu$, along with the origin $\lambda = 0$, with multiplicity 1. Let $\widehat{M}$ be the Levi subgroup whose simple root is $\alpha_1$. Then
$$
V^{\widehat{G}}_{\mu}|_{\widehat{M}} = W_1 \oplus W_2 \oplus W_3,
$$
where $W_1$ is the 1-dimensional representation of $\widehat{M}$ with weight $\mu$, $W_2$ is the 1-dimensional representation of $\widehat{M}$ with weight $-\mu$, and $W_3$ is the irreducible 3-dimensional representation of $\widehat{M}$ with weights $s_{\alpha_2}(\mu) = \alpha_1$, $0$, and $s_{\alpha_1}s_{\alpha_2}(\mu) = -\alpha_1$.  Note that $\lambda = 0$ appears as an $\widehat{M}$-dominant weight, but the trivial representation $V^{\widehat{M}}_\lambda$ does not appear in $V^{\widehat{G}}_{\mu}|_{\widehat{M}}$.

Theorem \ref{main_thm_P} also recovers this result: with the choices above, the set $\mathcal M^{{\bf a}_{I_P}}_\mu(\lambda)$ is easily seen to be empty.  The above analysis does not prove that the intersection is actually nonempty.  But Theorem \ref{Thm_A_body} provides this, as it is also easily seen in this case that $\mathcal P^{{\bf a}_{I_P}}_\mu(\lambda)$ is nonempty.
\end{ex}

\subsection{Example: the Mirkovi\'c-Vilonen intersections and weight multiplicities}

We explicate the above theorem in the base $P= B$ and thus $M=T$ and $K_P = LU K_T$, $I_P = LU I_T$, and ${\bf a}_{I_P}$ is any alcove ${\bf a}_U$ which is sufficiently deep inside the dominant Weyl chamber $\mathcal C$. We consider any $\mu \in X_*(T)^+$ and $\lambda \in X_*(T)$. Moreover 
$$
[V^{\widehat{G}}_\mu \, : \, V^{\widehat{T}}_\lambda] = {\rm dim}\,V_\mu(\lambda)
$$
where $V_\mu(\lambda)$ is the $\lambda$-weight space in $V_\mu$. Finally, it is well-known that whenever it is nonempty (i.e., whenever $\lambda$ appears as a $\widehat{T}$-weight in $V^{\widehat{G}}_\mu$), the Mirkovi\'c-Vilonen intersection $LU t^\lambda K/K \, \cap \, Kt^{\mu} K/K$ is equidimensional of dimension $\langle \rho, \mu + \lambda \rangle$ (\cite{NP01,MV07}; see also \cite{BaRi18}).  We deduce:

\begin{thm} \label{main_thm_B} Let $\mu \in X_*(T)^+$ and $\lambda \in X_*(T)$.  Assume $\mu - \lambda$ is in the coroot lattice for $G$. 
\begin{enumerate}
\item[(1)] There is a bijection ${\bf a}_\bullet \mapsto C_{{\bf a}_\bullet}$ 
\begin{equation} \label{bij_B_1}
\mathcal M^{{\bf a}_U}_\mu(\lambda) ~~ \overset{\sim}{\longrightarrow} ~~ {\rm Irred}(LUt^\lambda K/K \, \cap \, Kt^\mu K/K). \notag
\end{equation} 
\item[(2)] The weight multiplicity ${\rm dim}\,V_\mu(\lambda)$ is the number of ${\bf a}_U$-positively folded alcove walks  ${\bf a}_\bullet$ of type $(t_{-w\mu})_{\bf 0}$ for some $w \in W_0/W_{0,-\mu}$,  joining ${\bf a}$ to $-\lambda + {\bf 0}$, and having dimension $\langle \rho, \mu + \lambda \rangle$, or equivalently, having the number $c^-({\bf a}_\bullet)$ of ${\bf a}_U$-negative crossings the minimum possible, namely $c^-({\bf a}_\bullet) = \ell((t_{-w(\mu)})_{\bf 0}) -  \langle \rho, \mu + \lambda \rangle$.
\end{enumerate}
\end{thm}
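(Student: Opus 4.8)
The plan is to obtain Theorem~\ref{main_thm_B} as the specialization of Theorem~\ref{main_thm_P} to $P = B$, together with one classical input. First, record what happens when $M = T$: the group $W_{0,M}$ is trivial, so in the notation preceding Lemma~\ref{key_P-MV_lem} one has $\lambda_w = \lambda$ for every $w \in W_0/W_{0,-\mu}$; consequently Definition~\ref{P_b_mu_dfn} together with (\ref{max_a_I_P_def}) identifies $\mathcal M^{{\bf a}_U}_\mu(\lambda)$ with the set of ${\bf a}_U$-positively folded labeled alcove walks ${\bf a}_\bullet$ which have type $(t_{-w(\mu)})_{\bf 0}$ for some $w \in W_0/W_{0,-\mu}$, which run from the base alcove ${\bf a}$ to (an alcove containing) the vertex $-\lambda + {\bf 0}$, and for which ${\rm dim}({\bf a}_\bullet) = c^+({\bf a}_\bullet) + f^+({\bf a}_\bullet) = \langle \rho, \mu + \lambda \rangle$. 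The hypothesis that $\mu - \lambda$ lies in the coroot lattice of $G$ is inherited from Theorem~\ref{main_thm_P}; it is precisely the condition ensuring that $LUt^\lambda K/K$ and $Kt^\mu K/K$ sit in the same connected component of ${\rm Gr}_G$ and that $V_\mu(\lambda)$ can be nonzero.

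For part~(1), Theorem~\ref{main_thm_P}(1) already furnishes a bijection ${\bf a}_\bullet \mapsto C_{{\bf a}_\bullet}$ from $\mathcal M^{{\bf a}_U}_\mu(\lambda)$ onto ${\rm Irred}^{\langle \rho, \mu + \lambda \rangle}(LUt^\lambda K/K \cap Kt^\mu K/K)$, so the only thing left is to identify this target with the full set ${\rm Irred}(LUt^\lambda K/K \cap Kt^\mu K/K)$. That identification is exactly the assertion that $LUt^\lambda K/K \cap Kt^\mu K/K$ is equidimensional of dimension $\langle \rho, \mu + \lambda \rangle$, which is classical (\cite{NP01, MV07}; see also \cite{BaRi18}): equidimensionality forces every irreducible component to have dimension $\langle \rho, \mu + \lambda \rangle$, so passing to the superscripted subset discards nothing.

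For part~(2), Theorem~\ref{main_thm_P}(2) gives $[V^{\widehat G}_\mu : V^{\widehat T}_\lambda] = \#\mathcal M^{{\bf a}_U}_\mu(\lambda)$, and restricting $V^{\widehat G}_\mu$ to the maximal torus $\widehat T$ identifies the left-hand side with ${\rm dim}\,V_\mu(\lambda)$; combined with the concrete description of $\mathcal M^{{\bf a}_U}_\mu(\lambda)$ from the first paragraph this yields the first formulation of~(2). For the reformulation in terms of $c^-$, I would fix a reduced word for $(t_{-w(\mu)})_{\bf 0}$, of length $r = \ell((t_{-w(\mu)})_{\bf 0})$; since every alcove walk of that type satisfies $r = f^+({\bf a}_\bullet) + c^+({\bf a}_\bullet) + c^-({\bf a}_\bullet)$, one obtains ${\rm dim}({\bf a}_\bullet) = c^+({\bf a}_\bullet) + f^+({\bf a}_\bullet) = \ell((t_{-w(\mu)})_{\bf 0}) - c^-({\bf a}_\bullet)$, so ${\rm dim}({\bf a}_\bullet) = \langle \rho, \mu + \lambda \rangle$ holds if and only if $c^-({\bf a}_\bullet) = \ell((t_{-w(\mu)})_{\bf 0}) - \langle \rho, \mu + \lambda \rangle$. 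By Corollary~\ref{dim_bound_P} this value is the minimum possible for $c^-({\bf a}_\bullet)$ among walks of that type, and the length formula of Lemma~\ref{type_elements} allows one to rewrite it if desired.

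I do not expect a genuine obstacle here: the substance has already been carried by Theorems~\ref{Thm_A_body} and~\ref{main_thm_P}, and the sole external ingredient is the well-known equidimensionality of ordinary Mirkovi\'c-Vilonen intersections. The one point demanding a little care is purely bookkeeping — pinning down the translation between the index set $\mathcal M^{{\bf a}_U}_\mu(\lambda)$ of Definition~\ref{P_b_mu_dfn} and the concrete ``alcove walks from ${\bf a}$ to $-\lambda + {\bf 0}$'' description appearing in the statement, and verifying the clean identity ${\rm dim}({\bf a}_\bullet) = \ell((t_{-w(\mu)})_{\bf 0}) - c^-({\bf a}_\bullet)$ that makes the $c^-$ form of part~(2) transparent.
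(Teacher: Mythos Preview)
Your proposal is correct and follows essentially the same argument as the paper: specialize Theorem~\ref{main_thm_P} to $P=B$, note $\lambda_w=\lambda$ and $[V^{\widehat G}_\mu:V^{\widehat T}_\lambda]=\dim V_\mu(\lambda)$, and invoke the classical equidimensionality of MV intersections (\cite{NP01,MV07,BaRi18}) to upgrade ${\rm Irred}^{\langle\rho,\mu+\lambda\rangle}$ to ${\rm Irred}$. Your added justification of the $c^-$ reformulation via $r=f^+ + c^+ + c^-$ is the intended one and slightly more explicit than what the paper writes out.
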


\begin{rmk}
For given $\lambda$, there is only a limited set of types $(t_{-w(\mu)})_{\bf 0}$ we have to consider.  In fact it is easy to see that $w(\mu)$ contributes to $\calP^{{\bf a}_U}_\mu(\lambda)$ only if $-\lambda \preceq -w(\mu)$, where $\lambda_1 \preceq \lambda_2$ is defined to mean that $\lambda_2 - \lambda_1$ is a $\mathbb Z_{\geq 0}$-linear combination of simple coroots. This is illustrated by the example in $\S$\ref{A2_example}.
\end{rmk}

\subsection{The PRV conjecture for branching to Levi subgroups}

The PRV Conjecture for tensor product multiplicities was first proved independently by Kumar \cite{Kum90} and Mathieu \cite{Mat89}. We will use the alcove walk model to prove the (much easier) analogue of the PRV conjecture, for branching to Levi subgroups.  The author also found a more elementary proof using Stembridge's lemma, but the alcove walk proof is given because of useful side consequences.

\begin{prop}  \label{PRV_prop}
Suppose $\mu \in X_*(T)$ and $\lambda \in W\mu \cap X_*(T)^{+_M}$.  Then $[V^{\widehat{G}}_\mu : V^{\widehat{M}}_\lambda] = 1$.
\end{prop}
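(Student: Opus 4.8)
The plan is to use the alcove walk model of Theorem \ref{main_thm_P}(2): we must show that when $\lambda \in W\mu \cap X_*(T)^{+_M}$, there is exactly one ${\bf a}_{I_P}$-positively folded alcove walk of type $(t_{-w\mu})_{\bf 0}$ (for some $w \in W_0/W_{0,-\mu}$) joining ${\bf a}$ to $-\lambda_w + {\bf 0}$ and of maximal dimension $\langle \rho, \mu + \lambda \rangle$. The key observation is that since $\lambda$ is a Weyl conjugate of $\mu$, we have $\langle \rho, \mu + \lambda \rangle = \langle \rho, \mu \rangle + \langle \rho, \lambda \rangle$, and one should expect the ``straight-line'' alcove walk (the one with no foldings, i.e. $f^+({\bf a}_\bullet) = 0$) from ${\bf a}$ towards the vertex $-\lambda_w + {\bf 0}$ to be the unique maximal one. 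First I would pin down the relevant $w$: choosing $w \in W_0^\mu$ minimal so that $w(\mu) = $ (the appropriate $W_M$-conjugate placing $-w(\mu)+{\bf 0}$ in the same closed $M$-chamber as $-\lambda_w + {\bf 0}$), and observing that then $-\lambda_w = -w(\mu)$, so the target vertex is $-w(\mu) + {\bf 0}$ itself. By Lemma \ref{type_elements}, $\ell((t_{-w(\mu)})_{\bf 0}) = \ell(t_{-\mu}) - \ell(w) = \langle \rho, 2\mu \rangle - \ell(w)$.

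Next I would analyze the unfolded alcove walk. Take the unique minimal-length alcove walk in $\mathcal A_T$ from ${\bf a}$ to an alcove with $-w(\mu) + {\bf 0}$ in its closure; this realizes a reduced expression for $(t_{-w(\mu)})_{\bf 0}$ (this is exactly the content underlying the remarks after Lemma \ref{key_P-MV_lem}). For this walk $f^+ = 0$, $c^- = $ (number of steps crossing an ${\bf a}_{I_P}$-negative wall), and $c^+ + c^- = \ell((t_{-w(\mu)})_{\bf 0})$. Because $w$ was chosen in $W_0^\mu$, the minimal walk stays in a single $G$-chamber and the retraction $\rho_{{\bf a}_{I_P},\mathcal A_T}$ does not fold it — crucially, here ${\bf b} = {\bf a}_{I_P}$ is deep in a direction that is $M$-central and $N$-dominant, so its position relative to the walls is essentially that of a point deep in the dominant $G$-chamber adjusted by the parabolic. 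I would compute $c^-$ for this walk directly: the ${\bf a}_{I_P}$-negative crossings are precisely the steps that move ``toward'' ${\bf a}_{I_P}$, and a counting argument (comparing $\langle \rho, \cdot\rangle$ values along the walk, or using $\ell((t_{-w(\mu)})_{\bf 0}) - \langle \rho, \mu+\lambda\rangle = \langle \rho, \mu - \lambda\rangle - \epsilon_{\mu,w}$ from the Remark after Theorem \ref{main_thm_P}, together with $\lambda = w(\mu)$ forcing $\langle\rho,\mu-\lambda\rangle - \epsilon_{\mu,w}$ to match $\ell(w_0) - \ell(w)$ appropriately) shows the unfolded walk already has $c^-$ equal to the minimal value, hence lies in $\mathcal M^{{\bf a}_{I_P}}_\mu(\lambda)$.

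Then I would prove uniqueness: any other alcove walk in $\mathcal P^{{\bf a}_{I_P}}_{(t_{-w'(\mu)})_{\bf 0}}(-\lambda_{w'})$ of maximal dimension must have $f^+({\bf a}_\bullet) = 0$, because a folding contributes $1$ to $\dim({\bf a}_\bullet)$ but costs at least $1$ in $c^-$ relative to the unfolded model ending at the same vertex (a fold replaces a crossing-away by a stutter and the walk must still reach the same endpoint, forcing compensating negative crossings). An unfolded walk of type $(t_{-w'(\mu)})_{\bf 0}$ from ${\bf a}$ to the prescribed vertex is a reduced-word alcove walk with no repetition, hence literally the geodesic gallery; for fixed endpoint vertex $-\lambda_{w'}+{\bf 0}$ and fixed type there is a unique such geodesic, and one checks that only one coset $w'W_{0,-\mu}$ can produce a geodesic reaching $-\lambda_{w'}+{\bf 0}$ in $\mathcal A_T$ of the right length (the $W_M$-conjugate $\lambda$ of $\mu$ determines $w'$ up to $W_{0,-\mu}$). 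The main obstacle I anticipate is this last bookkeeping step: carefully ruling out spurious maximal-dimension folded walks and confirming that distinct $w' \in W_0/W_{0,-\mu}$ cannot each contribute a maximal unfolded walk — i.e. showing the two sources of potential multiplicity (foldings, and the sum over $w'$) both collapse. I would handle the folding exclusion via the dimension/$c^-$ inequality in the Corollary following Theorem \ref{Thm_A_body}, and the $w'$-collapse by the geometric fact that a geodesic in an apartment between two points is unique together with the characterization of $-\lambda_{w'}$.
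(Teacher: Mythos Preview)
Your overall strategy --- use Theorem \ref{main_thm_P}(2), identify the unfolded walk of type $(t_{-\lambda})_{\bf 0}$ (where $\lambda = w(\mu)$) as the contributor, then argue uniqueness --- is aligned with the paper's, but the uniqueness argument has real gaps, and the paper in fact avoids proving uniqueness directly for general $P$.

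First, the folding exclusion. You assert that a walk of maximal dimension must have $f^+({\bf a}_\bullet)=0$ because ``a folding contributes $1$ to $\dim({\bf a}_\bullet)$ but costs at least $1$ in $c^-$.'' This is not what happens: a fold \emph{replaces} a would-be negative crossing (contributing $0$) by an $f^+$ (contributing $1$), so locally the dimension goes \emph{up}. The global effect on reaching the prescribed vertex is subtle, and the Corollary you cite only gives the upper bound $\dim({\bf a}_\bullet)\le\langle\rho,\mu+\lambda\rangle$; it does not exclude folded walks from attaining it. The paper's argument here (carried out only for $M=T$) is quite different: it shows that any fold at a hyperplane \emph{not} coming from a finite root would send the endpoint strictly into the interior of the convex hull of $W_0(-\mu)$, hence away from $-w(\mu)+{\bf 0}$; it then chooses a particular reduced expression $\dot{(t_{-w_1(\mu)})_{\bf 0}}=\dot v\dot u\tau$ for which the unfolded walk has \emph{no} negative crossings through finite-root hyperplanes, so no fold is possible at all.

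Second, the ``$w'$-collapse.'' Different $w'$ give different target vertices $-\lambda_{w'}+{\bf 0}$, so geodesic uniqueness between two fixed points is not the relevant statement. In fact several $w'\in W_Mw\,W_{0,\mu}/W_{0,\mu}$ can satisfy $\lambda_{w'}=w'(\mu)$, and for each of these the unfolded walk of type $(t_{-w'(\mu)})_{\bf 0}$ terminates at the correct vertex; you would still need to show only one of these attains the maximal dimension. Your proposal does not address this.

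The paper sidesteps both issues for general $P$ by a two-step reduction. It first handles $M=T$ completely (by the convex-hull and reduced-word argument sketched above, organized as an induction on $\ell(w)$), obtaining as a byproduct Corollary \ref{separation_cor}: the unfolded walk for $(t_{-w(\mu)})_{\bf 0}$ has dimension exactly $\langle\rho,\mu+w(\mu)\rangle$. For general $M$ it then simply observes $[V^{\widehat G}_\mu:V^{\widehat M}_\lambda]\le \dim V^{\widehat G}_\mu(\lambda)=1$, so only \emph{existence} of one walk of the right dimension is needed; the unfolded walk for $(t_{-\lambda})_{\bf 0}$ is trivially ${\bf a}_{I_P}$-positively folded (it has no folds) and has the required dimension by the $M=T$ computation. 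Your existence computation is also left vague, and in the paper it is precisely this corollary that supplies it.
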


\begin{proof}
We shall use Theorem \ref{main_thm_P}(2). For each $w_1 \in W^\mu_0$ choose once and for all a reduced expression $\dot{(t_{-w_1(\mu)})_{\bf 0}}$. We first consider the case $M = T$.  Write $\lambda = w(\mu)$ for $w \in W_0^\mu$.  In this case we wish to prove that ${\rm dim}\, V_\mu(\lambda) = 1$.  Of course this is well-known, but we wish to see it as a consequence of the alcove walk model in order to extract some useful information from the proof.  Since the multiplicities are invariant under $W_0$-conjugation, we only need to consider the case $\lambda = \mu$. In this case $t_{-\mu} = (t_{-\mu})_{\bf 0}$,  and there is a unique ${\bf a}_U$-positively folded alcove walk ${\bf a}_\bullet$ from ${\bf a}$ to $-\mu + {\bf a}$ of type $\dot{t}_{-\mu}$ (the unique one given by the reduced word expression itself) and since it consists only of positive crossings, its dimension is $\ell(t_{-\mu}) = \langle \rho, \mu + \mu \rangle$; further no other $w(\mu) \neq \mu$ can contribute an alcove walk terminating at $-\mu + {\bf a}$ because $\ell((t_{-w(\mu)})_{\bf 0}) < \ell(t_{-\mu})$ by Lemma \ref{type_elements}. 

Now we extract a consequence for any $\lambda \in W\mu$. Write $\lambda = w(\mu)$ for $w \in W^\mu_0$. By the invariance mentioned above we know there is a unique ${\bf a}_U$-positively folded alcove walk ${\bf a}_\bullet$ from ${\bf a}$ to $-w(\mu) + {\bf 0}$ which has dimension $\langle \rho, \mu + w(\mu)\rangle$ and type of the form $\dot{(t_{-w_1(\mu)})_{\bf 0}}$, for some $w_1 \in W_0$. We claim, and will prove by induction on $\ell(w)$, that ${\bf a}_\bullet$ is the one coming from $\dot{(t_{-w(\mu)})_{\bf 0}}$ itself.  We proved this for $w = 1$ above, so assume $w(\mu) \neq \mu$. If ${\bf a}_\bullet$ is not as claimed, then it comes from some $\dot{(t_{-w_1(\mu)})_{\bf 0}}$ with $w_1 \in W^\mu_0$ ($w_1 \neq w$); note that $(t_{-w(\mu)})_{\bf 0} \leq  (t_{-w_1(\mu)})_{\bf 0}$ and hence  $w_1 < w$ by Lemma \ref{type_elements} and \cite[Lem.\,7.5]{HN02}.  Then ${\bf a}_\bullet$ results from the alcove walk from  $\dot{(t_{-w_1(\mu)})_{\bf 0}}$ by successively replacing (in order of the alcove walk) certain negative crossings with positive foldings. Each such replacement {\em can only happen at a negative crossing for a hyperplane give by a finite root}: otherwise, the vertex $-w_1(\mu)$ would get folded to an end-vertex which is strictly inside the convex hull of $W_0(-\mu)$, and therefore not equal to $-w(\mu)$.  On the other hand, we may choose the reduced word $\dot{(t_{-w_1(\mu)})_{\bf 0}}$ to be a product of form $\dot{v}\dot{u}\tau$, where $\tau \in \Omega$ is such that $t_{-\mu} \in W_{\rm aff} \tau$, as follows. Let $w_* \in W_0$ be the unique element such that $-\mu +{\bf a} \subset w_*(\mathcal C)$.  Let $\dot{v}$ be a reduced word for $v := w_1w_*$, and $\dot{u}$ a reduced word for  $u := v^{-1}(t_{-w_1(\mu)})_{\bf 0} = w^{-1}_* t_{-\mu}$ (use Lemma \ref{type_elements}). Note that the alcove walk from ${\bf a}$ to $w_*^{-1}t_{-\mu}{\bf a}$ corresponding to $\dot{u}$ lies entirely inside $\mathcal C$. It follows that the alcove walk from $w_1w_*{\bf a}$ to $w_1w_* u {\bf a} = (t_{-w_1(\mu)})_{\bf 0}{\bf a}$ corresponding to $\dot{u}$ lies entirely inside the Weyl chamber $w_1w_*(\mathcal C)$.  Therefore the alcove walk for $\dot{v}\dot{u}\tau$ from ${\bf a}$ to $(t_{-w_1(\mu)})_{\bf 0}{\bf a}$ has no negative crossings through any finite root hyperplane. If any of its actual negative crossings are replaced by foldings, the end-vertex could again not be $-w(\mu)$, by the convex hull reasoning above. We get a contradiction, hence the claim is proved. Thus the only possible alcove walk of the correct dimension is the one from $\dot{(t_{-w(\mu)})_{\bf 0}}$. In fact this also proves the corollary below.

Now we use what we have learned to handle the general case. We assume $\lambda \in W\mu \cap X_*(T)^{+_M}$. We write $\lambda = w(\mu)$ for some $w \in W^\mu_0$. We proved above the (known) result that ${\rm dim}\,V^{\widehat{G}}_\mu(\lambda) = 1$.  So $[V^{\widehat{G}}_\mu : V^{\widehat{M}}_\lambda] \leq 1$, and it suffices to find an ${\bf a}_{I_P}$-positively folded alcove from ${\bf a}$ to $-\lambda + {\bf 0}$ which has dimension $\langle \rho, \mu + \lambda \rangle$.  The alcove walk coming from $\dot{(t_{-\lambda})_{\bf 0}}$ has the required dimension as we saw above. By construction it is ${\bf a}_U$-positively folded and also ${\bf a}_{I_P}$-positively folded. This completes the proof.
\end{proof}

\begin{cor} [of proof] \label{separation_cor}
For each $w$, the alcove walk from ${\bf a}$ to $(t_{-w(\mu)})_{\bf 0}{\bf a}$ based on any reduced word expression $\dot{(t_{-w(\mu)})_{\bf 0}}$ has dimension $\langle \rho, \mu + w(\mu) \rangle$, and moreover this dimension is the number $N_{\mu, w}$ of affine hyperplanes separating the alcove $(t_{-w(\mu)})_{\bf 0} {\bf a}$ from the Weyl chamber $\mathcal C$.
\end{cor}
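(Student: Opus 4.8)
The statement has two parts: first, that every reduced-word alcove walk from $\mathbf a$ to $(t_{-w(\mu)})_{\mathbf 0}\mathbf a$ has dimension $\langle \rho, \mu + w(\mu)\rangle$; second, that this number equals $N_{\mu,w}$, the number of affine hyperplanes weakly separating the alcove $(t_{-w(\mu)})_{\mathbf 0}\mathbf a$ from the chamber $\mathcal C$. My plan is to treat these in turn, extracting both from the geometry already set up in the proof of Proposition \ref{PRV_prop}. For the first part, I would recall that the alcove walk attached to a \emph{reduced} word $\dot x$ consists entirely of crossings (no foldings), so $f^+(\mathbf a_\bullet) = 0$ and $\mathrm{dim}(\mathbf a_\bullet) = c^+(\mathbf a_\bullet)$, the number of $\mathbf a_{I_P}$-positive crossings. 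Then I would invoke the decomposition of $\dot{(t_{-w(\mu)})_{\mathbf 0}}$ as a product $\dot v\dot u\tau$ constructed in the proof above, with $v = w\, w_*$ and $u = w_*^{-1} t_{-\mu}$, where $w_* \in W_0$ is the element with $-\mu + \mathbf a \subset w_*(\mathcal C)$: the $\dot u$-portion of the walk stays inside a single Weyl chamber and hence its crossings are all positive (contributing $\ell(u) = \ell(t_{-\mu}) - \ell(w_*)$), while the $\dot v$-portion contributes $\ell(v) = \ell(w\, w_*)$ negative crossings. Since the walk is non-stuttering, the total number of steps is $\ell((t_{-w(\mu)})_{\mathbf 0}) = \ell(t_{-\mu}) - \ell(w)$ by Lemma \ref{type_elements}, so $c^+(\mathbf a_\bullet) = \ell(t_{-\mu}) - \ell(w) - \ell(w\,w_*) = \langle \rho, 2\mu\rangle - \ell(w) - \ell(w\,w_*)$; the claim $c^+ = \langle\rho, \mu + w(\mu)\rangle$ then reduces to the identity $\langle \rho, \mu - w(\mu)\rangle = \ell(w) + \ell(w\,w_*) - \langle\rho,2\mu\rangle$, i.e. $\ell(w\,w_*) = \langle \rho, \mu + w(\mu)\rangle - \ell(w) $... which I would verify directly from $\ell(w_*) = \langle\rho, \mu - w_*^{-1}\cdot\text{(dominant rep)}\rangle$-type length formulas, or more cleanly by noting the result is independent of the reduced word (all reduced words give walks of the same step-count and, by a standard argument, the same count of positive versus negative crossings since positivity/negativity of a crossing depends only on which side of the hyperplane $\mathbf a$ lies, hence only on the element being crossed to).

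For the independence-of-reduced-word point, which underlies ``any reduced word expression,'' the cleanest argument is: a crossing $\mathbf a_{i-1}\xrightarrow{s_i}\mathbf a_i$ in a reduced-word walk is $\mathbf b$-positive precisely when the affine hyperplane $w_{i-1}H_{s_i}$ separates $\mathbf b$ from $\mathbf a_{i-1}$ on the opposite side from $\mathbf a_i$; as one runs along \emph{any} reduced walk from $\mathbf a$ to $x\mathbf a$, the set of hyperplanes crossed is exactly the inversion set of $x$ (the $\ell(x)$ hyperplanes separating $\mathbf a$ from $x\mathbf a$), independent of the chosen reduced expression, and each such hyperplane is crossed exactly once. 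Whether that single crossing is $\mathbf b$-positive or $\mathbf b$-negative depends only on whether $\mathbf b$ ends up on the same side as $\mathbf a$ or the same side as $x\mathbf a$ — and for $\mathbf b = \mathbf a_U$ deep in $\mathcal C$ this is determined by which side $\mathcal C$ lies on. So $c^+(\mathbf a_\bullet)$ equals the number of hyperplanes in the inversion set of $(t_{-w(\mu)})_{\mathbf 0}$ which separate $\mathbf a$ from $(t_{-w(\mu)})_{\mathbf 0}\mathbf a$ with $\mathcal C$ on the far side from $\mathbf a$ — equivalently, which do \emph{not} strictly separate $(t_{-w(\mu)})_{\mathbf 0}\mathbf a$ from $\mathcal C$. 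This simultaneously gives word-independence and sets up the second part.

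For the second part I would make the bookkeeping precise. The hyperplanes weakly separating the alcove $(t_{-w(\mu)})_{\mathbf 0}\mathbf a$ from $\mathcal C$ split into: those separating it from $\mathbf a$ (these are the $\ell((t_{-w(\mu)})_{\mathbf 0})$ hyperplanes in the inversion set), minus those which \emph{strictly} separate $(t_{-w(\mu)})_{\mathbf 0}\mathbf a$ from $\mathcal C$; plus those separating $\mathbf a$ from $\mathcal C$ that also separate $(t_{-w(\mu)})_{\mathbf 0}\mathbf a$ from $\mathcal C$ — but since $\mathbf a \subset \mathcal C$, no hyperplane separates $\mathbf a$ from $\mathcal C$, so this last contribution is empty. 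Thus $N_{\mu,w}$ equals the number of inversion hyperplanes of $(t_{-w(\mu)})_{\mathbf 0}$ that do \emph{not} strictly separate $(t_{-w(\mu)})_{\mathbf 0}\mathbf a$ from $\mathcal C$, which is exactly the count of $\mathbf a_U$-positive crossings identified above, hence $= \mathrm{dim}(\mathbf a_\bullet) = \langle\rho,\mu+w(\mu)\rangle$. The main obstacle I anticipate is getting the ``weakly'' versus ``strictly'' conventions and the orientation of $\mathbf a_U$ exactly right so that ``positive crossing'' matches ``hyperplane not strictly separating the endpoint alcove from $\mathcal C$'' — this is a low-dimensional-picture check but easy to get backwards; once the dictionary ``$\mathbf a_U$-positive crossing $\leftrightarrow$ hyperplane with $\mathcal C$ weakly on the $\mathbf a$-side'' is pinned down, everything is a short combinatorial count using only Lemma \ref{type_elements} and the inversion-set description of reduced walks.
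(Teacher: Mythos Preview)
Your inversion-set idea in the second paragraph is the right one and is essentially how the paper handles the hyperplane identification, but you have the orientation reversed at the crucial step, and this propagates through the rest of the argument. By the definition in the paper, the crossing $\mathbf a_{i-1} \to \mathbf a_i$ is $\mathbf a_U$-\emph{positive} when $\mathbf a_U$ lies on the same side of the wall as $\mathbf a_{i-1}$. In a reduced walk starting at $\mathbf a$, the alcove $\mathbf a_{i-1}$ is always on the $\mathbf a$-side of the wall being crossed, so a crossing is positive exactly when $\mathbf a_U$ (hence, for $\mathbf a_U$ deep enough, all of $\mathcal C$) lies on the $\mathbf a$-side. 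That means the hyperplane \emph{does} separate the endpoint alcove $(t_{-w(\mu)})_{\mathbf 0}\mathbf a$ from $\mathcal C$, not the opposite. Once this is flipped, your third paragraph becomes one line: $c^+(\mathbf a_\bullet)$ counts inversion hyperplanes with $\mathcal C$ on the $\mathbf a$-side, which is precisely the set of hyperplanes separating $(t_{-w(\mu)})_{\mathbf 0}\mathbf a$ from $\mathcal C$ (any such hyperplane is automatically an inversion hyperplane since $\mathbf a \subset \mathcal C$), so $c^+(\mathbf a_\bullet) = N_{\mu,w}$. The same reversal invalidates your $\dot v\dot u\tau$ computation: the $\dot v$-portion crosses only finite-root hyperplanes through the origin, and for those $\mathbf a_U$ and $\mathbf a$ lie on the same side, so every such crossing is \emph{positive}, not negative as you wrote (this is exactly what the PRV proof records when it says the walk ``has no negative crossings through any finite root hyperplane'').

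Your corrected approach is still genuinely different from the paper's. The paper does not compute $c^+$ directly at all: it extracts from the proof of Proposition~\ref{PRV_prop} that the reduced-word walk is the \emph{unique} element of $\mathcal M^{\mathbf a_U}_\mu(w(\mu))$ (using $\dim V_\mu(w(\mu)) = 1$ together with Theorem~\ref{main_thm_B}(2)), which forces its dimension to be the upper bound $\langle\rho, \mu + w(\mu)\rangle$; only then does it identify $c^+$ with the hyperplane count $N_{\mu,w}$. Your route bypasses the PRV argument and the weight-multiplicity input, but in exchange you still owe a direct combinatorial verification that $N_{\mu,w} = \langle\rho, \mu + w(\mu)\rangle$, which your proposal does not supply. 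That count is elementary (sum over $\alpha \in \Phi^+$ of the number of $n \geq 0$ with $H_{\alpha+n}$ separating $(t_{-w(\mu)})_{\mathbf 0}\mathbf a$ from $\mathcal C$), but it is a real step you would need to add.
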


Our convention is that a hyperplane that {\em separates} these two subsets is allowed to intersect the closure of one or both of them.

\begin{proof}
Let ${\bf a}_\bullet$ be the alcove walk coming from $\dot{(t_{-w(\mu)})_{\bf 0}}$; since the word is reduced we see that $f^+({\bf a}_\bullet) = 0$. We have 
$$
c^+({\bf a}_{\bullet}) = {\rm dim}\,{\bf a}_{\bullet} = \langle \rho, \mu + w(\mu) \rangle.
$$
In this case the dimension is the number of hyperplanes separating $(t_{-w(\mu)})_{\bf 0} {\bf a}$ from ${\bf a}$ and also from ${\bf a}_{U}$, or equivalently, separating it from the entire Weyl chamber $\mathcal C$.
\end{proof}

\subsection{Example: the case $M = G$}

In this case $K_P = K$ and by the Cartan decomposition the variety $K_P t^\lambda K/K \, \cap \, Kt^\mu K/K/$ is nonempty (and then irreducible) if and only if $\lambda = \mu$.  To see this from the point of view of alcove walks, note that since ${\bf a}_{I_P} = {\bf a}$, the only ${\bf a}$-positively folded alcove walk of type $(t_{-w(\mu)})_{\bf 0}$ is the walk underlying the word we start with.  Assuming the intersection is nonempty, we have some $w$ and some $w''$ with $w''(\lambda) = w(\mu)$, from which it follows that $\lambda = \mu$ and $w=w'' \in W_0/W_{0,-\mu}$. Now $\ell((t_{-w(\mu)})_{\bf 0})- \langle \rho, \mu + \lambda \rangle$ is negative for all $w$ with $w(\mu) \neq \mu$ and this difference is $0$ when $w(\mu) = \mu$.  There is thus exactly one alcove walk ${\bf a}_\bullet$ which contributes, and it has $c^-({\bf a}_\bullet) = 0$.

\subsection{Type $A_2$ example} \label{A2_example}

We consider $G = {\rm GL}_3$. Let $P=B =TU \supset T$ be the standard Borel pair, where $T$ is the diagonal torus and $B$ (resp.\,$U$) the upper (resp.\,strictly upper) triangular invertible matrices. Let $I$ denote the usual ``upper triangular" Iwahori subgroup. Consider the dominant cocharacter $\mu = (3,1,0) \in \mathbb Z^3 \cong X_*(T)$.   We consider a $W_0$-orbit $\{\lambda_1, \lambda_2, \lambda_3\}$ of translations in the $T$-weight space for $\mu$, where we set
\begin{equation}
\lambda_1 = (1,1,2) \hspace{.3in} \lambda_2 = (2,1,1) \hspace{.3in} \lambda_3 = (1,2,1).
\end{equation}

The base alcove ${\bf a}$ gives rise to the simple affine reflections $s_1 = s_{\alpha_1}$, $s_2 = s_{\alpha_2}$, and $s_0 = t_{\tilde{\alpha}^\vee} s_{\tilde{\alpha}}$, giving the Coxeter system structure to the affine Weyl group $W_{\rm aff} = \langle s_0, s_1, s_2\rangle$.  Here $\alpha_1 = \epsilon_1 - \epsilon_2$, $\alpha_2 = \epsilon_2 - \epsilon_3$, and $\tilde{\alpha} = \epsilon_1 - \epsilon_3$. Recall the decomposition of the extended affine Weyl group $W = W_{\rm aff} \rtimes \Omega$; let $\tau \in \Omega$ be the unique element such that $t_{-\mu} \in W_{\rm aff} \rtimes \tau$. Let $w_0 = s_1s_2s_1 = s_2s_1s_2$ denote the longest element of the finite Weyl group $W_0 = \langle s_1, s_2 \rangle \cong S_3$.

\begin{figure}
\begin{center}
 \resizebox{4.in}{!}
{
\begin{overpic}{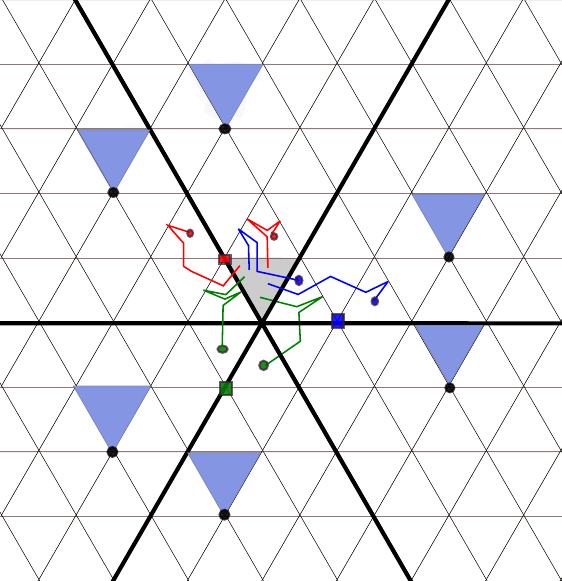}
\put(23.25,62.5){\tiny \cm $1012$}
\put(31.25,50.5){\tiny \cp $10$}
\put(56.25,49.5){\tiny \cp $20$}
\put(68.25,50.5){\tiny \cm $2012$}
\put(56.25,37.5){\tiny \cp $210$}
\put(67.65,36.5){\tiny \cm $21012$}
\put(62,40){\tiny \cp $2101$}
\put(62.85,45.5){\tiny \cp $201$}
\put(30.6,37.5){\tiny \cp $120$}
\put(39.25,40.5){\tiny \cp $12$}
\put(48.75,40.5){\tiny \cp $21$}
\put(54,46){\tiny \cp $2$}
\put(29.75,29.5){\tiny \cp $1201$}
\put(43.25,34.15){\tiny \cp $212$}
\put(42.75,29.5){\tiny \cp $1210$}
\put(22.75,25){\tiny \cm $12012$}
\put(35.5,24){\tiny \cp $12101$}
\put(34.70,18.55){\tiny \cm $121012$}
\put(34.75,7.5){${\bf -\mu}$}
\put(32.75,56.5){\tiny \cp $101$}
\put(35.25,46){\tiny \cp $1$}
\put(37,70){\tiny \cm $012$}
\put(37.75,62.5){\tiny \cp $01$}
\put(50.5,62.5){\tiny \cp $02$}
\put(44.75,62.5){\tiny \cp $0$}
\end{overpic}
}
\caption{The base alcove ${\bf a}$ is colored in light gray, and the bolded vertices are the elements in $W_0(-\mu)$, with corresponding translation alcoves $-w(\mu) + {\bf a}$ colored in light blue. Each alcove $(t_{-w(\mu)})_{\bf 0}({\bf a})$ is labelled in magenta with the $W_{\rm aff}$-part of a chosen reduced word expression (e.g.,\,we write $12012$ in place of $s_{12012}\tau$).  Other relevant alcoves have their reduced words colored in light pink. The three vertices $-\lambda_i + {\bf 0}$ are colored: $-\lambda_1 = \mbox{\cred red square}$, $-\lambda_2 = \mbox{\cg green square}$, and $-\lambda_3 = \mbox{\cb blue square}$. The alcove walks of maximum possible dimension which terminate at $-\lambda_i + {\bf 0}$ are given the same color as that vertex.}{\label{fig:fig1}}
\end{center}
\end{figure}

There are six elements in the Weyl-orbit $W_0(-\mu)$, and below we choose one reduced  expression for each of the right $W_0$-minimal elements $(t_{-w(\mu)})_{\bf 0}$.  We abbreviate by writing $s_0 s_1 s_2 \tau$ simply as $s_{012}\tau$, etc.\,:
\begin{align*}
t_{-\mu} =(t_{-\mu})_{\bf 0} &= s_{121012} \tau\\
(t_{-s_2(\mu)})_{\bf 0} &= s_{12012} \tau \\
(t_{-s_1(\mu)})_{\bf 0} &= s_{21012} \tau \\
(t_{-s_{12}(\mu)})_{\bf 0} &= s_{2012}\tau \\
(t_{-s_{21}(\mu)})_{\bf 0} &= s_{1012}\tau = s_{0102} \tau \\
(t_{-w_0(\mu)})_{\bf 0} &= s_{012} \tau.
\end{align*}
Let ${\bf a}_U$ be an alcove sufficiently deep inside the dominant Weyl chamber.  Fix $\lambda_i$. We may list all the {\em maximum-possible-dimensional} ${\bf a}_U$-positively folded alcove walks starting from ${\bf a}$ and terminating at the vertex $-\lambda_i + {\bf 0}$, having type given by one of the six reduced words listed above.  It turns out that for each $\lambda_i$, only two types give rise to a maximum-possible-dimensional alcove walk terminating at $-\lambda_i + {\bf 0}$. We list all such alcove walks which arise, as well as the open cellular subvariety in the corresponding irreducible component of $LU t^{\lambda_i} K/K \cap K t^\mu K/K$:

\noindent $-\lambda_1$:

\noindent Type $s_{012}\tau$: \,\,$\tau \overset{c^-_{s_0}}{\longrightarrow} s_0\tau \overset{f^+_{s_1}}{\longrightarrow} s_0\tau \overset{f^+_{s_2}}{\longrightarrow} s_0\tau \hspace{.85in} \rightsquigarrow \hspace{.85in} (\mathbb A^1_k)^0 \times (\mathbb A^1_k - \mathbb A^0_k)^2$.

\noindent Type $s_{1012}\tau$: \,\,$\tau \overset{c^+_{s_1}}{\longrightarrow} s_1\tau \overset{c^-_{s_0}}{\longrightarrow} s_{10}\tau \overset{c^-_{s_1}}{\longrightarrow} s_{101}\tau \overset{f^+_{s_2}}{\longrightarrow} s_{101}\tau \hspace{.43in} \rightsquigarrow \hspace{.43in}  (\mathbb A^1_k)^1 \times (\mathbb A^1_k - \mathbb A^0_k)^1$.

\bigskip

\noindent $-\lambda_2$:

\noindent Type $s_{2012}\tau$: \,\,$\tau \overset{c^+_{s_2}}{\longrightarrow} s_2\tau \overset{f^+_{s_0}}{\longrightarrow} s_2\tau \overset{c^+_{s_1}}{\longrightarrow} s_{21}\tau \overset{c^+_{s_2}}{\rightarrow} s_{212}\tau \hspace{.5in} \rightsquigarrow \hspace{.5in} (\mathbb A^1_k)^3 \times (\mathbb A^1_k - \mathbb A^0_k)^1$.

\noindent Type $s_{1012}\tau$: \,\,$\tau \overset{c^+_{s_1}}{\longrightarrow} s_1\tau \overset{f^+_{s_0}}{\longrightarrow} s_{1}\tau \overset{f^+_{s_1}}{\longrightarrow} s_{1}\tau \overset{c^+_{s_2}}{\longrightarrow} s_{12}\tau \hspace{.52in} \rightsquigarrow \hspace{.52in}  (\mathbb A^1_k)^2 \times (\mathbb A^1_k - \mathbb A^0_k)^2$.

\bigskip

\noindent $-\lambda_3$:

\noindent Type $s_{2012}\tau$: \,\,$\tau \overset{c^+_{s_2}}{\longrightarrow} s_2\tau \overset{c^-_{s_0}}{\longrightarrow} s_{20}\tau \overset{c^+_{s_1}}{\longrightarrow} s_{201}\tau \overset{f^+_{s_2}}{\rightarrow} s_{201}\tau \hspace{.45in} \rightsquigarrow \hspace{.45in} (\mathbb A^1_k)^2 \times (\mathbb A^1_k - \mathbb A^0_k)^1$.

\noindent Type $s_{0102}\tau$: \,\,$\tau \overset{c^-_{s_0}}{\longrightarrow} s_0\tau \overset{f^+_{s_1}}{\longrightarrow} s_{0}\tau \overset{c^+_{s_0}}{\longrightarrow} \tau \overset{c^+_{s_2}}{\longrightarrow} s_{2}\tau \hspace{.62in} \rightsquigarrow \hspace{.62in}  (\mathbb A^1_k)^2 \times (\mathbb A^1_k - \mathbb A^0_k)^1$.

\medskip

Note that in the last line, we used the reduced word $s_{0102}\tau$ instead of the reduced word $s_{1012}\tau$ appearing in the previous cases.  This is to make the picture in Figure \ref{fig:fig1} more legible. There are a few alcove walks in $\calP^{{\bf a}_U}_\mu(\lambda)$ which do not have maximum possible dimension.  We do not list these, but it is easy to do so.

The above is consistent with the known multiplicities ${\rm dim}\,V^{{\rm GL}_3}_\mu(\lambda_i) = 2$ for $i =1,2,3$.

\medskip

\section{Variant: Alcove walk model for tensor product multiplicities} \label{variant_sec}

Fix three elements $\lambda, \mu, \nu \in X_*(T)^+$ such that $\mu + \lambda - \nu$ belongs to the coroot lattice. Let $\mu^* = -w_0(\mu)$. Then the intersection
$$
{\rm Conv}(\lambda, \mu; \nu) := K t^\lambda K/K \, \cap \, t^\nu K t^{\mu^*} K/K
$$
can be identified with the fiber of a convolution morphism, as in \cite[Lem.\,7.3]{Hai25}. We shall give a description of this space in terms of alcove walks. It is well-known that $${\rm dim}\,{\rm Conv}(\lambda, \mu; \nu) \leq \langle \rho, \mu + \lambda - \nu \rangle$$ and that
$$
\#{\rm Irred}^{ \langle \rho, \mu + \lambda - \nu \rangle}({\rm Conv}(\lambda, \mu; \nu)) = [V^{\widehat{G}}_\mu \otimes V^{\widehat{G}}_\lambda \, : \, V^{\widehat{G}}_\nu],
$$
the multiplicity of $V^{\widehat{G}}_\nu$ in the representation $V^{\widehat{G}}_\mu \otimes V^{\widehat{G}}_\lambda$. This is a well-known consequence of the geometric Satake equivalence \cite{MV07}, see e.g.\,\cite{Hai03, BaRi18}. 

Using \cite[Lem.\,7.2]{Hai25} as in the proof of Lemma \ref{key_P-MV_lem}, we get a paving
\begin{equation} \label{Conv_I_decomp_eq}
{\rm Conv}(\lambda, \mu,; \nu) \cong \bigsqcup_{w \in W_0/W_{0,-\mu^*}} \bigsqcup_{w' \in W_0} \bigsqcup_{w'' \in W_0/W_{0, -\lambda}}  I w'' t_{-\lambda}w' I/I \, \cap \, t_{-\nu} I (t_{-w(\mu^*)})_{\bf 0} I/I.
\end{equation}

Now the proof of the main theorem in \cite{Hai25} yields the following alcove-walk description of the convolution fiber.

\begin{thm} \label{Conv_AW_model}
The reduced convolution fiber ${\rm Conv}(\lambda, \mu; \nu)$ has a paving by locally closed subvarieties of the form
$$
{\rm Conv}(\lambda, \mu; \nu) \cong ~~\bigsqcup_{{\bf a}_\bullet} ~~\mathbb A^{c^+({\bf a}_\bullet)}_k \times (\mathbb A^1_k - \mathbb A^0_k)^{f^+({\bf a}_\bullet)},
$$
where each ${\bf a}_\bullet$ belongs, for some $w'', w$ as above, to  the set $\calP^{\bf a}_{(t_{-w(\mu^*)})_0}(-\nu + {\bf a}, -w''(\lambda) + {\bf 0} )$ of ${\bf a}$-positively folded labeled alcove walks of type $(t_{-w(\mu^*)})_0$ from the alcove $t_{-\nu} {\bf a}$ to one of the vertices $-w''(\lambda) + {\bf 0}$.  
\end{thm}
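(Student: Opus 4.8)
The plan is to run the argument that produced Theorem \ref{Thm_A_body}, but starting from the $I$-orbit decomposition (\ref{Conv_I_decomp_eq}) in place of (\ref{refined_union_I}). Thus it suffices to treat a single term
\[
I w'' t_{-\lambda} w' I/I \,\cap\, t_{-\nu} I (t_{-w(\mu^*)})_{\bf 0} I/I
\]
and then reassemble over $w \in W_0/W_{0,-\mu^*}$, $w' \in W_0$, $w'' \in W_0/W_{0,-\lambda}$. Everything here is formally as in Lemma \ref{base_case_decomp}; the one genuinely new feature is the translation $t_{-\nu}$ in front of the second factor, and the plan is to absorb it by shifting the \emph{starting} alcove of a Demazure gallery space from ${\bf a}$ to $t_{-\nu}{\bf a}$.

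Concretely, I would fix a reduced expression $\dot x$ for $x := (t_{-w(\mu^*)})_{\bf 0}$ and use the identification of $IxI/I$ with the uncompactified Demazure variety of $\dot x$ (\cite[$\S$7.1]{Hai24}); left multiplication by $t_{-\nu}$ then identifies $t_{-\nu}IxI/I$ with the space of galleries of type $\dot x$ in $\mathfrak B(G,F)$ issuing from the alcove $t_{-\nu}{\bf a} = -\nu + {\bf a}$ rather than from ${\bf a}$. Since $I w'' t_{-\lambda} w' I/I$ is a genuine $I$-orbit it equals $\rho_{{\bf a},\mathcal A}^{-1}(w'' t_{-\lambda} w' {\bf a})$, the fibre of the retraction centered at the base alcove ${\bf a}$ (a standard building/BN-pair fact, the $P=G$ instance of Lemma \ref{I_P_vs_retraction}). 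Hence the intersection is the space of galleries $\gamma$ of type $\dot x$ from $t_{-\nu}{\bf a}$ whose terminal alcove retracts under $\rho_{{\bf a},\mathcal A}$ to $w''t_{-\lambda}w'{\bf a}$. Applying $\rho_{{\bf a},\mathcal A}$ to the whole gallery $\gamma$ (note $t_{-\nu}{\bf a} \subset \mathcal A_T$ is fixed by the retraction) yields an ${\bf a}$-positively folded alcove walk ${\bf a}_\bullet$ in $\mathcal A_T$ of type $\dot x$ from $t_{-\nu}{\bf a}$ to $w''t_{-\lambda}w'{\bf a}$, and the fibre of this folding map over a fixed such ${\bf a}_\bullet$ is $\mathbb A_k^{c^+({\bf a}_\bullet)} \times (\mathbb A^1_k - \mathbb A^0_k)^{f^+({\bf a}_\bullet)}$ by exactly the retraction-fibre analysis underlying Lemma \ref{base_case_decomp} (\cite[Prop.\,2.3.12, 2.9.1]{BT72}, \cite[$\S$6.1]{GHKR06}, or \cite[$\S$6.3]{Hai24}): each ${\bf a}$-positive crossing contributes a free $\mathbb A^1_k$, each folding a factor $\mathbb A^1_k - \mathbb A^0_k$, and each ${\bf a}$-negative crossing is forced. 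The shift of the starting alcove from ${\bf a}$ to $t_{-\nu}{\bf a}$ plays no role in this local computation, which is purely about the link of a wall.

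It then remains to reassemble. Using $w'' t_{-\lambda} w' = t_{-w''(\lambda)}(w''w')$, for fixed $w''$ the alcoves $w''t_{-\lambda}w'{\bf a}$ obtained as $w'$ runs over $W_0$ are precisely those in the star of the vertex $-w''(\lambda) + {\bf 0}$ (note $w''w'$ fixes the origin); so summing over $w'$ replaces ``terminating at the alcove $w''t_{-\lambda}w'{\bf a}$'' by ``terminating at the vertex $-w''(\lambda) + {\bf 0}$'', i.e.\ produces the set $\calP^{\bf a}_{(t_{-w(\mu^*)})_{\bf 0}}(-\nu + {\bf a}, -w''(\lambda) + {\bf 0})$. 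Summing the resulting pavings over $w \in W_0/W_{0,-\mu^*}$ and $w'' \in W_0/W_{0,-\lambda}$ and inserting them into (\ref{Conv_I_decomp_eq}) gives the asserted paving. I expect the only delicate point — and the place needing care in the writeup — to be this bookkeeping around $t_{-\nu}$: one must verify that the relevant retraction is $\rho_{{\bf a},\mathcal A}$ centered at the true base alcove (so that ``positively folded'' is with respect to ${\bf a}$, as in the statement) while $t_{-\nu}$ enters only through the starting alcove of the gallery space, and that the two invocations of \cite[Lem.\,7.2]{Hai24} used to obtain (\ref{Conv_I_decomp_eq}) go through exactly as in the proof of Lemma \ref{key_P-MV_lem}; apart from that, the argument is identical to the proof of Theorem \ref{Thm_A_body}.
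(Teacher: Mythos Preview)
Your proposal is correct and follows essentially the same approach as the paper: the paper's proof consists of the single sentence ``the proof of the main theorem in \cite{Hai24} yields the following alcove-walk description of the convolution fiber,'' applied to the decomposition (\ref{Conv_I_decomp_eq}), and what you have written is a faithful and accurate unpacking of that argument. In particular your identification of the key bookkeeping point---that the retraction governing the fold/crossing analysis is $\rho_{{\bf a},\mathcal A}$ (the $P=G$ case), while the translation $t_{-\nu}$ enters only as a shift of the initial alcove of the gallery---is exactly right.
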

Clearly it follows from this that for every ${\bf a}_\bullet$ appearing which is attached to $w'', w$, we have 
\begin{align*}
c^+({\bf a}_\bullet) + f^+({\bf a}_\bullet) &= {\rm dim}({\bf a}_\bullet) \leq \langle \rho, \mu + \lambda - \nu \rangle, ~~\mbox{or equivalently,} \\
c^-({\bf a}_\bullet) &\geq \ell((t_{-w(\mu^*)})_0) - \langle \rho, \mu + \lambda - \nu \rangle.
\end{align*}
So it makes sense to define 
$$
\mathcal M^{{\bf a}}_{\mu, \lambda}(\nu) := \coprod_{w'', w} \mathcal M^{{\bf a}}_{(t_{-w(\mu^*)})_0}(t_{-\nu}{\bf a}, -w''(\lambda) + {\bf 0}),
$$
where $\mathcal M^{{\bf a}}_{(t_{-w(\mu^*)})_0}(t_{-\nu}{\bf a}, -w''(\lambda) + {\bf 0})$ consists of those ${\bf a}_\bullet \in \calP^{\bf a}_{(t_{-w(\mu^*)})_0}(-\nu + {\bf a}, -w''(\lambda) + {\bf 0} )$ which satisfy
\begin{align*}
{\rm dim}({\bf a}_\bullet) &= \langle \rho, \mu  + \lambda - \nu \rangle, ~~\mbox{or equivalently,} \\
c^-({\bf a}_\bullet) &= \ell((t_{-w(\mu^*)})_0) - \langle \rho, \mu + \lambda - \nu \rangle.
\end{align*}
As before, if $\ell((t_{-w(\mu^*)})_0) - \langle \rho, \mu + \lambda - \nu \rangle < 0$, then $\calM^{\bf a}_{(t_{-w(\mu^*)})_0}(-\nu + {\bf a}, -w''(\lambda) + {\bf 0} ) = \emptyset$, and $-w(\mu^*)$ does not contribute any alcove walks to the set we are counting.

\begin{thm} \label{Tensor_AW_model} Let $\mu, \lambda, \nu \in X_*(T)^+$ be such that $\mu + \lambda - \nu$ belongs to the coroot lattice.  Then
\begin{enumerate}
\item[(1)] There is a bijection ${\bf a}_\bullet \mapsto C_{{\bf a}_\bullet}$ 
\begin{equation} \label{bij_B_1}
\mathcal M^{{\bf a}}_{\mu, \lambda}(\nu) ~~ \overset{\sim}{\longrightarrow} ~~ {\rm Irred}^{\langle \rho, \mu + \lambda - \nu \rangle}({\rm Conv}(\mu, \lambda; \nu)). \notag
\end{equation} 
\item[(2)] The multiplicity $[V^{\widehat{G}}_\mu \otimes V^{\widehat{G}}_\lambda \, : \, V^{\widehat{G}}_\nu]$ is the number of ${\bf a}$-positively folded alcove walks  ${\bf a}_\bullet$ of type $(t_{-w(\mu^*)})_{\bf 0}$ for some $w \in W_0/W_{0,-\mu^*}$,  joining $t_{-\nu}{\bf a}$ to $-w''(\lambda) + {\bf 0}$ for some $w'' \in W_0$, and such that ${\rm dim}({\bf a}_\bullet) = \langle \rho, \mu + \lambda - \nu \rangle$, or equivalently, the number of ${\bf a}$-negative crossings is the minimum possible, namely $c^-({\bf a}_\bullet) = \ell((t_{-w(\mu^*)})_{\bf 0}) -  \langle \rho, \mu + \lambda - \nu\rangle$.
\end{enumerate}
\end{thm}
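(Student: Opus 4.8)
\noindent\textbf{Plan of proof of Theorem \ref{Tensor_AW_model}.} The strategy is to repeat verbatim the argument that derives Theorem \ref{main_thm_P} from Theorem \ref{Thm_A_body}, now feeding in Theorem \ref{Conv_AW_model} in place of Theorem \ref{Thm_A_body}. Concretely, the ingredients I would assemble are: (a) the cellular paving of ${\rm Conv}(\lambda,\mu;\nu)$ produced by Theorem \ref{Conv_AW_model}, whose pieces $\mathbb A_k^{c^+({\bf a}_\bullet)}\times(\mathbb A^1_k-\mathbb A^0_k)^{f^+({\bf a}_\bullet)}$ are indexed by the labeled alcove walks ${\bf a}_\bullet\in\calP^{\bf a}_{(t_{-w(\mu^*)})_{\bf 0}}(-\nu+{\bf a},-w''(\lambda)+{\bf 0})$, running over $w\in W_0/W_{0,-\mu^*}$ and $w''\in W_0$; (b) the a priori bound ${\rm dim}\,{\rm Conv}(\lambda,\mu;\nu)\le d:=\langle\rho,\mu+\lambda-\nu\rangle$ together with the equality $\#{\rm Irred}^{d}({\rm Conv}(\lambda,\mu;\nu))=[V^{\widehat{G}}_\mu\otimes V^{\widehat{G}}_\lambda:V^{\widehat{G}}_\nu]$, both recorded above as consequences of geometric Satake; and (c) the length identity $r=c^+({\bf a}_\bullet)+c^-({\bf a}_\bullet)+f^+({\bf a}_\bullet)$ with $r=\ell((t_{-w(\mu^*)})_{\bf 0})$ for the chosen reduced type word, which gives the equivalence ${\rm dim}({\bf a}_\bullet)=d\iff c^-({\bf a}_\bullet)=\ell((t_{-w(\mu^*)})_{\bf 0})-d$ that is used to rephrase $\mathcal M^{\bf a}_{\mu,\lambda}(\nu)$.

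For part (1), I would first note that each cell $\mathbb A_k^{c^+({\bf a}_\bullet)}\times(\mathbb A^1_k-\mathbb A^0_k)^{f^+({\bf a}_\bullet)}$ is geometrically irreducible of dimension $c^+({\bf a}_\bullet)+f^+({\bf a}_\bullet)={\rm dim}({\bf a}_\bullet)\le d$, so ${\rm Conv}(\lambda,\mu;\nu)$ is covered by geometrically irreducible locally closed subschemes of dimension $\le d$. Then I would invoke the elementary fact that, in any such paving of a finite-type $k$-scheme, taking Zariski closures induces a bijection between the set of cells of dimension exactly $d$ and ${\rm Irred}^{d}$ of the total space: a $d$-dimensional cell $Z$ has $\overline{Z}$ irreducible of the maximal dimension $d$, hence an irreducible component; conversely the generic point of any $C\in{\rm Irred}^{d}$ lies in a unique cell, which is open (hence dense) in its closure $\subseteq C$, forcing that cell to have dimension $d$ and closure equal to $C$; and two disjoint locally closed pieces cannot both be dense in the same irreducible closed set, so distinct $d$-cells give distinct components. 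Applied to ${\rm Conv}(\lambda,\mu;\nu)$ this yields the bijection ${\bf a}_\bullet\mapsto C_{{\bf a}_\bullet}:=\overline{\mathbb A_k^{c^+({\bf a}_\bullet)}\times(\mathbb A^1_k-\mathbb A^0_k)^{f^+({\bf a}_\bullet)}}$, whose source is precisely $\mathcal M^{\bf a}_{\mu,\lambda}(\nu)$ by the definition of the latter and the equivalence in (c). Part (2) then follows by combining (1) with the geometric-Satake equality in (b) and unwinding $\mathcal M^{\bf a}_{\mu,\lambda}(\nu)=\coprod_{w'',w}\mathcal M^{\bf a}_{(t_{-w(\mu^*)})_{\bf 0}}(t_{-\nu}{\bf a},-w''(\lambda)+{\bf 0})$ into the stated enumeration; the coroot-lattice hypothesis enters only to guarantee the statement is non-vacuous (otherwise both sides vanish).

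The step requiring the most care is the passage from cells to irreducible components over an arbitrary field $k$. I would handle this by using that $\mathbb A^n_k$ and $\mathbb A^1_k-\mathbb A^0_k=\mathbb G_{m,k}$ are geometrically irreducible, so that the closures of cells are genuine irreducible components of ${\rm Conv}(\lambda,\mu;\nu)$ over $k$; alternatively, one can reduce the count $\#{\rm Irred}^{d}$ to an algebraically closed field of characteristic zero by the flat base-change argument used in the proof of Proposition \ref{K_P_upper_bd_prop}. Everything else is bookkeeping: the dictionary between the $I$-orbit description (\ref{Conv_I_decomp_eq}) and alcove walks is already incorporated into Theorem \ref{Conv_AW_model}, and the length/dimension translation is the same one appearing in the corollaries following Theorem \ref{Thm_A_body}. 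As a sanity check I would verify that for small data (e.g.\ $G={\rm SL}_2$, or ${\rm GL}_3$ with the cocharacters of $\S$\ref{A2_example}) the resulting counts of maximal alcove walks reproduce the expected Littlewood--Richardson numbers.
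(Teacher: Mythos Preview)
Your proposal is correct and follows essentially the same approach as the paper: the paper does not give a separate proof of Theorem \ref{Tensor_AW_model} but simply states it after recording Theorem \ref{Conv_AW_model}, the a priori dimension bound, and the geometric-Satake multiplicity formula, leaving the reader to assemble these exactly as you do (and exactly as was done for Theorem \ref{main_thm_P}). In fact your write-up is more explicit than the paper's, since you spell out the elementary argument matching top-dimensional cells with top-dimensional irreducible components and address geometric irreducibility over an arbitrary $k$, points the paper leaves implicit.
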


\section{Appendix} \label{appendix}

In this appendix, we give additional information about the quantities $\ell((t_{-w(\mu)})_{\bf 0}) - \langle \rho, \mu + \lambda \rangle$ and $\langle \rho, \mu + \lambda \rangle$.   

\begin{lem}  \label{App_Lem1} Let $\lambda_d$ be the unique dominant $W_0$-conjugate of $\lambda$. Write $\lambda = w''(\lambda_d)$ for $w'' \in W_0^{\lambda_d}$.
\begin{enumerate}
\item[(a)] We have the identity
$$
\langle \rho, \lambda_d - w''(\lambda_d) \rangle = \sum_{\underset{w''\alpha < 0}{\alpha > 0}} \langle \alpha, \lambda_d \rangle =: \ell^{\lambda_d}(w'').
$$
We consider $\ell^{\lambda_d}(w'')$ to be a $\lambda_d$-weighted version of the length of $w''$.
\item[(b)] We have $\ell^{\lambda_d}(w'') \geq \ell(w'')$.
\end{enumerate}
\end{lem}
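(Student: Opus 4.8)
The plan is to derive both parts from two classical facts about the finite Weyl group $W_0$: the identity $\rho - v^{-1}(\rho) = \sum_{\alpha\in\Phi^+,\,v\alpha\in\Phi^-}\alpha$, and the length formula $\ell(v) = \#\{\alpha\in\Phi^+ : v\alpha\in\Phi^-\}$. For part (a), I would first use the $W_0$-invariance of the pairing $\langle\cdot,\cdot\rangle$ to write $\langle\rho,\,\lambda_d - w''(\lambda_d)\rangle = \langle\rho,\lambda_d\rangle - \langle(w'')^{-1}(\rho),\lambda_d\rangle = \langle\rho - (w'')^{-1}(\rho),\,\lambda_d\rangle$. The element $\rho - (w'')^{-1}(\rho)$ lies in the root lattice (so the resulting pairing with $\lambda_d\in X_*(T)$ is an integer even though $\rho$ itself need not be integral), and by the first classical identity, proved by induction on $\ell(w'')$ from the base case $\langle\rho,\alpha_i^\vee\rangle = 1$, it equals $\sum_{\alpha>0,\,w''\alpha<0}\alpha$. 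Pairing with $\lambda_d$ then gives precisely $\ell^{\lambda_d}(w'')$. Note this step works for an arbitrary $w''\in W_0$; minimality is not needed for (a).

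For part (b), combining the length formula with the expression for $\ell^{\lambda_d}(w'')$ from (a), it suffices to show $\langle\alpha,\lambda_d\rangle\geq 1$ for every $\alpha\in\Phi^+$ with $w''\alpha\in\Phi^-$. Since $\lambda_d$ is dominant, $\langle\alpha,\lambda_d\rangle$ is a non-negative integer, so only the value $0$ must be excluded. Suppose $\langle\alpha,\lambda_d\rangle = 0$. Then $\alpha$ lies in the subsystem $\Phi_{\lambda_d} := \{\beta\in\Phi : \langle\beta,\lambda_d\rangle = 0\}$; because $\lambda_d$ is dominant, writing any $\beta = \sum_i c_i\alpha_i\in\Phi^+$ with $\langle\beta,\lambda_d\rangle = 0$ forces $\langle\alpha_i,\lambda_d\rangle = 0$ for each $i$ with $c_i > 0$, so $\Phi_{\lambda_d}$ coincides with the standard parabolic subsystem $\Phi_J$ for $J := \{i : \langle\alpha_i,\lambda_d\rangle = 0\}$, and $W_J = W_{0,\lambda_d}$. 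Since $w''$ is the minimal-length element of its coset $w''W_{0,\lambda_d}$, we have $\ell(w''s_j) > \ell(w'')$, hence $w''(\alpha_j)\in\Phi^+$, for all $j\in J$, and therefore $w''(\Phi_J^+)\subseteq\Phi^+$; as $\alpha\in\Phi_J^+$ this contradicts $w''\alpha\in\Phi^-$. Thus $\langle\alpha,\lambda_d\rangle\geq 1$, and summing over the $\ell(w'')$ roots $\alpha\in\Phi^+$ with $w''\alpha\in\Phi^-$ gives $\ell^{\lambda_d}(w'')\geq\ell(w'')$.

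The only genuine content beyond bookkeeping is the last step of (b): that minimal coset representatives for $W_0/W_{0,\lambda_d}$ carry $\Phi_J^+$ into $\Phi^+$, together with the identification $W_{0,\lambda_d} = W_J$ — and the latter genuinely uses that $\lambda_d$ is \emph{dominant}, not merely that it is a dominant conjugate of $\lambda$. Both facts are standard (the first via the characterization $w\in W^J\iff w(\Delta_J)\subseteq\Phi^+$ and a height induction on $\Phi_J^+$), so the real work is just to assemble them with consistent sign conventions; everything else is routine.
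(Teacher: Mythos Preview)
Your proof is correct and follows essentially the same approach as the paper. For part (a) both you and the paper reduce to the classical identity $\rho - (w'')^{-1}\rho = \sum_{\alpha>0,\,w''\alpha<0}\alpha$; for part (b) both argue that $\langle\alpha,\lambda_d\rangle=0$ with $\alpha>0$ would force $w''\alpha>0$ via minimality of $w''$ in its $W_{0,\lambda_d}$-coset. The only difference is cosmetic: the paper observes directly that $\langle\alpha,\lambda_d\rangle=0$ gives $s_\alpha\in W_{0,\lambda_d}$ and hence $w''<w''s_\alpha$, while you first identify $W_{0,\lambda_d}$ with the parabolic $W_J$ and then invoke $w''(\Phi_J^+)\subseteq\Phi^+$; both routes rely on the same underlying fact that $W_{0,\lambda_d}$ is a standard parabolic subgroup because $\lambda_d$ is dominant.
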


We note that the notion of $\lambda_d$-weighted length was independently discovered by Chapelier-Laget and Gerber (see \cite[Def.\,4.5]{C-LG22}, where it is termed the {\em $\lambda_d$-atomic length}). 

\begin{proof}
Part (a) follows from the relation
$$
\rho - (w'')^{-1}\rho = \sum_{\underset{w''\alpha < 0}{\alpha > 0}} \alpha.
$$
To see part (b) we need to show that $\langle \alpha, \lambda_d \rangle > 0$ whenever $\alpha> 0$ and $w''\alpha < 0$.  If on the contrary $\langle \alpha, \lambda_d \rangle = 0$, then $s_\alpha \in W_{\lambda_d}$ and so $w'' < w'' s_\alpha$, which is equivalent to $w''\alpha > 0$, a contradiction.
\end{proof}

For the next statement, we define $\Omega(\mu) = \{ \nu \in X_*(T) \, | \, w\nu \preceq \mu, \,\,\,\, \forall w \in W_0 \}$, where $\nu_1 \preceq \nu_2$ for $\nu_i \in X_*(T)$ means that $\nu_2 - \nu_1$ is a $\mathbb Z_{\geq 0}$-linear combination of positive coroots. For such a difference, define its {\em height} to be  ${\rm ht}(\nu_2 - \nu_1) := \langle \rho, \nu_2 - \nu_1 \rangle$, an element in $\mathbb Z_{\geq 0}$. Recall from Corollary \ref{separation_cor} that $N_{\lambda_d, w''}$ denotes the number of affine hyperplanes separating $(t_{-w''(\lambda_d)})_{\bf 0}{\bf a}$ from the Weyl chamber $\mathcal C$.

\begin{prop} \label{appendix_prop}
Let $\mu \in X_*(T)^+$ and assume $\lambda \in \Omega(\mu) \cap X_*(T)^{+_M}$. Write $\lambda = w''(\lambda_d)$ for $w'' \in W_0^{\lambda_d}$. Then
$$
\langle \rho, \mu + \lambda \rangle = \ell(t_{\mu}) -  {\rm ht}(\mu - \lambda_d)  - \ell^{\lambda_d}(w'') = {\rm ht}(\mu - \lambda_d) + N_{\lambda_d, w''}.
$$
Furthermore, 
$$
\ell((t_{-w(\mu)})_{\bf 0}) - \langle \rho, \mu + \lambda \rangle = {\rm ht}(\mu - \lambda_d) +  \ell^{\lambda_d}(w'') - \ell(w).
$$
\end{prop}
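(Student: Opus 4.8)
The plan is to observe that every quantity in the statement is ultimately a pairing against $\rho$, so the whole proposition reduces to linear algebra in $V$ together with three facts already available: the length formula $\ell(t_\mu) = \langle 2\rho, \mu\rangle$ (valid because $\mu \in X_*(T)^+$ and hence $\langle\alpha,\mu\rangle\geq 0$ for all $\alpha\in\Phi^+$), Lemma \ref{App_Lem1}(a) in the form $\ell^{\lambda_d}(w'') = \langle \rho, \lambda_d - w''(\lambda_d)\rangle = \langle \rho, \lambda_d - \lambda\rangle$, and Corollary \ref{separation_cor} applied to the pair $(\lambda_d, w'')$, which gives $N_{\lambda_d, w''} = \langle \rho, \lambda_d + w''(\lambda_d)\rangle = \langle \rho, \lambda_d + \lambda\rangle$. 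The hypothesis $\lambda \in \Omega(\mu)$ enters only to legitimize the symbol ${\rm ht}(\mu - \lambda_d) = \langle \rho, \mu - \lambda_d\rangle$, since it guarantees $\lambda_d \preceq \mu$; the hypothesis $\lambda\in X_*(T)^{+_M}$ is not needed for these identities.

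First I would establish the chain of equalities for $\langle \rho, \mu + \lambda\rangle$. Starting from the middle expression and substituting the two values just recalled, $\ell(t_\mu) - {\rm ht}(\mu - \lambda_d) - \ell^{\lambda_d}(w'') = \langle \rho,\; 2\mu - (\mu - \lambda_d) - (\lambda_d - \lambda)\rangle = \langle \rho, \mu + \lambda\rangle$; and for the rightmost expression, ${\rm ht}(\mu - \lambda_d) + N_{\lambda_d, w''} = \langle \rho,\; (\mu - \lambda_d) + (\lambda_d + \lambda)\rangle = \langle \rho, \mu + \lambda\rangle$. This is pure bookkeeping with $\rho$-pairings and needs no further input.

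For the ``furthermore'' clause, I would first replace $w$ by its minimal-length representative in $W_0^{\mu}$, which changes neither $w(\mu)$ nor $(t_{-w(\mu)})_{\bf 0}$, and then apply Lemma \ref{type_elements} together with $\ell(t_{-\mu}) = \ell(t_\mu^{-1}) = \ell(t_\mu)$ to get $\ell((t_{-w(\mu)})_{\bf 0}) = \ell(t_\mu) - \ell(w)$. Subtracting the first identity rewritten as $\langle \rho, \mu + \lambda\rangle = \ell(t_\mu) - {\rm ht}(\mu - \lambda_d) - \ell^{\lambda_d}(w'')$ then yields $\ell((t_{-w(\mu)})_{\bf 0}) - \langle \rho, \mu + \lambda\rangle = {\rm ht}(\mu - \lambda_d) + \ell^{\lambda_d}(w'') - \ell(w)$, as claimed. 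I do not expect a genuine obstacle here; the only point requiring care is to keep straight that $\ell^{\lambda_d}$ and $N_{\lambda_d, w''}$ are attached to the \emph{dominant} conjugate $\lambda_d$ and to the specific $w'' \in W_0^{\lambda_d}$ with $\lambda = w''(\lambda_d)$, which is exactly the hypothesis under which Lemma \ref{App_Lem1}(a) and Corollary \ref{separation_cor} were proved, so that they apply verbatim.
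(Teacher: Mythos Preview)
Your proof is correct and follows essentially the same approach as the paper, which simply says the result follows immediately from combining Lemma \ref{type_elements}, Corollary \ref{separation_cor}, and Lemma \ref{App_Lem1}. You have just made the bookkeeping explicit, and your care in noting that $w$ must be taken in $W_0^{\mu}$ before applying Lemma \ref{type_elements} is a useful clarification left implicit in the paper.
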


\begin{proof}
This follows immediately on combining Lemma \ref{type_elements}, Corollary \ref{separation_cor}, and Lemma \ref{App_Lem1}.
\end{proof}

\noindent {\em Statement on Accepted Version:} This version of the article has been accepted for publication, after peer review, but is
not the Version of Record and does not reflect [all] post-acceptance improvements, or [all] corrections. The Version of Record is
available online at: http://dx.doi.org/10.1007/s00031-026-09956-0.

\end{document}